\newcommand{\lf}{\lfloor}
\newcommand{\rf}{\rfloor}
\numberwithin{figure}{section}
\newtheorem{theorem}{Theorem}[section]
\newtheorem{lemma}{Lemma}[section]
\newtheorem{example}{Example}[section]
\newcommand{\vare}{\varepsilon}
\newcommand{\hht}{\hat{t}}
\newcommand{\bark}{\bar{\kappa}}
\newtheorem{assumption}{Assumption}[section]
\newcommand{\calS}{{\mathcal S}}
\newcommand{\calU}{{\mathcal U}}
\newcommand{\cH}{{\mathcal H}}
\newcommand{\calu}{{\frak u}}
\newtheorem{remark}{Remark}[section]
\def\beq{\begin{equation}}
\def\eeq{\end{equation}}
\def\bals{\begin{align*}}
\def\eals{\end{align*}}
\def\bal{\begin{align}}
\def\eal{\end{align}}
\numberwithin{equation}{section}
\numberwithin{theorem}{section}
\numberwithin{table}{section}
\numberwithin{remark}{section}
\begin{document}

\title[Estimation of the time of change  in panel data]{Estimation of the time of change  in panel data}

\author {Lajos Horv\'ath}
\address{Lajos Horv\'ath, Department of Mathematics, University of Utah, Salt Lake City, UT 84112--0090 USA }

\author{Marie Hu\v{s}kov\'a}
\address{Marie Hu\v{s}kov\'a, Department of Probability and Mathematical Statistics, Charles University, Sokolovsk\'a 83, CZ-18600 Praha, Czech Republic}
\author {Gregory Rice}
\address{Gregory Rice, Department of Mathematics, University of Utah, Salt Lake City, UT 84112--0090 USA }
\author{ Jia Wang}
\address{Jia Wang, Department of Mathematics, University of Utah, Salt Lake City, UT 84112--0090 USA}


\thanks{ Address correspondence to Lajos Horv\'ath,  Department of Mathematics, University of Utah, Salt Lake City, UT 84112--0090 USA; email: horvath@math.utah.edu.}
\begin{abstract}
We consider the problem of estimating the common time of a change in the mean parameters of panel data when dependence is allowed between the panels in the form of a common factor. A CUSUM type estimator is proposed, and we establish first and second order asymptotics that can be used to derive consistent confidence intervals for the time of change. Our results improve upon existing theory in two primary directions. Firstly, the conditions we impose on the model errors only pertain to the order of their long run moments, and hence our results hold for nearly all stationary time series models of interest, including nonlinear time series like the ARCH and GARCH processes. Secondly, we study how the asymptotic distribution and norming sequences of the estimator depend on the magnitude of the changes in each panel and the common factor loadings. The performance of our results in finite samples is demonstrated with a Monte Carlo simulation study, and we consider applications to two real data sets: the exchange rates of 23 currencies with respect to the US dollar, and the GDP per capita in 113 countries.

\end{abstract}

\maketitle

\section{INTRODUCTION }\label{sec-main}

In this paper, we consider the problem of estimating the time of a change in the mean present in panel data in which their are $N$ panels comprised of time series data of length $T$. A common structural break in panel data is a quite natural occurrence. For example, if each panel represents the exchange rate of a currency with respect to US dollars, then a crisis in the US would be expected to simultaneously affect each panel. Similar phenomena may be produced my governmental policy changes, the introduction of a new technology, etc., and in these cases it is of interest to estimate the time at which such occurrences are manifested in sample data. The theory of change point analysis has been extensively developed to study problems of this nature, see Cs\"{o}rg\H{o} and Horv\'ath (1997), Brodsky and Darkhovskii (2002), and Aue and Horv\'ath (2012) for reviews of the field.

Classical methods in change point analysis consider univariate and multivariate data of a fixed dimension. In many panel data examples, however, the number of panels $N$ is comparable in size to the length of the series $T$. In these cases asymptotics as $T$ remains fixed and $N$ tends to infinity, or as $N$ and $T$ jointly tend to infinity, are more appropriate. Although in principle one could detect the common change present in each panel by examining a single series, an analysis that utilizes all available panels should provide improved detection and estimation.

The literature on structural breaks in panel data has grown considerably in the last two decades. We refer to Arellano (2003), Hsiao (2003) and Baltagi (2013) for surveys of several panel data models and their applications to econometrics and finance. The early foundations for estimating structural changes in panel data were developed in Joseph and Wolfson (1992,1993), and many aspects of the problem have now seen at least some consideration; Li et al. (2014) and Qian and Su (2014) consider multiple structural breaks in panel data, and Kao et al. (2014) considers break testing under cointegration.

The works of Bai (2010), Kim (2011, 2014), and Horv\'ath and Hu\v{s}kov\'a (2012) are the most closely related to the present paper. Bai (2010) considers the problem of estimating a common break in the means of panel data that do not exhibit cross sectional dependence. A least squares estimator is proposed that is shown to be consistent when $N$ tends to infinity, and its asymptotic properties are derived as $N$ and $T$ jointly tend to infinity. Kim (2011) considers the least squares estimator of Bai (2010) with cross sectional dependence modeled by a common factor, and expands the test to detect a change in the slope of a linear trend in the mean component. Horv\'ath and Hu\v{s}kov\'a (2012) study testing for the presence of a change using a CUSUM estimator, and also assuming the presence of a common factor. In each of these papers, asymptotics are derived assuming the model errors are linear processes, and that the rates of divergence relative to $N$ and $T$ of the size of the changes and the magnitudes of the factor loadings are fixed.

In this paper, we expand the existing theory in two primary directions. We derive second order asymptotics for the CUSUM change point estimator assuming only an order condition on the long run moments. This extends the asymptotic theory of change point estimation to a wide variety of error processes, notably many nonlinear time series examples like the ARCH and GARCH processes. We also show explicitly how the asymptotic distribution and norming sequences of the estimator depend on the magnitude of the changes in each panel and the common factor loadings. This allows for the computation of the limit distribution under several conceivable rates of divergence for the magnitudes of changes and factor loadings.

The remainder of the paper is organized as follows. In Section \ref{sec-main-res}, we present our assumptions and the main results of the paper. Section \ref{sec-exa} contains examples of error processes that satisfy the assumptions of Section \ref{sec-main-res}. Estimators for the norming sequences that appear in the results of Section \ref{sec-main-res} are developed and studied in Section \ref{subest}. The implementation of the results of the paper as well as a Monte Carlo simulation study and data applications are detailed in Section \ref{sec-simu}. The proofs of all results are contained in Appendix \ref{proof}.


\section{Assumptions, and main results }\label{sec-main-res}
We consider the panel data model
\beq\label{model}
X_{i,t}=\mu_i+\delta_iI\{t> t_0\}+\gamma_i\eta_t+e_{i,t},\;\;1\leq i \leq N, 1\leq t \leq T,
\eeq
where the idiosyncratic errors $e_{i,t}'s$ have mean zero, $\eta_t$ denotes the common factor with loadings $\gamma_i$, $1\le i \le N$, and $\delta_i$ denotes the change in the mean of panel $i$ that occurs at the common, and unknown, change point $t_0$.

\begin{assumption}\label{b-1}\ \\
{\rm
(i) The sequences $\{ e_{i,t}, -\infty<t<\infty\}, 1 \leq i \leq N\;\; \mbox{are independent,}$ and\\

(ii) $\{\eta_t, -\infty<t<\infty\}\mbox{ and }\{e_{i,t}, -\infty<t<\infty\}\;\;1\le i \le N \mbox{ are stationary.}$
}
\end{assumption}


According to Assumption \ref{b-1}(i), the only source of dependence between the panels is the common factor $\eta_t$. The idiosyncratic errors form a stationary time series, similarly to the assumption in Bai (2010) and Kao et al.\ (2012, 2013). Throughout this paper $\delta_i$ and $\gamma_i$, $1\le i \le N$, are allowed to depend on $N$ and $T$. For the sake of simplicity, we consider the case when $\gamma_i\in R$, but our results could be extended to the more general case of a vector valued factor loading and common factor.


\begin{assumption}[]\label{t-0}
{\rm The time of change in the mean $t_0$ satisfies
$$
 t_0=\lfloor T\theta\rfloor\;\;\;\mbox{with some}\;\;\;0<\theta<1.
$$
}
\end{assumption}

Assumption \ref{t-0} is standard in change point analysis, and corresponds with the assumptions of Bai (2010), Kim (2011,2014), and Horv\'ath and Hu\v{s}kov\'a (2012). It is of interest in some econometric applications to allow for $\theta$ to depend on $N$ and $T$ and tend to the end points $0$ or $1$ at a certain rate; see Andrews (2003) and, in the panel data setting, Qian and Su (2014). The consideration of this problem in generality for our estimator is not a goal of the present paper, and requires a thorough study.

Our estimator for $t_0$ is defined as the location of the maximum of the sum of the CUSUM processes across the panels:

$$
\hat{t}_{N,T}=\mbox{argmax}_{1\leq t <T}
\sum_{i=1}^N\left(S_i(t)-\frac{t}{T}S_i(T)\right)^2,
$$
where
$$
S_i(t)=\sum_{s=1}^tX_{i,s}.
$$

The estimator of Bai (2010) is
\beq\label{bai-def}
t^*_{N,T}=\mbox{argmax}_{1\leq t<T}\sum_{i=1}^N\left(S_i(t)-\frac{t}{T}S_i(T)\right)^2\frac{1}{(t(T-t))},
\eeq
which is the maximum likelihood estimator for $t_0$ assuming that the panels are independent and normally distributed with the same variance, while $\hat{t}_{N,T}$  maximizes the weighted log likelihood.

We impose only conditions on the long run moments of the error processes for our asymptotic results. The long run moments of the errors in panel $i$ are defined by
$$
U_{i,\nu}(t)=E\left|\sum_{s=1}^te_{i,s}\right|^\nu,\;\;1\leq i \leq N,
$$
and we assume that they satisfy the following conditions:
\begin{assumption}[]\label{cond-1}\ \\
{\rm
(i)$\mbox{ There exists}\;\;\sigma_i, 1\leq i \leq N \;\;\mbox{such that}$
$$\max_{1\leq i \leq N}\sup_{1\leq t \leq T} \left|\frac{1}{t}U_{i,2}(t)-\sigma_i^2\right|=o(1),$$
$\mbox{where}\;\;C_1\leq \sigma_i\leq C_2\;\;\mbox{for all}\;\;1\leq i \leq N\;\;\mbox{with some}\;\;0<C_1\leq C_2<\infty,$ and


(ii)
$$\frac{1}{N}\sup_{1\leq t \leq T}\sum_{i=1}^N\left(\frac{1}{t^{\kappa/2}}U_{i,\kappa}(t)\right)^2=O(1)\mbox{  with some  } \kappa>4.
$$

}
\end{assumption}

Additionally, we must assume an analogous condition on the common factors:
\begin{assumption}\label{con-eta}{\rm $\mbox{With some}\;\bar{\kappa}>2.$
$$E\eta_t=0,\; E\left(\sum_{s=1}^t\eta_s\right)^2=t+o(t),\;\;\mbox{and}\;\;E\left|\sum_{s=1}^t\eta_s\right|^{\bar{\kappa}}=O(t^{\bar{\kappa}/2}),\;\mbox{as}\;t\to\infty.$$

}
\end{assumption}

Assumptions \ref{cond-1} and \ref{con-eta} do not assume any specific structure on the error terms, in contrast to the structural break literature with panel data to date. We provide several examples in Section \ref{sec-exa}, including linear and nonlinear time series, martingales and mixing sequences,  where Assumptions \ref{cond-1} and \ref{con-eta} are satisfied.\\


The size of the changes and the correlation between the panels will play a crucial role in the asymptotic distribution of the estimator, and these quantities will be measured by
$$
\Delta_{N,T}=\sum_{i=1}^N\delta_i^2,
\;\;\;
\Gamma_{N,T}=\sum_{i=1}^N\gamma_i^2
\;\;\;
\mbox{and}
\;\;\;
\Sigma_{N,T}=\sum_{i=1}^N\delta_i\gamma_i.
$$
The limit results below are proven when $\min(N,T)\to \infty$.
\begin{assumption}\label{cond-4}
{\rm As $\min(N,T)\to \infty$,


$$ \hspace{-7.25cm}\mbox{(i)} \hspace{6.25cm}\frac{T\Delta_{N,T}}{N}\to\infty,$$

\noindent
and

$$\hspace{-6.80cm}\mbox{(ii)} \hspace{6.25cm}\frac{\Gamma_{N,T}}  {(T\Delta_{N,T})^{1/2}}\to 0.$$
}
\end{assumption}
Assumption \ref{cond-4} means that the sizes of all changes cannot be too small and that the factor loadings cannot be much larger than the sample size and the size of the changes.  Bai (2010) assumes that $\Delta_{N,T}/N$ converges to a positive limit while under the assumptions of Kim (2011), the common factor dominates. A primary goal of our paper is to show how the relationship between the loadings and the sizes of the changes affect the limit distribution of the time of change estimator.

\medskip

Our first result pertains to the asymptotic distribution of $\hat{t}_{N,T}$ when $\Delta_{N,T}$ is large.

\begin{theorem}\label{rate-con-1} If Assumptions \ref{b-1}--\ref{cond-4},
\beq\label{m-cho-2}
\Delta_{N,T}\to \infty
\eeq
and
\beq\label{n-1}
\frac{\Sigma_{N,T}}{\Delta_{N,T}}=o(1)
\eeq
as $N,T\to \infty$, then we have that
\beq\label{new-consi}
P\{\hat{t}_{N,T}=t_0\}\to 1.
\eeq
\end{theorem}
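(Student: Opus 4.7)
The plan is to write the objective function $Q(t) := \sum_{i=1}^N (S_i(t) - (t/T) S_i(T))^2$ as a deterministic ``signal'' plus a stochastic ``remainder'' and to show that the signal is strictly maximized at $t_0$ with a gap large enough to dominate the remainder uniformly in $t \neq t_0$. Decomposing $S_i(t) - (t/T) S_i(T) = \delta_i D(t) + \gamma_i E_\eta(t) + E_i(t)$, where $D(t) = (t - t_0)_+ - (t/T)(T - t_0)$ is the deterministic piece driven by the change, $E_\eta(t) = \sum_{s=1}^t \eta_s - (t/T) \sum_{s=1}^T \eta_s$, and $E_i(t)$ is defined analogously from $e_{i,s}$, squaring and summing over $i$ one obtains
$$Q(t) = \Delta_{N,T} D^2(t) + \Gamma_{N,T} E_\eta^2(t) + \sum_{i=1}^N E_i^2(t) + 2 \Sigma_{N,T} D(t) E_\eta(t) + 2 D(t) \sum_{i=1}^N \delta_i E_i(t) + 2 E_\eta(t) \sum_{i=1}^N \gamma_i E_i(t).$$

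An elementary analysis shows that $D^2$ is piecewise quadratic, attains its unique maximum at $t = t_0$, and satisfies the gap estimate $D^2(t_0) - D^2(t) \geq c(\theta)\, T\, |t - t_0|$ for some $c(\theta) > 0$ and all $t \in \{1, \dots, T-1\}$. Consequently the signal superiority of $t_0$ over any other $t$ is at least $c(\theta)\,T|t-t_0|\Delta_{N,T}$, which diverges even at the nearest neighbor since $T\Delta_{N,T}\to\infty$ by Assumption \ref{cond-4}(i).

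The remaining work is to bound the remainder $R_{N,T}(t) := [Q(t) - Q(t_0)] - \Delta_{N,T}[D^2(t) - D^2(t_0)]$, which splits into five cross terms, each matched to a hypothesis of the theorem. Using Assumption \ref{cond-1} one has $E_i(t) = O_P((t(T-t)/T)^{1/2})$ and the difference $E_i(t) - E_i(t_0) = O_P(|t - t_0|^{1/2})$, with analogous bounds for $E_\eta$ under Assumption \ref{con-eta}; by independence of the panels, $\mathrm{Var}(\sum_i \delta_i E_i(t)) \asymp \Delta_{N,T} t(T-t)/T$ and $\mathrm{Var}(\sum_i \gamma_i E_i(t)) \asymp \Gamma_{N,T} t(T-t)/T$. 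The condition $\Sigma_{N,T}/\Delta_{N,T} = o(1)$ in \eqref{n-1} controls $\Sigma_{N,T}[D(t) E_\eta(t) - D(t_0) E_\eta(t_0)]$; condition \eqref{m-cho-2} controls $D(t)\sum_i \delta_i E_i(t) - D(t_0)\sum_i \delta_i E_i(t_0)$; Assumption \ref{cond-4}(ii) controls both $\Gamma_{N,T}[E_\eta^2(t) - E_\eta^2(t_0)]$ and $E_\eta(t)\sum_i \gamma_i E_i(t) - E_\eta(t_0)\sum_i \gamma_i E_i(t_0)$; and Assumption \ref{cond-4}(i) together with the $\kappa > 4$ moment bound control $\sum_i [E_i^2(t) - E_i^2(t_0)]$ via a Rosenthal-type inequality across the $N$ independent panels. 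Pointwise each cross term is therefore $o_P(T|t - t_0|\Delta_{N,T})$.

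The main obstacle is upgrading these pointwise bounds to uniform ones over $t \neq t_0$. I would split the range into $\{t : |t - t_0| \leq K\}$ for fixed but large $K$ and its complement: on the complement a crude union bound based on Markov's inequality with the $\kappa$-th moment suffices because the signal gap $T|t-t_0|\Delta_{N,T}$ is very large, whereas on the small set Hájek--Rényi or Móricz-type maximal inequalities keep the supremum controlled, yielding $\sup_{t \neq t_0}|R_{N,T}(t)|/(T|t-t_0|\Delta_{N,T}) \to 0$ in probability. Combining this with the signal gap gives $Q(t) < Q(t_0)$ for all $t \neq t_0$ simultaneously with probability tending to one, which is exactly the statement $P(\hat t_{N,T} = t_0) \to 1$.
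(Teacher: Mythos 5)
Your proposal is correct and follows essentially the same route as the paper's proof: the same decomposition of $\sum_{i=1}^N\bigl(S_i(t)-\tfrac{t}{T}S_i(T)\bigr)^2$ into the drift $\Delta_{N,T}r^2(t)$ plus five fluctuation terms, the same gap estimate $r^2(t_0)-r^2(t)\leq -c\,T|t-t_0|$ (Lemma \ref{tt-0}), and the same strategy of showing each fluctuation term is $o_P(T\Delta_{N,T}|t-t_0|)$ uniformly in $t$ via Rosenthal and M\'oricz--type maximal inequalities combined with union bounds over blocks (Lemmas \ref{qq-0}--\ref{ga-de} applied with $M=1$), with each assumption correctly matched to the term it controls. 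The only cosmetic difference is that the paper first localizes the argmax to $[\lfloor\alpha T\rfloor, T-\lfloor\alpha T\rfloor]$ via Lemma \ref{first}, a step your globally valid gap bound makes unnecessary for this particular theorem.
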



The assumption in \eqref{n-1} may seem somewhat restrictive since it rules out the example of fixed break sizes and factor loadings.  We note that due to the result on page 635 of Horv\'ath and Hu\v{s}kov\'a (2012), when the factor loadings are fixed the CUSUM test for the presence of a change point will reject with probability tending to one regardless of if a change exists or not, and so something along the lines of \eqref{n-1} must be assumed for the CUSUM estimator of the time of change to be consistent.

\begin{remark}\label{rem-bai} {\rm Assume that $T$ is fixed. If Assumptions \ref{b-1}--\ref{con-eta} are satisfied and $\Gamma_{N,T}/\Delta_{N,T}\to 0,$ and  $\Delta_{N,T}/N\to \infty$, then \eqref{new-consi} holds.
}
\end{remark}

\medskip

\begin{remark}\label{rem-bai-cor} {\rm In order to establish the consistency of Bai's (2010) estimator in \eqref{bai-def} for fixed $T$ and under our assumptions we must assume in addition that for each $i$, $e_{i,t}$ and $\eta_t$ are uncorrelated random variables, and that  $\{e_{i,t},  0\leq t<\infty, 1\leq i \leq N\}$ and $\{\eta_t, 0\leq t <\infty\}$ are independent. If in addition to Assumptions \ref{b-1}--\ref{con-eta},  $ E\eta_0^4<\infty,$ $\Delta_{N,T}/N^{1/2}\to\infty$
and $\Gamma_{N,T}/\Delta_{N,T}\to 0$ hold,
then we have that
\beq\label{bai-consi}
\lim_{N\to\infty}P\{t^*_{N,T}=t_0\}=1.
\eeq
We provide a proof of Remark \ref{rem-bai-cor} in Appendix \ref{proof}.
}
\end{remark}

\medskip

The main difference between Remarks \ref{rem-bai} and \ref{rem-bai-cor} is in the assumptions $\Delta_{N,T}/N\to\infty$ and $\Delta_{N,T}/N^{1/2}\to\infty$. Remark \ref{rem-bai-cor} allows smaller changes to establish consistency but much stronger assumptions on the sequences $e_{i,t}, 1\leq i \leq N$ and $\eta_t$. If we cannot assume that $e_{i,t}, 1\leq i \leq N$ and $\eta_t$ are sequences of uncorrelated random variables and the independence of $\{\eta_t, t\geq 0\}$ and $\{e_{i,t}, t\geq 0, 1\leq i \leq N\}$, then \eqref{bai-consi} can be proven under conditions of Remark \ref{rem-bai}. In this case \eqref{new-consi} and \eqref{bai-consi} can hold only if $\Delta_{N,T}/N\to \infty$ when $T$ is fixed.\\

\medskip

We now turn to the asymptotic distribution of $\Delta_{N,T}(\hat{t}_{N,T}-t_0)$ when \eqref{m-cho-2} does not hold, i.e.\ the sizes of the changes are small or occur in only a few panels:

\begin{assumption}\label{m-cho-1}\ \\
{\rm
(i) $\Delta_{N,T}=O(1),$\\

(ii) $T\Delta_{N,T}(1+\log(T/\Delta_{N,T}))^{-2/\bark} \;\;\to \infty,\;
\mbox{where}\;\bark\; \mbox{is defined in Assumption } \ref{con-eta},$ and\\

(iii) ${T^{1-2/\kappa}\Delta_{N,T}}/{N^{1/2}} \;\;\to \infty,\;
\mbox{where}\;\kappa\; \mbox{is defined in Assumption } \ref{cond-1}(ii).$\\

}
\end{assumption}

By Assumption \ref{cond-4}, we have that $T\Delta_{N,T}\to \infty$, so Assumption \ref{m-cho-1}(ii) holds if,  $\Delta_{N,T}T(\log T)^{-2/\bar{\kappa}}\to \infty$. If $N/T^{4/\kappa}\to \infty$, i.e.\ the number of the panels is large, then Assumption \ref{m-cho-1}(iii) follows from Assumption \ref{cond-4}. However, Assumption \ref{cond-4}(ii) also holds if the number of panels is relatively small with respect to the length of the panels and the sizes of the changes.\\
Next we introduce an assumption that is a companion to Assumption \ref{cond-1}:

\begin{assumption}\label{cond-tau}\ \\

{\rm
(i)  $\displaystyle \frac{1}{N}\sum_{i=1}^N|Ee_{i,0}e_{i,t}|=O(t^{-\tau})\;\;\mbox{with some  }\tau>2,$ and,

(ii) $\displaystyle \max_{1\leq i \leq N}U_{i,\bar{\tau}}(t)=O(t^{\bar{\tau}/2})\;\;\mbox{with some}\;\bar{\tau}>2.$
}
\end{assumption}

Assumption \ref{cond-tau} requires an upper bound for the average correlation of the errors in the panels and a uniformity condition that augments Assumption \ref{cond-1}(ii).\\

Our first result in this direction covers the case when the sizes of the changes are small and the effect of the correlation between the panels is negligible or moderate. We measure the dependence between the panels with respect to the sizes of the changes by
$$
{\frak s}=\lim_{\min(N,T)\to \infty}\frac{\Sigma_{N,T}}{\Delta^{1/2}_{N,T}}.
$$
To describe the limit distribution of $\hat{t}_{N,T}$ we need to introduce a drift function
\begin{displaymath}
g_\theta(u)=\left\{
\begin{array}{ll}
(1-\theta)|u|,\;\;\;&\mbox{if}\;\;u< 0
\vspace{0.3cm}\\
\theta u,&\mbox{if}\;\;u\geq 0
\end{array}
\right.
\end{displaymath}
and an asymptotic variance term
$$
\sigma^2=\lim_{N,T\to \infty}\frac{1}{\Delta_{N,T}}\sum_{i=1}^N\delta_i^2\sigma^2_i.
$$
We note that by Assumption \ref{cond-1}(i) we get that $C_1\leq \sigma\leq C_2$.  Let
\begin{displaymath}
\calU_i(t)=\left\{
\begin{array}{ll}
\displaystyle \sum_{s=1}^te_{i,s},\;\;&\mbox{if}\;\;t=1,2,3,\ldots
\vspace{.2 cm}\\
0,&\mbox{if}\;\;t=0
\vspace{.2 cm}\\
\displaystyle -\sum_{s=t}^{-1}e_{i,s},\;\;&\mbox{if}\;\;t=-1,-2,-3,\ldots
\end{array}
\right.
\end{displaymath}
The function ${{\frak {u}}}(s,t)$ is  the asymptotic covariance of $\sum_{i=1}^N\delta_i\calU_i(t)$, i.e.\ for all integers $s$ and $t$
\beq\label{covcal}
\calu(s,t)=\lim_{N\to \infty}E\left(\sum_{j=1}^N \delta_j\calU_j(s)\right)\left(\sum_{i=1}^N \delta_i\calU_i(t)\right).
\eeq
We note that $\calu(s,t)=\lim_{N\to \infty}\sum_{i=1}^N \delta_i^2E\calU_i(s)\calU_i(t)$. It follows from Assumption \ref{cond-1}(i) that the covariance function $\calu(s,t)$ is finite for all integers $s$ and $t$. This function only appears in Theorem \ref{w-fac} below when $\Delta_{N,T}$ is above some positive bound for all $N$ and $T$. In this case $\calu(t,t)>0,$ if $t\neq 0.$\\
The next result considers the case when the common factors are negligible.
\medskip
\begin{theorem}\label{w-fac} We assume that Assumptions \ref{b-1}--\ref{cond-tau} hold,
\beq\label{bar-tau}
 \Delta^{-\bar{\tau}/2}_{N,T}\sum_{i=1}^N|\delta_i|^{\bar{\tau}}\to 0,
 \eeq
 where $\bar{\tau}$ is defined in Assumption \ref{cond-tau}(ii), and
 \beq\label{new-rat}
 {\frak s}=0.
 \eeq
(a) If
\beq\label{w-1}
\Delta_{N,T}\to 0,
\eeq
then we have
\beq\label{f-1}
\frac{\Delta_{N,T}(\hat{t}_{N,T}-t_0)}{\sigma^2}\;\;\;\stackrel{{\mathcal D}}{\to}\;\;\mbox{{\rm argmax}}_u \left\{W(u)-g_\theta (u)   \right\},
\eeq
where $W(u), -\infty<u<\infty$ is a two--sided Wiener process.\\
(b) If
\beq\label{conv2del}
\Delta_{N,T}\to {\frak d}\in (0,\infty),
\eeq
then we have
\beq\label{f-2}
\hat{t}_{N,T}-t_0\;\;\;\stackrel{{\mathcal D}}{\to}\;\;\mbox{{\rm argmax}}_t \{{\frak G}(t)-{\frak d}g_\theta(t)\},
\eeq
where ${\frak G}(t), t=0, \pm 1, \pm 2, \ldots $ is Gaussian with $E{\frak G}(t)=0$ and $E{\frak G}(s){\frak G}(t)=\calu(s,t)$.
\end{theorem}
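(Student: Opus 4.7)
The plan is to analyze the argmax of the centered process $\widetilde Q(u) := Q_{N,T}(t_0+u) - Q_{N,T}(t_0)$, which by construction equals $\hat{t}_{N,T}-t_0$. Writing $Y_i(t) := S_i(t) - (t/T)S_i(T) = \delta_i f(t) + \gamma_i h(t) + R_i(t)$, where $f(t)=(t-t_0)^+-(t/T)(T-t_0)$, $h(t)=\sum_{s=1}^t\eta_s-(t/T)\sum_{s=1}^T\eta_s$, and $R_i(t)=\sum_{s=1}^t e_{i,s}-(t/T)\sum_{s=1}^T e_{i,s}$, a direct computation yields $f(t_0+u)-f(t_0) = \theta u + O(|u|/T)$ for $u\geq 0$ and $(1-\theta)|u|+O(|u|/T)$ for $u<0$. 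Expanding the squares and isolating the dominant terms leads to
$$
\widetilde Q(u) = -2\theta(1-\theta)T\Delta_{N,T}\,g_\theta(u) - 2\theta(1-\theta)T\, Z(u) + \mathcal R(u),
$$
where $Z(u) := \sum_{i=1}^N \delta_i[R_i(t_0+u)-R_i(t_0)]$ is the leading stochastic term and $\mathcal R(u)$ collects the signal$\times$factor, higher-order signal, signal$\times$idiosyncratic-remainder, and noise$\times$noise cross contributions.

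The next step is to show $\mathcal R(u) = o_P(T\Delta_{N,T})$ uniformly over $|u|\leq K/\Delta_{N,T}$, each piece matched to a specific hypothesis: the signal$\times$factor piece is killed by ${\frak s}=0$ (equation \eqref{new-rat}) together with Assumption \ref{con-eta}; the noise$\times$noise pieces are bounded via independence across $i$ and the moment conditions in Assumptions \ref{cond-1}(ii) and \ref{cond-tau}, producing after division by $T\Delta_{N,T}$ a relative error of order $(N/(T\Delta_{N,T}))^{1/2}\to 0$ under Assumptions \ref{cond-4}(i) and \ref{m-cho-1}(iii); and the higher-order signal terms are $O(u^2\Delta_{N,T})=o(T\Delta_{N,T})$ since $|u|=O(1/\Delta_{N,T})$. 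A partial-sum maximal inequality applied to $Z$, justified under Assumption \ref{cond-1}(ii) and the Lyapunov-type bound \eqref{bar-tau}, then yields the rate $\hat t_{N,T}-t_0 = O_P(1/\Delta_{N,T})$: for $|u|\geq K/\Delta_{N,T}$ the deterministic drift dominates the fluctuations, so the argmax must lie in $|u|\leq K/\Delta_{N,T}$ with probability tending to one as $K\to\infty$.

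With the rate in hand, I localize by setting $u=v/\Delta_{N,T}$ and dividing by $2\theta(1-\theta)T$: the drift becomes $-g_\theta(v)$, and the random part is $-Z(v/\Delta_{N,T})$. Writing $Z(u)=\sum_{s=t_0+1}^{t_0+u}\xi_s - (u/T)\sum_{s=1}^T\xi_s$ with $\xi_s=\sum_i\delta_i e_{i,s}$, and using that $E\bigl(\sum_{s=1}^n\xi_s\bigr)^2 = \sum_i\delta_i^2 U_{i,2}(n) \sim n\sigma^2\Delta_{N,T}$ by independence across $i$ and Assumption \ref{cond-1}(i), an invariance principle---Lindeberg condition provided exactly by \eqref{bar-tau} and tightness by Assumption \ref{cond-1}(ii)---yields $Z(v/\Delta_{N,T})\Rightarrow\sigma\widetilde W(v)$ in the Skorokhod topology on compact intervals, where $\widetilde W$ is a two-sided standard Brownian motion. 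The argmax continuous-mapping theorem, applicable because $W-g_\theta$ has an a.s.\ unique argmax, combined with the Brownian scaling $v=\sigma^2 w$, gives \eqref{f-1}. For part (b), no local rescaling is needed: $u$ remains integer-valued since $\Delta_{N,T}\to{\frak d}\in(0,\infty)$; finite-dimensional convergence $(Z(u_1),\ldots,Z(u_k))\Rightarrow({\frak G}(u_1),\ldots,{\frak G}(u_k))$ with limit covariance $\calu(s,t)$ from \eqref{covcal} follows from the multivariate CLT for independent sums under \eqref{bar-tau}, and together with the rate bound this yields \eqref{f-2}.

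The main obstacle will be the careful bookkeeping of the cross-products in $\mathcal R(u)$: each has a distinct scale in $(N,T,\Delta_{N,T},\Gamma_{N,T},\Sigma_{N,T})$, and each of the hypotheses \eqref{bar-tau}, \eqref{new-rat}, and Assumptions \ref{cond-4} and \ref{m-cho-1} is invoked precisely where a particular term would otherwise fail to be negligible. A subtler technical point is the uniform control of $Z(u)$ over a window of length $O(1/\Delta_{N,T})$, which may be arbitrarily large: this requires applying classical partial-sum maximal inequalities to a weighted sum of $N$ independent random walks using only the long-run moment assumptions in \ref{cond-1}, \ref{con-eta}, and \ref{cond-tau}.
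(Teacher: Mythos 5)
Your proposal is correct and follows essentially the same route as the paper: the same decomposition of $S_i(t)-\tfrac{t}{T}S_i(T)$ into signal, factor, and idiosyncratic parts, the same identification of the drift $-2\theta(1-\theta)T\Delta_{N,T}g_\theta$ and of $\sum_i\delta_i(\calS_i(t)-\calS_i(t_0))$ as the leading noise, the same term-by-term negligibility estimates tied to Assumptions \ref{cond-4}, \ref{m-cho-1}, \eqref{new-rat} and \eqref{bar-tau}, the same $O_P(1/\Delta_{N,T})$ rate via maximal inequalities, and the same Lyapunov-type invariance principle for the weighted sum of independent random walks followed by the argmax continuous mapping theorem and Brownian scaling (resp.\ finite-dimensional CLT for part (b)). This mirrors the paper's Lemmas \ref{first}--\ref{weak-1} and the concluding argument, so no further comparison is needed.
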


\begin{remark}{\rm Since the proofs of Theorem \ref{w-fac} and the results to follow depend on normal approximations for the sums $\sum_{i=1}^N \sum_{s=1}^te_{i,s}$  and $\sum_{i=1}^N( \sum_{s=1}^te_{i,s})^2$, the independence of $\{e_{i,t}, t\geq 0\}$ on $i$ could be relaxed as it is pointed out by Bai (2010) and Kim (2011). This would be an important consideration, for example, if the panels are indexed by locations, i.e.\ $i={\bf i}$ describes the location of the panel. In this case a spatial structure could be assumed on the errors. If Assumption \ref{b-1}(i) is replaced by a weak dependence or spatial assumption, the norming constants would change in our limit theorems.
}
\end{remark}

\begin{remark}\label{rem-first} {\rm If $\Delta^{-\bar{\tau}/2}_{N,T}\sum_{i=1}^N|\delta_i|^{\bar{\tau}}\to 0$ with some $\bar{\tau}>2$ does not hold, then the limit in \eqref{f-2} might not be the argmax of a normal process with a drift. For example, if $\delta_1=1$ and $\delta_i=0$ for all $i\geq 2$, then the limit in \eqref{f-2} is determined by the error terms $e_{1,t}, -\infty<t<\infty$.}
\end{remark}
\begin{remark}\label{rem-sec} {\rm The distribution  of $\mbox{{\rm argmax}}_u \{W(u)-g_\theta (u)\}$ is known explicitly. Its density was derived by Ferger (1994) from Corollary 4 of Bhattacharya and Brockwell (1976) (cf.\ Cs\"org\H{o} and Horv\'ath (1997, p.\ 177)).}

\end{remark}
\begin{remark}\label{rem-three} {\rm If \eqref{conv2del} holds, $\gamma_i=0$ for all $1\leq i \leq N$ and $Ee_{i,0}e_{i,t}=0$ for all $1\leq i \leq N$ and $t\neq 0$, then we get the analogue of Theorem 4.2 of Bai (2010). In this case ${\frak G}(s)$ is a Wiener process on integers, so the main difference between the limits in \eqref{f-1} and \eqref{f-2} is that the argmax is computed on the real line or on  integers.}
\end{remark}

So far the common factor was part of the error term and it had a negligible contribution to the limit distribution in Theorem \ref{w-fac}. However,  it has been observed in testing for changes in panel data that the effect of strong correlation between panels might make standard statistical procedures invalid (cf.\ Horv\'ath and Hu\v{s}kov\'a (2012)).  Bai and Zhang (2010) provide some economic examples when the common factor between the panels is strong so its effect must be built into testing and estimating procedures.  Our next result covers the case when the order of the correlation between the panels and the sizes of the changes are essentially the same. Since the contribution of the  $\eta_t$'s to the limit will not be negligible we need to specify the relation between the errors and the common factors:
\begin{assumption} \label{total}{\rm $\mbox{The sequences  } \{\eta_t, -\infty<t<\infty\}\mbox{ and }\{e_{i,t}, -\infty<t<\infty, 1\leq i \leq N\}  \mbox{ are } \mbox{independent.}$
}
\end{assumption}

Similarly to ${\mathcal U}_i(t)$, we introduce
\begin{displaymath}
{\mathcal V}(t)=\left\{
\begin{array}{ll}
\displaystyle \sum_{s=1}^t\eta_{s},\;\;&\mbox{if}\;\;t=1,2,3,\ldots
\vspace{.2 cm}\\
0,&\mbox{if}\;\;t=0
\vspace{.2 cm}\\
\displaystyle -\sum_{s=t}^{-1}\eta_{s},\;\;&\mbox{if}\;\;t=-1,-2,-3,\ldots,
\end{array}
\right.
\end{displaymath}
which  will be part of the limit distribution when \eqref{conv2del} holds or $\Sigma_{N,T}$ is proportional to $\Delta_{N,T}$. In all the other cases
we  assume the asymptotic normality of ${\mathcal V}(t)$:
\begin{assumption}\label{weak-v}{\rm
$$T^{-1/2}\sum_{t=1}^{\lf Tu\rf}\eta_t\;\;\stackrel{{\mathcal D}[0,1]}{\longrightarrow}\;\;W(u),\;\;\mbox{where } W \mbox{ is a Wiener process.}$$}
\end{assumption}

\medskip
\begin{theorem}\label{w-fac-mid} We assume that Assumptions \ref{b-1}--\ref{weak-v}, and \eqref{bar-tau} hold, and
\beq\label{sidelsame}
0<| {\frak s}|<\infty.
\eeq
(a) If \eqref{w-1} holds, then we have
\beq\label{f-1-m}
\frac{\Delta_{N,T}(\hat{t}_{N,T}-t_0)}{\sigma^2+{\frak s}^2}\;\;\;\stackrel{{\mathcal D}}{\to}\;\;\mbox{{\rm argmax}}_u \left\{W(u)-g_\theta (u)   \right\},
\eeq
where $W(u), -\infty<u<\infty$ is a two--sided Wiener process.\\
(b) If \eqref{conv2del} holds,
then we have
\beq\label{f-2-m}
\hat{t}_{N,T}-t_0\;\;\;\stackrel{{\mathcal D}}{\to}\;\;\mbox{{\rm argmax}}_t \{{\frak G}(t)+{\frak s}{\frak d}^{1/2}{\mathcal V}(t)-{\frak d}g_\theta(t)\},
\eeq
where ${\frak G}(t), t=0, \pm 1, \pm 2, \ldots $ is the  Gaussian process defined in Theorem \ref{w-fac}, independent of ${\mathcal V}(t), t=0, \pm 1, \pm 2, \ldots $
\end{theorem}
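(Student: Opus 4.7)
The plan is to show that, after a suitable normalisation, the process $t\mapsto\sum_{i=1}^N Z_i(t)^2-\sum_{i=1}^N Z_i(t_0)^2$, with $Z_i(t)=S_i(t)-(t/T)S_i(T)$, decomposes into a deterministic drift proportional to $-g_\theta$, two asymptotically independent Gaussian fluctuations (one from the idiosyncratic errors, one from the common factor $\eta_t$), and a remainder that vanishes in probability uniformly on any finite window. Since the $\mu_i$'s cancel in $Z_i(t)$, one has the exact decomposition
\[
Z_i(t)=\delta_i A(t)+\gamma_i B(t)+C_i(t),\qquad A(t)=(t-t_0)^+-\tfrac{t}{T}(T-t_0),
\]
with $B(t)=\sum_{s=1}^t\eta_s-(t/T)\sum_{s=1}^T\eta_s$ and $C_i(t)=\sum_{s=1}^t e_{i,s}-(t/T)\sum_{s=1}^T e_{i,s}$. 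Expanding $\sum_iZ_i(t)^2-\sum_iZ_i(t_0)^2$ produces the single deterministic term $\Delta_{N,T}[A(t)^2-A(t_0)^2]$ together with five stochastic terms: the squared contributions $\Gamma_{N,T}[B(t)^2-B(t_0)^2]$ and $\sum_i[C_i(t)^2-C_i(t_0)^2]$, and the three cross products in $AB$, $AC_i$, and $BC_i$.

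Setting $t=t_0+u$, a direct computation gives $A(t_0+u)^2-A(t_0)^2=-2T\theta(1-\theta)g_\theta(u)(1+o(1))$ and $A(t_0)=-T\theta(1-\theta)(1+o(1))$, uniformly on windows $|u|=o(T)$. Dividing the entire expansion by $2T\theta(1-\theta)$, the drift becomes $-\Delta_{N,T}g_\theta(u)(1+o(1))$, while the only stochastic contributions persisting at this scale are $\pm\sum_i\delta_i[C_i(t_0+u)-C_i(t_0)]$ and $\pm\Sigma_{N,T}[B(t_0+u)-B(t_0)]$. By stationarity of $\{e_{i,t}\}$, Assumption \ref{cond-1}(ii), and the Lindeberg-type condition \eqref{bar-tau}, the first of these converges in finite-dimensional distribution (as $T\to\infty$ and then $N\to\infty$) to $\mathfrak{G}(u)$ in case (b), and---under the rescaling $u=v/\Delta_{N,T}$, justified by the positive homogeneity of $g_\theta$---to $\sigma W(v)$ in case (a). Assumption \ref{weak-v}, combined with $\Sigma_{N,T}/\Delta_{N,T}^{1/2}\to\mathfrak{s}$, gives the analogous convergence of the second term to $\mathfrak{s}\mathfrak{d}^{1/2}\mathcal{V}(u)$ in case (b) and to $\mathfrak{s}W^*(v)$ in case (a). Assumption \ref{total} makes the two Gaussian limits independent, so in case (a) they collapse, via Brownian self-similarity, to a single Wiener process of variance $\sigma^2+\mathfrak{s}^2$.

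The bulk of the proof is to show that the remaining five pieces vanish after this same normalisation. The quadratic error term is telescoped as $\sum_i[C_i(t)-C_i(t_0)][C_i(t)+C_i(t_0)]$ and handled using Assumption \ref{cond-1}(ii) together with Assumption \ref{cond-tau}; the $\Gamma_{N,T}[B(t)^2-B(t_0)^2]$ term is controlled by Assumption \ref{cond-4}(ii), which is precisely $\Gamma_{N,T}^2/(T\Delta_{N,T})\to 0$; the $B$--$C_i$ cross term uses the independence in Assumption \ref{total} to factor its second moment into a product of marginal bounds; the non-leading piece of the $AB$ cross product is bounded via the Cauchy--Schwarz estimate $|\Sigma_{N,T}|\le\Delta_{N,T}^{1/2}\Gamma_{N,T}^{1/2}$ combined with Assumption \ref{cond-4}(ii); and the lower-order contributions to the two main stochastic terms---those multiplied by $A(t)-A(t_0)$ rather than by $A(t_0)$---are smaller than the leading contributions by a factor $\sqrt{|u|/T}=o(1)$ on the relevant windows. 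In parallel one needs the tightness/localisation bound $\hat{t}_{N,T}-t_0=O_P(\Delta_{N,T}^{-1})$ in case (a), respectively $O_P(1)$ in case (b); this is the standard change-point argument that outside a window of the correct size the drift $-\Delta_{N,T}g_\theta(u)$ overpowers the supremum of the stochastic part, the latter being bounded by a H\'ajek--R\'enyi type maximal inequality using Assumptions \ref{cond-1}(ii) and \ref{con-eta}.

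Having reduced the problem to a finite window and identified the limits of the surviving pieces, the conclusion follows from argmax-continuity. In case (b), applied on the space of bounded integer sequences, one obtains \eqref{f-2-m}, with the sign of $\mathfrak{G}$ absorbed by its centrality and the sign of $\mathcal{V}$ absorbed into the sign of $\mathfrak{s}$ (since the hypothesis $0<|\mathfrak{s}|<\infty$ is indifferent to the sign of $\mathfrak{s}$). In case (a), argmax-continuity on $C(\mathbb{R})$ gives $\Delta_{N,T}(\hat{t}_{N,T}-t_0)\stackrel{\mathcal{D}}{\to}\mbox{argmax}_v\{\sqrt{\sigma^2+\mathfrak{s}^2}\,W(v)-g_\theta(v)\}$, and the Brownian rescaling $W(cv)\stackrel{d}{=}c^{1/2}W(v)$ together with the positive homogeneity of $g_\theta$ converts this into $(\sigma^2+\mathfrak{s}^2)\,\mbox{argmax}_u\{W(u)-g_\theta(u)\}$, establishing \eqref{f-1-m}. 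The main obstacle throughout is the uniform-in-$u$ control of the five remainder terms under only the weak long-run moment conditions of Assumptions \ref{cond-1}, \ref{con-eta}, \ref{cond-tau} and \eqref{bar-tau}: standard fourth-moment or linear-process techniques are not available, and one must rely instead on maximal inequalities and central limit theorems valid in this much broader generality.
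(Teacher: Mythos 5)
Your proposal is correct and follows essentially the same route as the paper: the identical decomposition of $\left(S_i(t)-\tfrac{t}{T}S_i(T)\right)^2$ into the drift $\delta_i^2 r^2(t)$ plus five stochastic terms, the H\'ajek--R\'enyi/maximal-inequality localisation to a window of width $O_P(1/\Delta_{N,T})$, the identification of $r(t_0)\sum_i\delta_i(\calS_i(t)-\calS_i(t_0))$ and $r(t_0)\Sigma_{N,T}(V(t)-V(t_0))$ as the only surviving fluctuations (handled by the Lyapunov CLT under \eqref{bar-tau} and by Assumption \ref{weak-v}, made independent by Assumption \ref{total}), and the concluding argmax-continuity plus Brownian self-similarity step giving the norming $\sigma^2+{\frak s}^2$. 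This is precisely the content of Lemmas \ref{lem-w}, \ref{weak}, \ref{weak-1} and \ref{rate-con} combined as in the paper's proof of Theorem \ref{w-fac-mid}.
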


The effect of correlation between the panels is demonstrated in Theorem \ref{w-fac-mid}. The limit distribution in \eqref{f-1-m} remained the same as in \eqref{f-1} but the variance of the estimator increased by ${\frak s}^2$. The effect of the common factor is more transparent in \eqref{f-2-m} since an additional term appears in the limit which depends on the distribution of the common factors.
\medskip
\begin{theorem}\label{w-fac-strong} If Assumptions \ref{b-1}--\ref{weak-v}, and \eqref{bar-tau} hold, and
\beq\label{sidestrong}
| {\frak s}|=\infty,
\eeq
then we have
\beq\label{f-1-s}
\frac{\Delta^2_{N,T}}{\Sigma^2_{N,T}}(\hat{t}_{N,T}-t_0)\;\;\;\stackrel{{\mathcal D}}{\to}\;\;\mbox{{\rm argmax}}_u \left\{W(u)-g_\theta (u)   \right\},
\eeq
where $W(u), -\infty<u<\infty$ is a two--sided Wiener process.  
\end{theorem}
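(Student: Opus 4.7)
My plan is to expand the objective function $Q(t) := \sum_{i=1}^N (S_i(t) - (t/T) S_i(T))^2$ around $t_0$, identify the two dominant contributions to $Q(t_0+k) - Q(t_0)$ at the scale $k = u\,\Sigma_{N,T}^2/\Delta_{N,T}^2$, show that every other piece is uniformly negligible on compacts in $u$, and finish with the argmax continuous mapping theorem. Writing $Z_{i,t} := S_i(t) - (t/T) S_i(T) = \delta_i d(t) + \gamma_i V(t) + E_i^*(t)$, where $d(t) = (t-t_0)^+ - (t/T)(T-t_0)$, $V(t) = \sum_{s=1}^t \eta_s - (t/T)\sum_{s=1}^T \eta_s$, and $E_i^*(t)$ is the analogous CUSUM of $\{e_{i,s}\}$, squaring and summing yields
\begin{equation*}
Q(t) = \Delta_{N,T} d(t)^2 + 2\Sigma_{N,T} d(t) V(t) + \Gamma_{N,T} V(t)^2 + 2d(t) F(t) + 2 V(t) G(t) + \sum_{i=1}^N E_i^*(t)^2,
\end{equation*}
with $F(t) := \sum_i \delta_i E_i^*(t)$ and $G(t) := \sum_i \gamma_i E_i^*(t)$. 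Setting $\Delta_d(k) := d(t_0+k) - d(t_0)$ and $\Delta_V(k) := V(t_0+k) - V(t_0)$, and using $d(t_0) \sim -T\theta(1-\theta)$ together with $\Delta_d(k) = g_\theta(k)(1+o(1))$ uniformly for $|k|/T \to 0$, the only two pieces of $Q(t_0+k) - Q(t_0)$ which survive at the leading order $c_{N,T} := 2T\theta(1-\theta)\Sigma_{N,T}^2/\Delta_{N,T}$ are $2\Delta_{N,T} d(t_0)\Delta_d(k) \sim -c_{N,T}\, g_\theta(u)$ and $2\Sigma_{N,T} d(t_0)\Delta_V(k) \sim -c_{N,T}\, W(u)$, where in the second identity Assumption \ref{weak-v} together with Brownian self-similarity gives $\Delta_V(k) = (\Sigma_{N,T}/\Delta_{N,T}) W(u)(1+o_P(1))$ on compacts.

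The main technical step is to verify that all the remaining pieces of the expansion are $o_P(c_{N,T})$ uniformly on compacts in $u$. These are the quadratic correction $\Delta_{N,T}\Delta_d(k)^2$; the subdominant cross terms $2\Sigma_{N,T}[V(t_0)\Delta_d(k) + \Delta_d(k)\Delta_V(k)]$; the factor contribution $\Gamma_{N,T}[2V(t_0)\Delta_V(k) + \Delta_V(k)^2]$; and the three noise pieces $2[d(t_0+k)F(t_0+k) - d(t_0)F(t_0)]$, $2[V(t_0+k)G(t_0+k) - V(t_0)G(t_0)]$, and $\sum_i[E_i^*(t_0+k)^2 - E_i^*(t_0)^2]$. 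Each is controlled by combining Cauchy--Schwarz ($\Sigma_{N,T}^2 \le \Delta_{N,T}\Gamma_{N,T}$), Assumption \ref{cond-4}(ii) (which yields $\Gamma_{N,T}^2 = o(T\Delta_{N,T})$ and hence, since $|{\frak s}| = \infty$ forces $\Gamma_{N,T} \to \infty$, also $\Gamma_{N,T}/(T\Delta_{N,T}) \to 0$), the hypothesis $|{\frak s}| = \infty$ (so $\Sigma_{N,T}^2/\Delta_{N,T} \to \infty$), the bound $V(t_0) = O_P(T^{1/2})$ from Assumption \ref{con-eta}, and the long run moment controls in Assumptions \ref{cond-1} and \ref{cond-tau} applied to $F$, $G$, and $\sum_i E_i^*(t)^2$; Assumption \ref{total} permits conditioning on $\{\eta_t\}$ when bounding the $V\cdot G$ cross term.

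Granting the above, finite-dimensional convergence of $M_{N,T}(u) := c_{N,T}^{-1}[Q(t_0 + \lfloor u\,\Sigma_{N,T}^2/\Delta_{N,T}^2\rfloor) - Q(t_0)]$ to $-W(u) - g_\theta(u)$ on compacts is immediate, and a modulus-of-continuity bound derived from the $\bar\kappa$-moment control of Assumption \ref{con-eta} delivers tightness. A tightness estimate for $\hat{t}_{N,T}$ at the rate $\Sigma_{N,T}^2/\Delta_{N,T}^2$---that is, for every $\varepsilon > 0$ there exists $M$ with $P(|\hat{t}_{N,T} - t_0| > M\Sigma_{N,T}^2/\Delta_{N,T}^2) < \varepsilon$---follows from a dyadic peeling argument in which the negative drift $-2T\theta(1-\theta)\Delta_{N,T} g_\theta(k)$ eventually outstrips the stochastic remainder by exactly the same moment inequalities. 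The argmax continuous mapping theorem then gives $(\Delta_{N,T}^2/\Sigma_{N,T}^2)(\hat{t}_{N,T} - t_0) \stackrel{\mathcal D}{\to} \mathrm{argmax}_u\{-W(u) - g_\theta(u)\} \stackrel{\mathcal D}{=} \mathrm{argmax}_u\{W(u) - g_\theta(u)\}$, the last equality because $-W$ is again a two-sided Wiener process. The main obstacle is the bookkeeping in the negligibility step: several of the subdominant pieces are smaller than $c_{N,T}$ only by the narrowest margin allowed by Assumption \ref{cond-4}(ii) combined with $|{\frak s}| = \infty$, so the estimates have to be sharp rather than crude.
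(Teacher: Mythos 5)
Your proposal is correct and follows essentially the same route as the paper's proof: localize at the scale $M=(\Sigma_{N,T}/\Delta_{N,T})^2$ (the paper's Lemma A.11, proved by the same dyadic blocking you call peeling), show that only the drift term $\Delta_{N,T}(r^2(t)-r^2(t_0))$ and the cross term $2\Sigma_{N,T}(V(t)r(t)-V(t_0)r(t_0))$ survive, and conclude with the argmax continuous mapping theorem. Your explicit norming $c_{N,T}=2T\theta(1-\theta)\Sigma_{N,T}^2/\Delta_{N,T}$ is in fact slightly more careful than the paper's displayed normalization by $1/T$, which omits the factor $\Delta_{N,T}/\Sigma_{N,T}^2$ needed to make both dominant terms $O(1)$.
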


\medskip
Theorem \ref{w-fac-strong} covers the case when the limit distribution of the estimator for the time of change is completely determined by the common factors. The limit distribution in \eqref{f-1-s} is the same as in \eqref{f-1} and \eqref{f-1-m} but the rate of convergence is much slower. The effect of having several panels with changes is overshadowed by the strong influence of the common factor. For further results when the common factors dominate the limit distribution we refer to Kim (2011). \\

\section{Examples}\label{sec-exa}\setcounter{equation}{0}
In this section, we study some examples of error processes that satisfy the assumptions of Section \ref{sec-main}. We restrict our attention to establishing Assumption \ref{cond-1} for examples of possible model error sequences $e_{i,t}$'s, but the same sequences could be used for the common factors as well.\\

\begin{example}\label{exa-ind} {\rm Let $e_{i,t}, -\infty<t<\infty$ be independent, identically distributed random variables with $Ee_{i,0}=0$, $Ee^2_{i,0}=\sigma_i^2$ and $E|e_{i,0}|^\kappa<\infty.$ Due to independence we have that
\beq\label{exa-1}
U_{i,2}(t)=t\sigma_i^2\;\;\;\mbox{for all  } t=0, 1, 2\ldots
\eeq
  By the Rosenthal inequality (cf.\ Petrov (1995, p.\ 59)) we obtain  for all $t\geq 1$ that
$$
U_{i,\kappa}(t)\leq C\left\{tE|e_{i,0}|^\kappa+t^{\kappa/2}\sigma_i^{\kappa}\right\}\leq Ct^{\kappa/2}\{E|e_{i,0}|^\kappa+\sigma_i^{\kappa}\}
,
$$
where $C$ is an absolute constant, depending on $\kappa>2$ only. If the error terms in each panel are independent and identically distributed, then,  assuming $C_1\leq \sigma_i\leq C_2$ for all $1\leq i \leq N$,  Assumption \ref{cond-1}(i) holds; Assumption \ref{cond-1}(ii) is satisfied if
\beq\label{exa-2}
\frac{1}{N}\sum_{i=1}^N(E|e_{i,0}|^\kappa)^2=O(1)\;\;\;\mbox{with some}\;\;\kappa>4.
\eeq
If $\max_{1\leq i \leq N}E|e_{i,0}|^{\bar{\tau}}\leq C_0$ with some $\bar{\tau}>2$ and $C_0>0$, then Assumption \ref{cond-tau}(ii) is also fulfilled.}
\end{example}

\medskip

ARMA processes are very often used in classical time series analysis and our next example shows that stationary ARMA processes satisfy the basic assumptions of the first section. We consider the more general case of linear processes, which are investigated by Bai (2010), Kim (2011), and Horv\'ath and Hu\v{s}kov\'a (2012).

\begin{example}\label{exa-lin}{\rm  We assume that $\{\vare_{i,t}, -\infty <t<\infty\}$ are independent and identically distributed random variables with $E\vare_{i,0}=0$ and $E|\vare_{i,0}|^\kappa<\infty$ with some $\kappa>4$. The error terms $e_{i,t}$ form a linear process given by
$$
e_{i,t}=\sum_{\ell=0}^\infty c_{i,\ell}\vare_{i,t-\ell},
$$
where $\sup_\ell\ell^{-2-\alpha_i}|c_{i,\ell}|\leq C_i$ with some $C_i>0$ and $\alpha_i>0$.
By the Phillips and Solo (1992) representation we get
$$
\sum_{s=1}^t e_{i,s}-\bar{C}_i\sum_{s=1}^t\vare_{i,s}=\sum_{j=-\infty}^\infty\left( \sum_{k=1}^t\bar{c}_{k-j} \right)\vare_{i,j},
$$
where $\bar{C}_i=\sum_{\ell=0}^\infty c_{i,\ell}\neq 0, \bar{c}_{i,0}=c_{i,0}-\bar{C}_i, \bar{c}_{i,\ell}={c}_{i,\ell}$, if $\ell\geq 1$ and $\bar{c}_{i,\ell}=0$, if $\ell\leq -1.$ Minkowski's inequality and the discussion in Example \ref{exa-ind} yield that we need to choose $\sigma_i^2=\bar{C}_i^2E\vare ^2_{i,0}$ in Assumption \ref{cond-1}(i) and we also have  $U_{i,\kappa}(t)=O(t^{\kappa/2})$ and $|Ee_{i,0}e_{i,t}|=O(t^{-2-\alpha_i})$, as $t\to \infty$.
}\end{example}
\medskip

The next example assumes a martingale structure of the errors.

\begin{example}\label{exa-mart}{\rm  Let us assume that $\{e_{i,t}, -\infty<t<\infty\}$ is a stationary orthogonal martingale difference sequence with respect to some filtration with $Ee_{i,0}=0$, $Ee^2_{i,0}=\sigma_i^2$ and $E|e_{i,0}|^\kappa<\infty.$ Then \eqref{exa-1} as well as Assumption \ref{cond-tau} hold. By Li (2003) we also have
$$
U_{i,\kappa}(t)\leq Ct^{\kappa/2}E|e_{i,0}|^\kappa\;\;\;\mbox{with some constant  } C\;\;\mbox{depending only on  }\kappa,
$$
completing the proof of \eqref{exa-2}. Under assumption  $\max_{1\leq i \leq N}E|e_{i,0}|^{\bar{\tau}}\leq C_0$ with some $\bar{\tau}>2$ and $C_0>0$, we obtain Assumption \ref{cond-tau}.
}
\end{example}

\medskip
Since the early 1980's, ARCH, GARCH processes and their various extensions have become extremely popular  models in the analysis of macroeconomic and financial data. For a survey and detailed study of volatility models we refer to Francq and Zako\"{\i}an (2010). The next example shows that a large class of volatility processes satisfies the assumptions in Section \ref{sec-main}.\\
\begin{example}\label{exa-vol}{\rm We assume that $\{\vare_{i,t}, -\infty <t<\infty\}$ are independent and identically distributed random variables with $E\vare_{i,0}=0$ and $E\vare_{i,0}^2=1$. The error terms are defined by
\beq\label{garch}
e_{i,t}=h_{i,t}\vare_{i,t},
\eeq
where the volatility process $h_{i,t}>0$ is measurable with respect to the $\sigma$--algebra generated by $\vare_{i,s}, s\leq t-1$. Usually, $h_{i,t}$ is given by a recursion involving $e_{i,s}, h_{i,s}, s\leq t-1$. Francq and Zako\"{\i}an (2010) provides conditions for the existence of a stationary solution of \eqref{garch} in several models and establishes their basic properties. Assuming that $E|e_{i,0}|^\kappa<\infty $ with some $\kappa>4$,  $\{e_{i,t}, -\infty<t<\infty\}$ is a stationary orthogonal martingale satisfying the conditions in Example \ref{exa-mart}.
In case of the most popular GARCH$(p,q)$ model $h_{i,t}=\omega +\sum_{\ell=1}^p\alpha_\ell e_{i,t-\ell}^2+\sum_{\ell=1}^q\beta_j h_{i,t-j},$  $\omega>0, \alpha_\ell\geq 0, \beta_j\geq 0, 1\leq \ell \leq p, 1\leq j \leq q.$ The necessary and sufficient condition for the existence of the higher moments in case of GARCH(1,1) is given in Nelson (1990). He and Ter\"{a}svirta (1999), Ling and McAleer (2002) and Berkes et al.\ (2003) partially extends his results to the more general case. The existence of moments of augmented GARCH sequences are discussed in Carrasco and Chen (2002) and H\"{o}rmann (2008).
}
\end{example}

\medskip
Linear processes and the volatility models of Example \ref{exa-vol} are in the class of $m$--decomposable processes.
\begin{example}\label{exa-deco}{\rm We say the $e_{i,t}$ is a Bernoulli shift if it can be written as
$$
e_{i,t}=f_i(\vare_{i,t}, \vare_{i,t-1},\vare_{i,t-2}, \ldots )
$$
with some functional $f_i$, where $\{\vare_{i,t}, -\infty<t<\infty\}$ are independent and identically distributed random variables. The conditions of Section \ref{sec-main-res} are satisfied if the  Bernoulli shift is $m$--decomposable, i.e. if
$$
\sum_{m=1}^\infty( E|e_{i,t}-e^{(m)}_{i,t}|^{\kappa})^{1/\kappa}<\infty \;\;\;\mbox{with some   }\kappa>4,
$$
 where $e^{(m)}_{i,t}=f_i(\vare_{i,t}, \vare_{i,t-1},\ldots ,\vare_{i,t-m+1},\vare^*_{i,t-m}, \vare^*_{i,t-m-1}\ldots )$, and the $ \vare^*_{i,t}$'s   are independent copies of $\vare_{i,0}$, independent of $\vare_{i,t}, 1\leq i \leq N, -\infty <t<\infty$.  Berkes et al.\ (2011) proved that there is constant $C_i$ such that
$
U_{i,\kappa}(t)\leq C_i t^{\kappa/2},
$
 $|Ee_{i,0}e_{i,t}|\leq C_it^{-\kappa/2}$ and $U_{i,2}(t)=t\sigma_i^2+(t)$, as $t\to \infty$, with some $\sigma_i^2.$
They also provide several examples for $m$--decomposable Bernoulli shifts.
}
\end{example}

\medskip
\begin{example}\label{exa-mix}{\rm There is a well developed theory of partial sums of mixing random variables where the long run moments $U_{i,\kappa}(t)$ play a crucial role. It has been established under various conditions that $U_{i,2}(t)=t\sigma_i^2+o(t)$ and $U_{i,\kappa}(t)=O(t^{\kappa/2})$, as $t \to \infty.$ For surveys on mixing processes we refer to Bradley (2007) and Dedecker et al.\ (2007).
}
\end{example}

\medskip
\begin{example}\label{exa-gen}{\rm We assumed in Examples \ref{exa-lin} and \ref{exa-vol} that the innovations $\vare_{i,t}, -\infty <t<\infty$ are independent and identically distributed. However, this assumption can be replaced with the less restrictive requirement that $\vare_{i,t}, -\infty <t<\infty$ is a stationary sequence. Rosenthal--type inequalities for sums of functionals of stationary processes are developed in Wu (2002) and Merlev\'ede et al.\ (2006). These results can be used to establish Assumptions \ref{cond-1}, and \ref{cond-tau}.
}
\end{example}

\bigskip

\section{Estimation of norming sequences}\label{subest}

Theorems \ref{w-fac}--\ref{w-fac-strong} contain the limit distribution of $\hat{t}_{N,T}$ with different normalizations to show the effects of the sizes of changes and the loading factors. However, in case of finite $N$ and $T$ it is impossible to check which specific condition on the growths of $\Delta_{N,T}$ and $\Sigma_{N,T}$ holds, so we require norming sequences that would work in all possible cases. Let
$$
\Xi_{N,T}=\sum_{i=1}^N\sigma_i^2\delta_i^2+ \displaystyle\Sigma^2_{N,T}.
$$
Under the conditions of Theorems \ref{w-fac}--\ref{w-fac-strong} we have that
\beq\label{joint}
\frac{\Delta^2_{N,T}}{\Xi_{N,T}}(\hat{t}_{N,T}-t_0)\;\;\;\;\mbox{converges in distribution}.
\eeq
The limit distribution in \eqref{joint} is $\mbox{\rm argmax}_u\{W(u)-g_\theta(u)\}$ except in the special cases of \eqref{f-2} and \eqref{f-2-m}. In these cases, the limit distribution is the argmax of a process defined on integers. The limits in \eqref{f-2} and  \eqref{f-2-m}
depend on the distributions of $\{X_{i,t}, 1\leq i \leq N, 1\leq t \leq T\}$. If ${\frak d}$ is close to 0 in \eqref{f-2} or \eqref{f-2-m} the limiting distributions can be well approximated with $\mbox{\rm argmax}_u(W(u)-g_\theta(u))$.  Bai (2010) provides numerical evidence that $\mbox{\rm argmax}_u\{W(u)-g_\theta(u)\}$ gives a reasonable approximation for the limit in \eqref{f-2} when ${\frak u}(s,t)=\min(t,s)$. Hence we recommend that $\mbox{\rm argmax}_u(W(u)-g_\theta(u))$ can be used as the limit in \eqref{joint} in practice.\\

The limit result in \eqref{joint} can only be used  for hypothesis testing or confidence intervals if the norming factor can be consistently estimated from the sample. We estimate $\Delta_{N,T}$ with
$$
\hat{\Delta}_{N,T}=\sum_{i=1}^N\left(\frac{1}{\hat{t}_{N,T}}\sum_{1\leq t \leq \hat{t}_{N,T}}X_{i,t}-\frac{1}{T-\hat{t}_{N,T}}\sum_{\hat{t}_{N,T}<t\leq T}X_{i,t}
\right)^2.
$$
It is more difficult to estimate $\Xi_{N,T}$. Let
\beq\label{u-def}
U_N(t)=
\sum_{i=1}^N\left(S_i(t)-\frac{t}{T}S_i(T)\right)^2,
\eeq
$$
\hat{r}_{N,T}(t)=-t\frac{T-\hat{t}_{N,T}}{T}+(t-\hat{t}_{N,T})I\{t>\hat{t}_{N,T}\},
$$
and
\beq\label{hat-r}
\hat{r}_{N,T}=\hat{r}_{N,T}(\hat{t}_{N,T})=-\hat{t}_{N,T}\frac{T-\hat{t}_{N,T}}{T}.
\eeq
The estimator for $\Xi_{N,T}$ is defined as
\begin{align*}
\hat{\Xi}_{N,T}
=\frac{1}{2(M_2-M_1)}&\sum_{M_1<|v| \leq M_2}\frac{1}{4|v|\hat{r}^2_{N,T}}\biggl(U_N(\hat{t}_{N,T}+v)-U_N(\hat{t}_{N,T})\\
&-\hat{\Delta}_{N,T}(\hat{r}^2_{N,T}(\hat{t}_{N,T}+v)- \hat{r}_{N,T}^2 )
\biggl)^2.
\end{align*}

\begin{theorem}\label{th-cons} (i) We assume that the conditions of Theorem \ref{w-fac} or \ref{w-fac-mid} are satisfied. If $M_1<M_2$, $M_1\to \infty$ and $M_2/T\to 0$, then
\beq\label{d-cons}
\frac{\hat{\Delta}_{N,T}}{{\Delta}_{N,T}}\;\;\;\stackrel{P}{\to}\;\;1
\eeq

and

\beq\label{xi-const}
\frac{\hat{\Xi}_{N,T}}{\Xi_{N,T}}\;\;\stackrel{P}{\to}\;\;1.
\eeq
\noindent
(ii)  We assume that the conditions of Theorem  \ref{w-fac-strong} are satisfied. If $M_1<M_2$, $M_1\to \infty$ and
$$
M_2/\min( T, \Sigma_{N,T}^2/\Delta^2_{N,T})\to 0,
$$
then \eqref{d-cons} and \eqref{xi-const} hold.

\end{theorem}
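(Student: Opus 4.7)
The plan has three parts: (a) build a localization event that places the entire averaging window inside a shrinking neighbourhood of $t_0$, and, when needed, on one side of $t_0$; (b) prove $\hat\Delta_{N,T}/\Delta_{N,T}\to 1$; (c) prove that $\hat\Xi_{N,T}$ behaves like a Bartlett-type estimator of the long-run variance of the dominant linear CUSUM increment. For (a) the key input is the rate $\hat t_{N,T}-t_0=O_P(\Xi_{N,T}/\Delta^2_{N,T})$ implicit in Theorems \ref{w-fac}--\ref{w-fac-strong}. Combined with $M_1\to\infty$, $M_2/T\to 0$, and, in case (ii), $M_2=o(\Sigma^2_{N,T}/\Delta^2_{N,T})$, this produces an event of probability tending to one on which $|\hat t_{N,T}-t_0|+M_2=o(T)$, and on which no index in $\{\hat t_{N,T}+v:M_1<|v|\le M_2\}$ lies on the opposite side of $t_0$ from $\hat t_{N,T}$.

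For (b), I would decompose the within-panel sample-mean difference as
\begin{align*}
\bar X_{i,1:\hat t_{N,T}}-\bar X_{i,\hat t_{N,T}+1:T}=-c_{N,T}\delta_i+\gamma_iR+E_i,
\end{align*}
where $c_{N,T}=1+o_P(1)$ uniformly in $i$ on the localization event, $R$ is a difference of partial-sum averages of $\{\eta_s\}$ and so $R=O_P(T^{-1/2})$ by Assumption \ref{con-eta}, and each $E_i$ has variance $O(\sigma_i^2/T)$ by Assumption \ref{cond-1}(i). Squaring, summing over $i$, and using the Cauchy--Schwarz bound $\Sigma^2_{N,T}\le\Delta_{N,T}\Gamma_{N,T}$ together with Assumption \ref{cond-4}, every contribution except $\sum_ic_{N,T}^2\delta_i^2=(1+o_P(1))\Delta_{N,T}$ turns out to be $o_P(\Delta_{N,T})$, proving \eqref{d-cons}.

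The heart of the proof is (c). Write $S_i(t)-(t/T)S_i(T)=\delta_ir(t)+Y_i(t)$ with $r(t)=\max(0,t-t_0)-(t/T)(T-t_0)$ and $Y_i(t)=\gamma_i(V(t)-(t/T)V(T))+(W_i(t)-(t/T)W_i(T))$, where $V(t)=\sum_{s\le t}\eta_s$, $W_i(t)=\sum_{s\le t}e_{i,s}$. This gives
\begin{align*}
U_N(\hat t_{N,T}+v)-U_N(\hat t_{N,T})=\,&\Delta_{N,T}\bigl(r^2(\hat t_{N,T}+v)-r^2(\hat t_{N,T})\bigr)\\
&+2\bigl(r(\hat t_{N,T}+v)L(\hat t_{N,T}+v)-r(\hat t_{N,T})L(\hat t_{N,T})\bigr)+Q(v),
\end{align*}
with $L(t)=\sum_i\delta_iY_i(t)$ and $Q(v)=\sum_i\bigl(Y_i^2(\hat t_{N,T}+v)-Y_i^2(\hat t_{N,T})\bigr)$. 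Subtracting $\hat\Delta_{N,T}(\hat r^2_{N,T}(\hat t_{N,T}+v)-\hat r^2_{N,T})$ and using \eqref{d-cons} together with $r(\hat t_{N,T})=\hat r_{N,T}+O_P(|\hat t_{N,T}-t_0|)$, the dominant surviving piece is $2\hat r_{N,T}\bigl(L(\hat t_{N,T}+v)-L(\hat t_{N,T})\bigr)$. Squaring, dividing by $4|v|\hat r^2_{N,T}$, and averaging over the $2(M_2-M_1)$ lags produces a Bartlett-type estimator of the long-run variance of $L$, which equals $\Xi_{N,T}(1+o(1))$ uniformly in $M_1<|v|\le M_2$ by Assumptions \ref{b-1}(i), \ref{cond-1}(i), \ref{con-eta}, and \ref{cond-tau}(i). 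The cutoff $M_1\to\infty$ kills the short-range covariance bias, while the Ces\`aro averaging makes the variance of this spectral-type estimator $O(1/(M_2-M_1))\to 0$.

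The main obstacle is the uniform control, over $M_1<|v|\le M_2$, of the three remainder terms $Q(v)$, the signal-estimation slack $(\Delta_{N,T}-\hat\Delta_{N,T})(\hat r^2_{N,T}(\hat t_{N,T}+v)-\hat r^2_{N,T})$, and the drift mismatch $2(r(\hat t_{N,T}+v)-\hat r_{N,T})L(\hat t_{N,T}+v)$. For the first I plan to use a maximal $L^\kappa$ inequality built on Assumption \ref{cond-1}(ii) combined with the cross-sectional independence in Assumption \ref{b-1}(i) (and Assumption \ref{total} in the mid and strong factor cases); the second uses \eqref{d-cons} together with the explicit expansion $\hat r^2_{N,T}(\hat t_{N,T}+v)-\hat r^2_{N,T}=O(|v|T)$; the third uses a maximal inequality for $L$. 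The most delicate point is case (ii): because $r$ and $\hat r_{N,T}$ have kinks at the distinct points $t_0$ and $\hat t_{N,T}$, their squared difference picks up an extra $O(|v|\cdot|\hat t_{N,T}-t_0|)$ bias whenever the averaging window crosses $t_0$. The hypothesis $M_2=o(\Sigma^2_{N,T}/\Delta^2_{N,T})$ is precisely what the localization event requires to force the whole window to lie on one side of $t_0$ with probability tending to one, so that the kink never activates and the signal-error bound survives, yielding \eqref{xi-const}.
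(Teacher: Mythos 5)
Your overall architecture is the same as the paper's: part (b) is exactly the paper's uniform expansion of the split sample means over a window $|u|\le M=o(T)$ around $t_0$, and part (c) follows the paper's route of decomposing $U_N(\hat t+v)-U_N(\hat t)$ into the squared drift, the drift--noise cross terms, and the quadratic noise terms, killing the quadratic terms, reducing the cross terms to partial-sum increments, and proving a uniform law of large numbers for the normalized squared increments over $M_1<|v|\le M_2$ so that the Ces\`aro average converges to $\Xi_{N,T}$. That part of the plan is sound.

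The genuine gap is in your localization step (a) and in the use you make of it in case (ii). You claim an event of probability tending to one on which no index of $\{\hat t_{N,T}+v:\,M_1<|v|\le M_2\}$ lies on the opposite side of $t_0$ from $\hat t_{N,T}$. This event does \emph{not} have probability tending to one: under the hypotheses of Theorems \ref{w-fac}--\ref{w-fac-strong} the normalized difference $\hat t_{N,T}-t_0$ converges to a nondegenerate law that puts positive mass on every neighbourhood of the origin, so for each $\epsilon>0$ the event $\{|\hat t_{N,T}-t_0|\le \epsilon\cdot(\mbox{rate})\}$ has an asymptotically positive probability, while $M_2\to\infty$; hence the window of radius $M_2$ centred at $\hat t_{N,T}$ straddles $t_0$ with asymptotically positive (often full) probability. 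Consequently your argument that ``the kink never activates'' in case (ii) rests on an event that fails, and you have misidentified the role of the hypothesis $M_2=o(\Sigma^2_{N,T}/\Delta^2_{N,T})$: it is not there to keep the window one-sided. The correct treatment (as in the paper) bounds the drift mismatch directly: $r(\hat t_{N,T})-\hat r_{N,T}=O(|\hat t_{N,T}-t_0|)$ and $\bigl(r^2(\hat t_{N,T}+v)-r^2(\hat t_{N,T})\bigr)-\bigl(\hat r^2_{N,T}(\hat t_{N,T}+v)-\hat r^2_{N,T}\bigr)=O(|v|\,|\hat t_{N,T}-t_0|)$ regardless of which side of $t_0$ the points fall on, and this, together with the rate $|\hat t_{N,T}-t_0|=O_P(\Sigma^2_{N,T}/\Delta^2_{N,T})$ and $M_2=o(\min(T,\Sigma^2_{N,T}/\Delta^2_{N,T}))$, is what makes the kink bias and the signal-estimation slack $o_P\bigl((|v|\Xi_{N,T})^{1/2}|\hat r_{N,T}|\bigr)$ uniformly in $M_1<|v|\le M_2$; the same condition is also what lets you replace increments started at $\hat t_{N,T}$ by increments started at $t_0$. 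You should rewrite (a) as a bound on $|\hat t_{N,T}-t_0|$ alone, drop the one-sidedness claim, and rework the case (ii) remainder estimates accordingly.
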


Since  \eqref{d-cons} and \eqref{xi-const} hold, the limit result in \eqref{joint} remains  true when the norming is replaced with the corresponding estimators, i.e.
\beq\label{joint-est}
\frac{\hat{\Delta}^2_{N,T}}{\hat{\Xi}_{N,T}}(\hat{t}_{N,T}-t_0)\;\;\;\;\mbox{converges in distribution}.
\eeq

If the interaction between the panels is small. i.e.\ $\Sigma^2_{N,T}/\sum_{i=1}^N\sigma_i^2\delta_i^2\to 0$, as $N,T\to \infty$, then we need to estimate only $\Delta_{N,T}$ and  $\sum_{i=1}^N\sigma_i^2\delta_i^2$. Using $\hat{\sigma}^2_i, 1\leq i \leq N$, the long run variance estimators  for $\sigma_i^2$, a possible estimator for $\sum_{i=1}^N\sigma_i^2\delta_i^2$ is $\sum_{i=1}^N\hat{\sigma}_i^2[\sum_{1\leq s \leq \hat{t}_{N,T}}X_{i,s}/\hat{t}_{N,T}-\sum_{\hat{t}_{N,T}<s\leq T}X_{i,s}/(T-\hat{t}_{N,T})]^2$.

\section{Simulations, and data examples}\label{sec-simu}\setcounter{equation}{0}

\subsection{Simulations}\label{subsec-simu}
Using the estimators defined in Section \ref{subest}, we computed the empirical percentages when the variable defined in (3.5)  is below the asymptotic quantiles for several  $N$ and $T$. We considered the case when there is no interaction between the panels, i.e.\ $\gamma_i=0$ and also cases with  correlations. We tried various values of $t_0=\lf T\theta\rf$. We observed that the applicability of the limit results presented in Section \ref{sec-main} does not depend on $\theta$. In Tables \ref{tab-1} and \ref{tab-2} we used $\theta=1/2$ and they are based on 1,000 repetitions. The $90^{\mbox{th}}$, $95^{\mbox{th}}$ and $99^{\mbox{th}}$ percentiles of the distribution of
$\mbox{argmax}_{u}(W(u)-g_{1/2}(u))$ are $4.70$, $7.69$ and $15.89$, respectively.  Table \ref{tab-1} illustrates that $\Delta_{N,T}$ must be small to use Theorem \ref{w-fac}(a) and the approximation improves when $T$ increases. In case of larger $\Delta_{N,T}$ we are under the conditions of Theorem \ref{rate-con-1} and the distribution of $\hat{t}_{N,T}$ is more concentrated than we would get from the asymptotics in Theorem \ref{w-fac}(a).

\begin{table}[htdp]
\caption{Empirical percentages of ${\Delta^2_{N,T}}(\hat{t}_{N,T}-t_0)/{\Xi_{N,T}}$ below the asymptotic  quantiles  for  various values of  $N, T, \delta_i$ when $\gamma_i=0$ for all $1\leq i \leq N$.}
\begin{center}
\begin{tabular}{cc|ccc}
N/T  & $\delta_i$ 
 & 90\%   &  95\%   &99\%       \\
\hline
25/100 & 0.150 & 88.7\% & 94.7\% & 100\%\\
25/250 & 0.100 & 87.6\% & 94.4\% & 99.8\% \\
25/500 & 0.060 & 89.9\% & 96.4\% & 100\% \\
50/100& 0.100 &  89.1\%  &  97.1\%   &  100\%\\
50/250 & 0.070 &  88.1\%  &  95.4\%   &  100\%\\
50/500 & 0.050 & 86.5\%    & 94.9\%    & 100\%\\
100/100 & 0.085 & 86.1\%    &94.5\%   &  100\%\\
100/250 & 0.055 &  86.9\% &  94.3\%  & 99.9\%\\
100/500 & 0.035 & 88.5\%  &  96.5\%  & 100\%\\
\hline
\end{tabular}
\end{center}
\label{tab-1}
\end{table}%

\begin{table}[htdp]
\caption{Empirical percentages of ${\Delta^2_{N,T}}(\hat{t}_{N,T}-t_0)/{\Xi_{N,T}}$ below the asymptotic  quantiles  for  various values of  $N, T, \delta_i$ when $\gamma_i=0.03$ for all $1\leq i \leq N$.}
\begin{center}
\begin{tabular}{cc|ccc}
N/T  & $\delta_i$ 
 & 90\%   &  95\%   &    99\%       \\
\hline
25/100 & 0.150 & 86.9\% & 94.9\% & 100\% \\
25/250 & 0.100 & 90.7\% & 95.6\% & 100\% \\
25/500 & 0.060 & 89.2\% & 97.2\% & 100\% \\
50/100& 0.100 &  87.6\%  &  95.6\%   &  100\%\\
50/250 & 0.070 &  91.0\%  &  96.4\%   &  100\%\\
50/500 & 0.050 & 88.3\%	& 96.3\%	& 100\%\\
100/100 & 0.085 & 89.4\%	&95.9\%   &  100\%\\
100/250 & 0.055 &  85.1\% &  94.0\%  & 99.9\%\\
100/500 & 0.035 & 90.1\%  &  96.5\%  & 100\%\\
\hline
\end{tabular}
\end{center}
\label{tab-2}
\end{table}%

Our comments also  holds when interaction between the panels is allowed as illustrated by Table \ref{tab-2} but, due to the dependence, larger $T$ is needed to use limit results.\\

Figure \ref{sim12} shows  that the density function of the limit follows the shape of the histogram of $\hat{t}_{N,T}$ closely.
\begin{figure}
\centering
\caption{The histogram of $\hat{t}_{100,500}$ with $\delta_i=0.07, \gamma_i=0, 1\leq i\leq 100$ (left panel) and the histogram of $\hat{t}_{50, 100}$ with $\delta_i=0.1, \gamma_i=0.03, 1\leq i \leq 50$ (right panel) and the density of the limiting random variable}
\mbox{\subfigure{\includegraphics[width=3.3in]{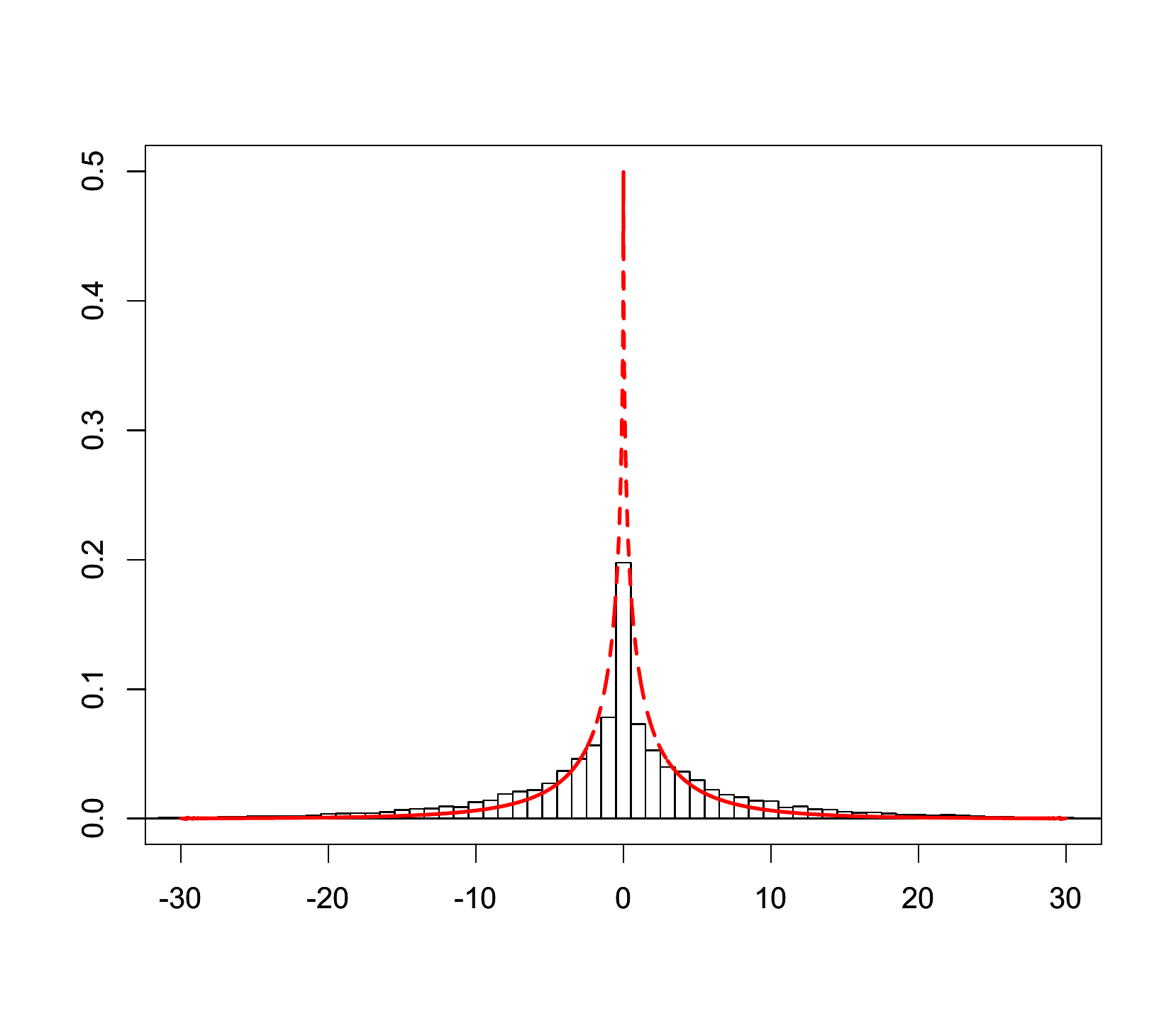}}\quad
\subfigure{\includegraphics[width=3.3in]{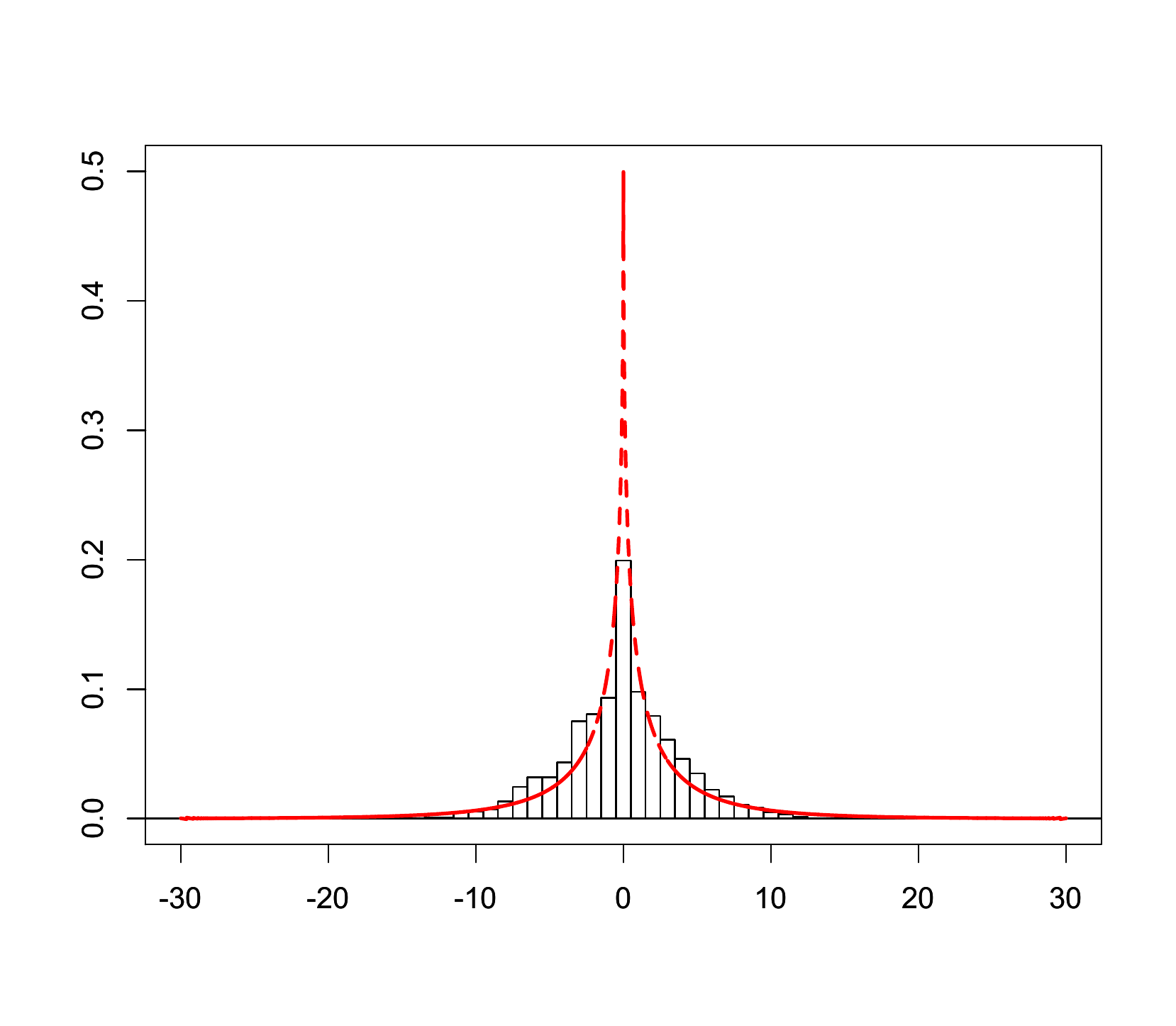} }}
\label{sim12}
\end{figure}

\subsection{Applications}\label{subsec-appl}
\begin{figure}
\centering
\caption{The graphs  exchange rates, 1=UK, 2=SI, 3=CA, 4=SW  (left panel); 1=DN, 2=NO, 3=SD (right panel) with respect to the US dollar.}
\mbox{\subfigure{\includegraphics[width=3.3in]{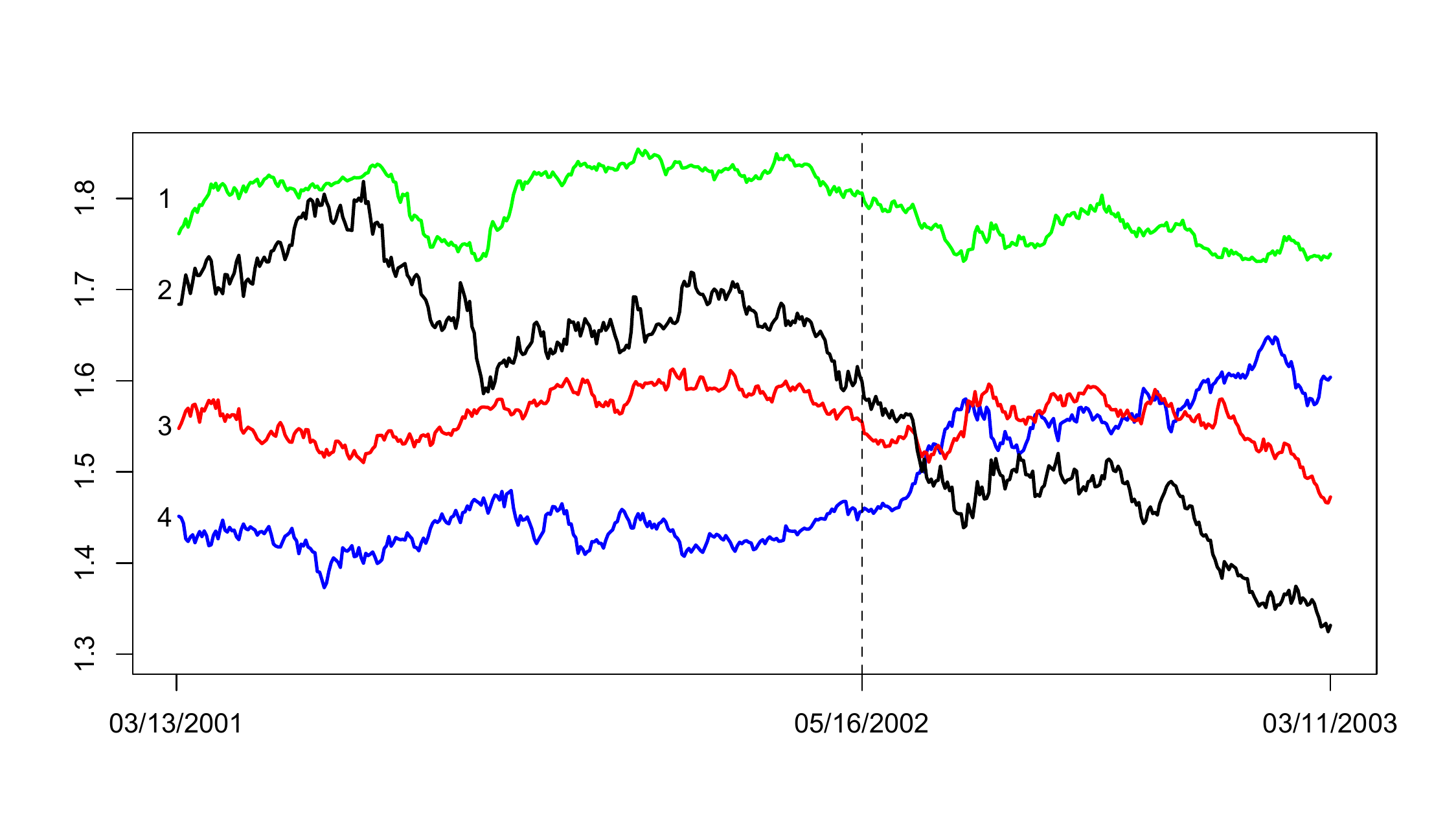}}\quad
\subfigure{\includegraphics[width=3.3in]{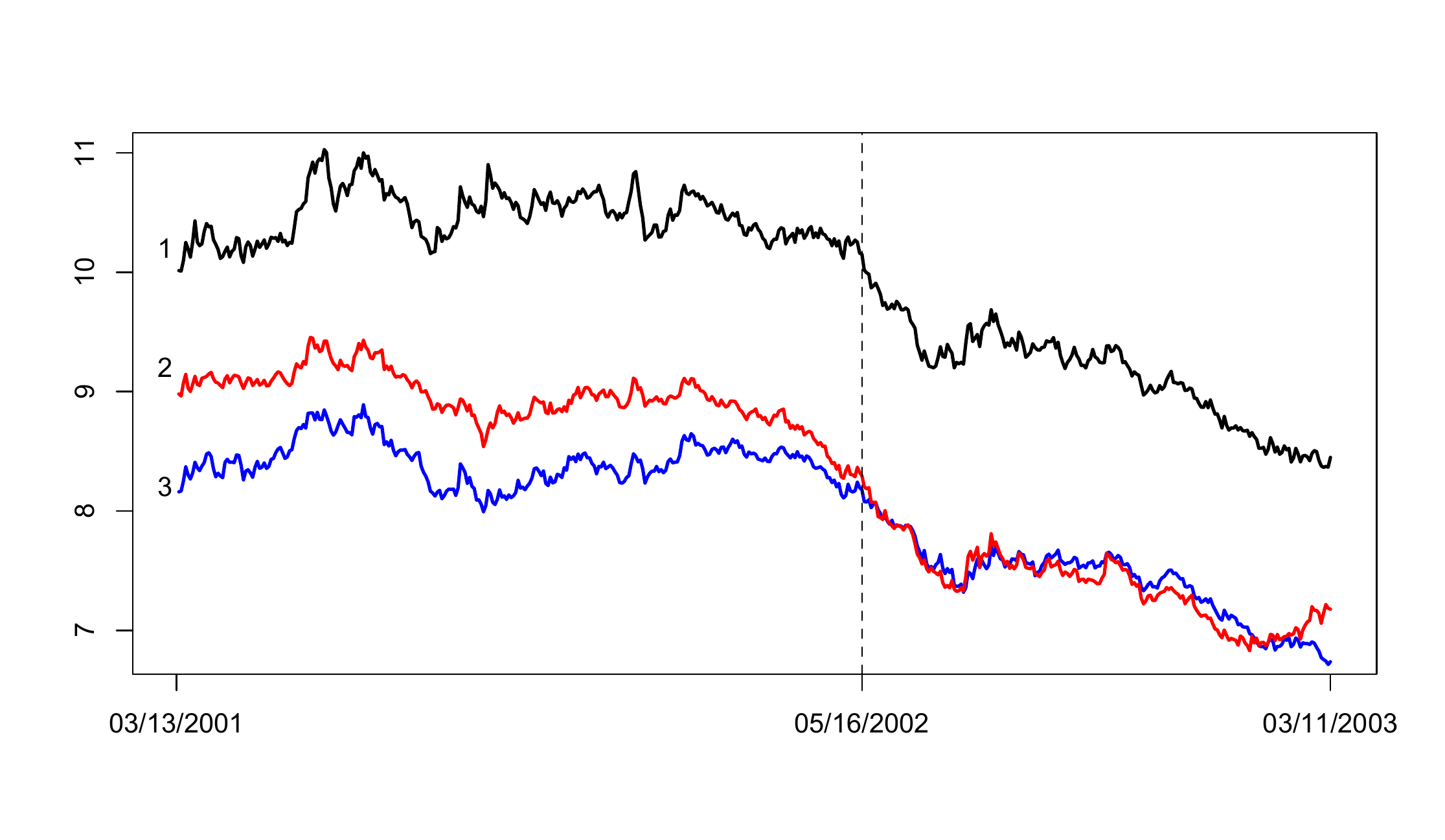} }}
\label{exam12}
\end{figure}

\begin{figure}
\centering
\caption{The graphs of relative exchange rates, 1=UK, 2=SI, 3=CA, 4=SW  (left panel); 1=DN, 2=NO, 3=SD (right panel) with respect to the US dollar.}
\mbox{\subfigure{\includegraphics[width=3.3in]{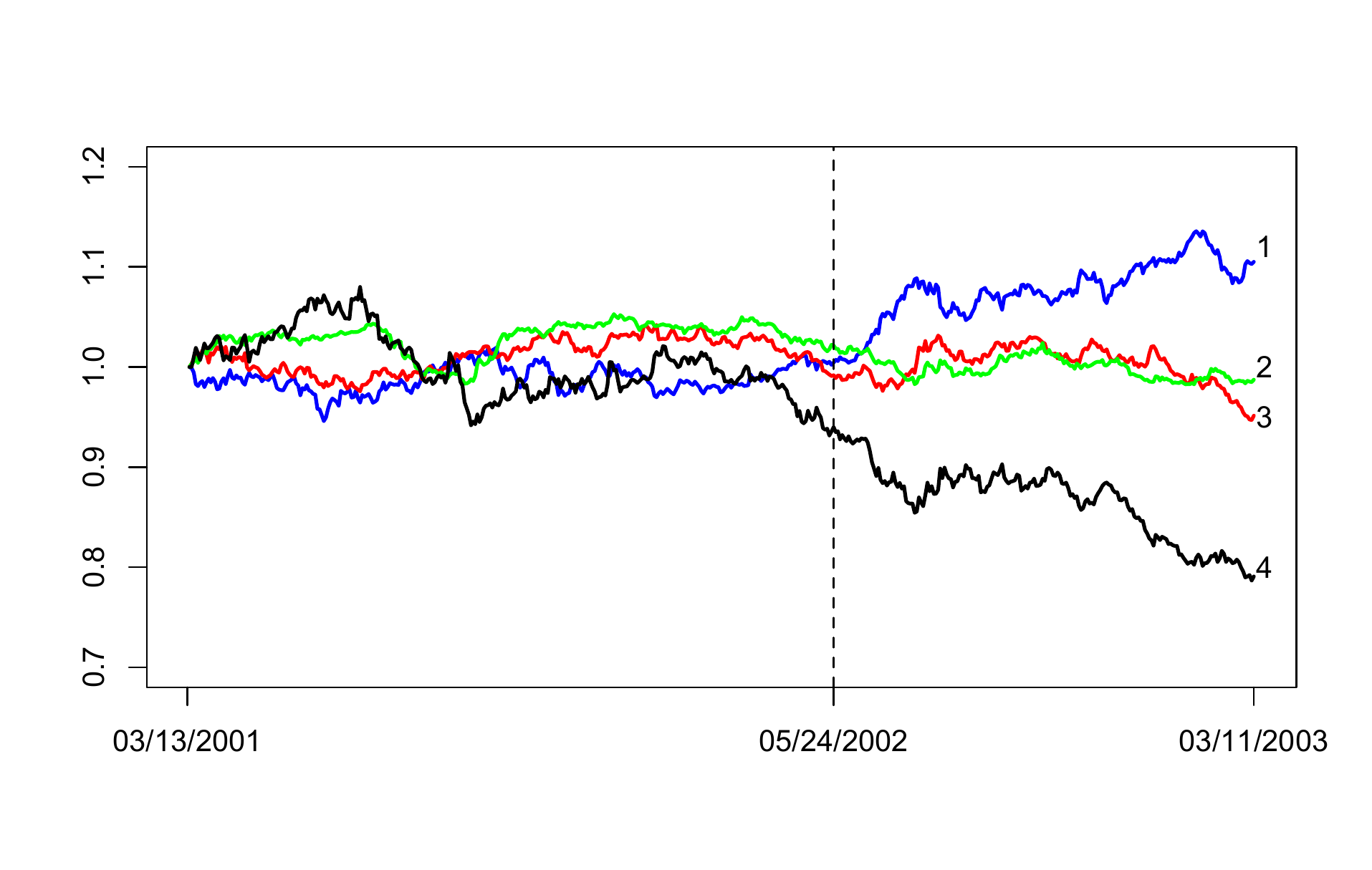}}\quad
\subfigure{\includegraphics[width=3.3in]{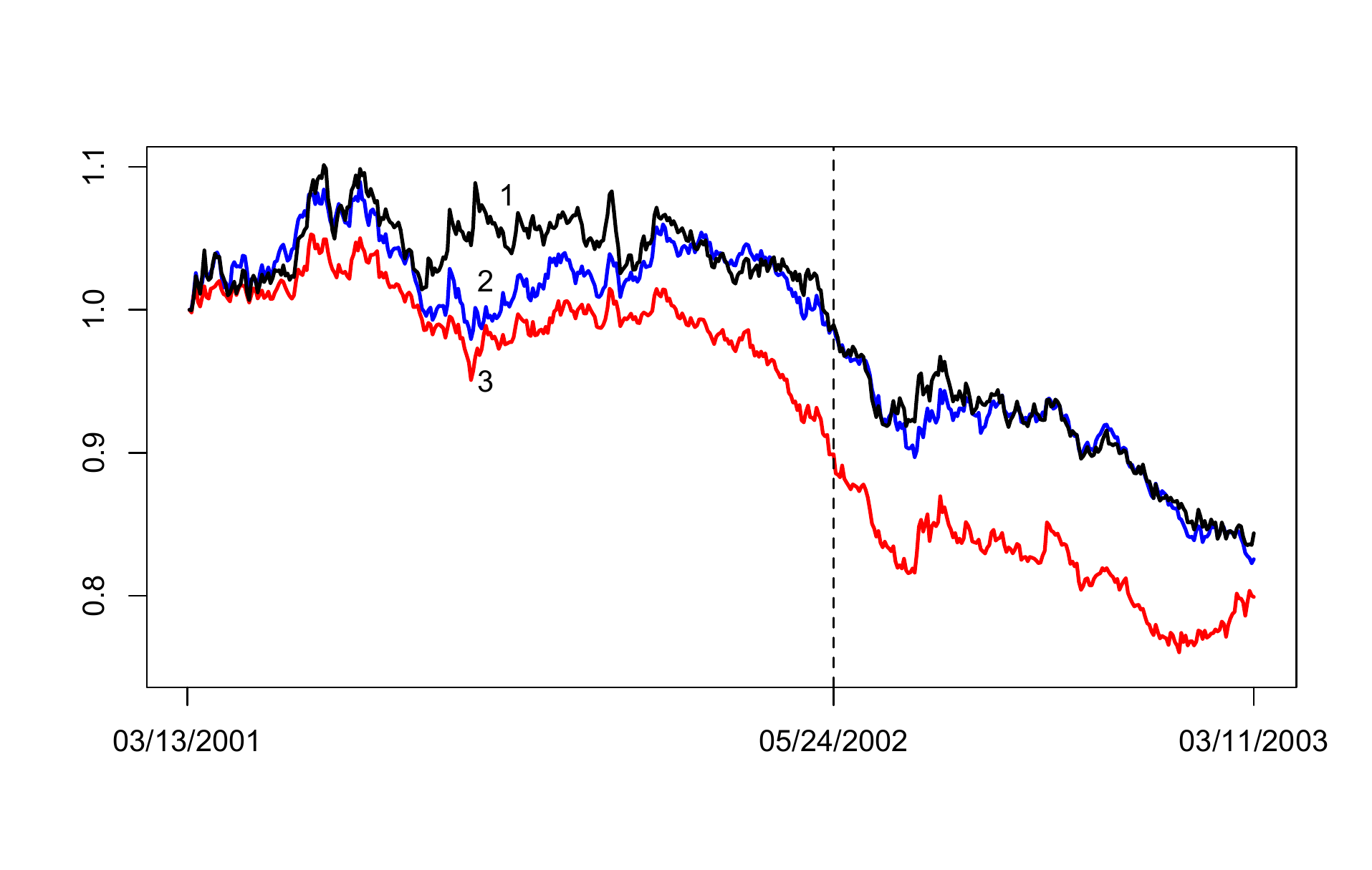} }}
\label{scaled-exam12}
\end{figure}

In the first example we consider the exchange rates between the US dollar and 23 other currencies. The data can be found at the website {{\tt www.federalreserve.gov\\
/releases/h10/hist/}}.  Figure \ref{exam12} contains the graphs of the exchange rates between the United Kingdom (UK), Canada (CA), Singapore (SI), Switzerland (SW), Denmark (DN), Norway (NO) and Sweden (SD). In our study we used the time period 03/13/2001--03/11/2003 so we have $N=23$ panels and each panel has $T=500$ observations. Using the testing method in Horv\'ath and Hu\v{s}kov\'a (2013) the no change in the mean of the panels null hypothesis is rejected. The estimated time of change is $\hat{t}_{23, 500}=297$ so the change is indicated  on 05/16/2002. We also constructed confidence intervals using  \eqref{joint-est}.  Since $\hat{\Delta}_{23, 500}$ is very large, the 90\%, 95\% and 99\% confidence intervals contain only a single  element, $\hat{t}_{23, 500}=297$, i.e.\ the conditions of Theorem \ref{rate-con-1} hold in this case. It is clear from Figure \ref{exam12} that the exchange rates are between 1.3 and 11, so if the same proportional change occurs   in a panel with high values, this change will give a very large $\delta^2$ compared to the other panels. Hence a single panel can disproportionately contribute to $\hat{\Delta}_{N,T}$. To overcome this problem we rescaled the observations in each panel with the first observation, i.e.\ with the exchange rate on 03/13/2001.  Figure \ref{scaled-exam12} contains the graphs of the relative changes  in exchange rates with respect to the US dollar for the same countries as in Figure \ref{exam12}. We repeated our analysis  for the relative changes (rescaled) in the exchange rates with respect to the US dollar, resulting in $\hat{t}_{23, 500}=303$ which corresponds to 05/24/2002. In the definition of $\hat{\Xi}_{23, 500}$ we used $M_2-M_1\approx \lf T^{1/2}/\hat{\Delta}_{23, 500}\rf$. The 90\%, 95\% and 99\% confidence intervals are $[292, 330], [287, 341]$ and $[274, 371]$. Note that all these confidence intervals contain 297 which was obtained for the non--scaled exchange rates. Some of the graphs show a linear trend after the time of change instead of changing to an other constant mean. The limit distribution of the time of change is local, i.e.\ it is determined by the observations in the neighborhood of $t_0$. Replacing the linear trend with an average value close to $t_0$ can justify the asymptotic validity of the confidence intervals.  Also, after the change point was found  and the  means of the corresponding segments were removed, the stationarity of the residuals could not be rejected.\\
 The exchange rates data and the scaled exchange rates  contain further level shifts. Using the binary segmentation method one can divide the data into homogenous segments and provide confidence intervals for the time of changes. Sato (2013) investigates the number and the location of the changes in daily log returns in 30 currency pairs between 04/01/2001 and 30/12/2011 and points out that the study of the individual pairs might not find all the changes in the exchange rates mechanism.  \\

\begin{figure}
\centering \caption{The graphs of the log differences of the GDP/capita for 113 countries between 1961 and 2012.}\label{GDPcurv}
\includegraphics[scale=.55, width=5.5in,angle=0]{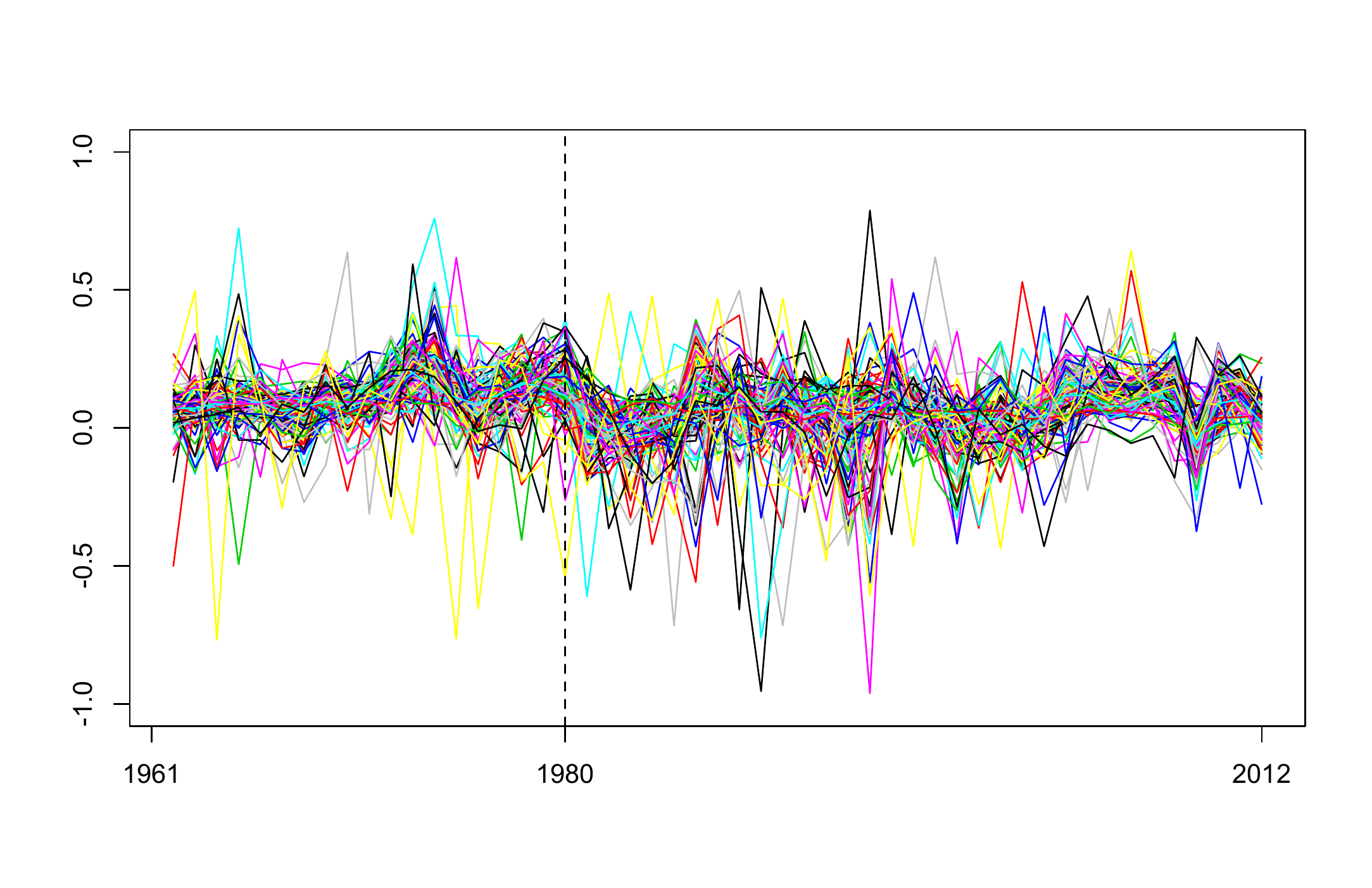}
\end{figure}

\medskip
In the second example we compared the GDP/capita of $N=113$ countries. The data can be found at the website {\tt www://data.worldbank.org/indicator/NY.GDP.MKTP.CD}.  The data are recorded in current US dollar. We removed some countries from the data set due to large number of missing values, so we used $N=113$ panels with $T=51$ covering the time period 1961--2012. To achieve stationarity of the errors we transformed the data by taking $\log$ differences. The graphs of the $\log$ transformed GDP's are exhibited in Figure \ref{GDPcurv}. We used the CUSUM test of Horv\'ath and Hu\v{s}kov\'a (2013) to test the stability of the means of the panels which was rejected at very high significance level. The estimator for the time of change is $\hat{t}_{113, 51}=19$ which corresponds to 1979/1980. Applying the limit result in \eqref{joint-est}, $[16, 21]$ is the asymptotic 90\% and 95\% confidence interval, while the 99\% confidence interval is $[12, 22]$.\\

\section{Conclusion}

We established the first and second order asymptotic properties of a CUSUM estimator of the time of change in the mean of panel data. Our results are derived under long run moment conditions, which serve to extend the asymptotic theory to a broader family of error processes than had been previously considered in the literature, and we provided an in depth study on how the rates of divergence of the sizes of changes and the common factor loadings are manifested in the asymptotic behavior of the test statistic. Our results were demonstrated with a Monte Carlo simulation study, and we considered application to two real data sets.

\subsection{Acknowledgements}
 We would like to thank three reviewers as well as the co--editor, Liangjun Su, for their helpful comments that lead to substantial improvements to this paper. This project was supported by NSF grant DMS 1305858 and grants GA\v{C}R 15-096635, GA\v{C}R 201/12/1277.

\bigskip

\noindent
{\bf REFERENCES}\\

\smallskip

\noindent
Andrews, D.\ W.\ K. (2003) End-of-sample instability tests. {\it Econometrica}{ 71}, 1661--1694.
Atak, A., O.\ Linton  \& Z.\ Xiao  (2011)  A semiparametric panel model for unbalanced data with application to climate change in the United Kingdom. {\it  Journal of Econometrics} { 164}, 92--115.\\
 Arellano, M. (2004) {\it Panel Data Econometrics.}   Oxford University Press.\\ 
 Aue, A.\ \&  L.\ Horv\'ath (2013) Structural breaks in time series. {\it Journal of Time Series Analysis} {23}, 1--16.\\
Bai, J. (2010)  Common breaks in means and variances for panel data.  {\it Journal of Econometrics}{ 157}, 78--92.\\
Bai, J.\ \&  J.L.\ Carrioni--i--Silvestre (2009) Structural changes, common stochastic trends, and unit roots in panel data.{\it The Review of Economic Studies} { 76}, 471--501.\\
Bai, Y.\ \& J.\ Zhang  (2010) Solving the Feldstein--Horioka puzzle with financial frictions. {\it Econometrica} { 78}, 603--632.\\
Baltagi, B.H.: {\it Econometric Analysis of Panel Data} 5th Edition, Wiley, New York, 2013.\\
 Baltagi, B.H.,\  C.\ Kao \&  L.\ Liu (2012) Estimation and identification of change points in panel models with nonstationary or stationary regressors and error terms. Preprint.\\
 Berkes, I.,\ S.\ H\"{o}rmann  \& J.\ Schauer  (2011). Split invariance principles for stationary processes. {\it The Annals of Probability} { 39}, 2441-2473.\\
 Berkes, I.,\ L.\ Horv\'ath \&  P.\ Kokoszka (2003)  GARCH processes: structure and estimation. {\it Bernoulli} { 9}, 201--227.\\
Bhattacharya, P.K.\ \&  P.J.\ Brockwell (1976) The minimum of an additive process with applications to
signal estimation and storage theory. {\it Zeitschrift f\"ur Wahrscheinlichkeitstheorie verwandte\ Gebiete} { 37}, 51--75.\\
Billingsley, P. (1968) {\it Convergence of Probability Measures.}
  Wiley.\\ 
Bradley, R.C.  (2007) {\it Introduction to Strong Mixing Conditions.} Vol.\ 1--3, Kendrick Press, Heber City, UT.\\
Brodsky, B.E.\ \& B.\ Darkhovskii (2000) {\it Non--Parametric Statistical Diagnosis.} Kluwer.\\ 
 Carrasco, M.\ \&  X.\ Chen (2002) Mixing and moment proprties of various GARCH and stochastic volatility models. {\it Econometric Theory} { 18}, 17--39.\\
 Chan, J.,\  L.\ Horv\'ath  \& M.\ Hu\v{s}kov\'a (2013)  Darling--Erd\H{os} limit results for change--point detetction in panel data. {\it Journal of Statistical Planning and Inference} { 143}, 955--970.\\
 Cs\"org\H{o}, M.\ \&  L.\ Horv\'ath (1997) {\it Limit Theorems in Change--Point Analysis.} Wiley. \\
 Cs\"org\H{o}, M.\ \&  P.\ R\'ev\'esz (1981) {\it Strong Approximations in Probability and Statistics.} Academic Press.\\ 
Dedecker, I.,\  P.\ Doukhan, G.\ Lang, J.R.R.\ Le\'on, S.\  Louhichi \& C.\ Prieur  (2007) {\it Weak Dependence with Examples and Applications.} Lecture Notes in Statistics, Springer, Berlin. \\
Ferger, D. (1994)  Change--point estimators in case of small disorders. {\it Journal of Statistical Planning and Inference} {40}, 33--49.\\
Francq, C.\ \& J.-M.\ Zakoian  (2010) {\it GARCH Models.} Wiley.\\ 
Frees, E.W. (2004) {\it Longitudinal and Panel Data: Analysis and Applications in the Social Sciences.}  Cambridge University Press.\\ 
 He, C.\ \&  T.\ Ter\"{a}svirta (1999) Properties of moments of a family of GARCH processes. {\it Journal of Econometrics} { 92}(1999), 173--192.\\
H\"ormann, S. (2008)  Augmented GARCH sequences: dependence structure and asymptotics. {\it Bernoulli}  {14}(2008), 543--561.\\
Horv\'ath, L.\ \&  M.\  Hu\v{s}kov\'a  (2012)  Change--point detection in panel data. {\it Journal of Time Series Analysis} {33}, 631--648.\\
 Horv\'ath, L.\ \&  G.\ Rice (2014)  Extensions of some classical methods in change point analysis (with discussions)   {\it Test} { 23}, 219--290.\\
 Hsiao, C. (2003)  {\it Analysis of Panel Data.} Second Edition. Cambridge University Press. \\
Hsiao, C. (2007)  Panel data analysis--advantages and challenges. {\it Test} {16}, 1--22.\\
Im, K.S.,  J.\ Lee \&  M.\ Tieslau (2005) Panel LM unit root test with level shifts. {\it Oxford Bulletin of Economics and Statistics} { 67}, 393--419.\\
Joseph, L.\ \& D.B.\ Wolfson  (1992) Estimation in multi--path change--point problems. {\it Communications in Statistics--Theory and Methods} { 21}, 897--913.\\
Joseph, L.\ \& D.B.\  Wolfson (1993)  Maximum likelihood estimation in the multi--path changepoint problem. {\it Annals of the Institute of Statistical Mathematics} { 45} , 511--530.\\
Kao, C.,\  L.\ Trapani \&  G.\ Urga (2012) Asymptotics for panel models with common shocks. {\it Econometric Reviews} { 31}, 390--439.\\
Kao, C.,\  L.\ Trapani \&  G.\ Urga (2014) Testing for breaks in cointegrated panels. {\it Econometric Reviews} To appear.\\
 Kim, D. (2011)  Estimating a common deterministic time trend break in large panels with cross sectional dependence {\it Journal of Econometrics} {164}, 310--330.\\
 Kim, D. (2014)  Common breaks in time trends for large panel data with a factor structure. {\it Econometrics Journal} In press.\\ 
Li, D.,\ J.\ Qian, \& L.\ Su (2014) Panel Data Models with Interactive Fixed Effects and Multiple Structrual Breaks. {\it Under Revision}\\
Qian, J.,\ \& L.\ Su (2014) Shrinkage Esimation of Common Breaks in Panel Data Models via Adaptive Group Fused Lasso. Working Paper, Singapore Management University.
Li, Y. (2003) A martingale inequality and large deviations. {\it Statistics \& Probability Letters} {62}, 317--321.\\
Ling, S.\ \&  M.\ McAleer (2002)  Necessary and sufficient moment conditions for GARCH(r,s) and asymmetric GARCH(r,s) model. {\it Econometric Theory} { 18}, 722--729.\\
Merlev\'ede, F.,\ M.\ Peligrad  \&  S.\ Utev  (2006)  Recent advances in invariance principles for stationary sequences. {Probability Surveys} { 3}, 1--36.\\
 M\'oricz, F.\ \, R.\ Serfling  \&  W.\ Stout (1982)  Moment and probability bounds with quasi-superadditive structure for the maximal partial sum. {\it Annals of Probability} {10}, 1032--1040.\\
 Nelson, D.B.  (1990)  Stationarity and persistance in the GARCH(1,1) model. {\it Econometric Theory} {6}, 318--334.\\
Petrov, V.V. (1995)  {\it Limit Theorems of Probability Theory}, Clarendon Press.\\ 
 Phillips, P.C.P.\ \&  V.\ Solo (1992)  Asymptotics    for linear processes. {\it Annals of Statistics}  {20}, 971--1001.\\
Qian, J.\ \& L.\ Su (2014) Shrinkage Estimation of Common Breaks in Panel Data Models via Adaptive Group Fused Lasso. {\it Under Revision} \\
Sato, A-H. (2013)  Recursive segmentation procedure based on the Akaike information criterion test. {\it 2013 IEEE 37th Annual Computer Software and Applications Conference}, 226--233.\\
Wooldridge, J.M. (2010) {\it Econometric Analysis of Cross Section and Panel Data.}  Second Edition, MIT Press.\\ 
Wu, W.B. (2002) Central limit theorems for functionals of linear processes and their applications. {\it Statistica Sinica} {12}, 635--649.\\

\appendix
\section{Proofs of Theorems \ref{rate-con-1}--\ref{w-fac-strong} and Remarks \ref{rem-bai} and \ref{rem-bai-cor}}\label{proof}\setcounter{equation}{0}
Throughout the proofs $c$ denotes unimportant constants whose values might change from line to line.
Using \eqref{model} we have
$$
S_i(t)-\frac{t}{T}S_i(T)=Q_i(t)+\gamma_iV(t)+\delta_ir(t),
$$
where
\beq\label{q-v-def}
Q_i(t)=\sum_{s=1}^te_{i,s}-\frac{t}{T}\sum_{s=1}^Te_{i,s},\;\;\;\;\;V(t)=\sum_{s=1}^t\eta_s-\frac{t}{T}\sum_{s=1}^T\eta_s
\eeq
and
\beq\label{r-def}
 r(t)=-t\frac{T-t_0}{T}+(t-t_0)I\{t>t_0\}.
\eeq
Hence  we have
\begin{align}\label{summa}
\left(S_i(t)-\frac{t}{T}S_i(T)\right)^2&=Q_i^2(t)+\gamma_i^2V^2(t)+\delta_i^2r^2(t)+2V(t)\gamma_iQ_i(t)+
2r(t)\delta_iQ_i(t)\\
&\hspace{1cm}+2V(t)r(t)\gamma_i\delta_i.\notag
\end{align}
 Let $0<\alpha<\theta<1-\alpha$ and define
$$
\tilde{t}_{N,T}(\alpha)=\mbox{argmax}_{\lfloor T\alpha \rfloor \leq t \leq T-\lfloor T\alpha \rfloor}
\sum_{i=1}^N\left(S_i(t)-\frac{t}{T}S_i(T)\right)^2.
$$

\medskip
\begin{lemma}\label{first} If Assumptions \ref{b-1}--\ref{cond-4} are satisfied, then we have
\beq\label{ff-10}
\lim_{\min(N,T)\to \infty}P\{\hat{t}_{N,T}=\tilde{t}_{N,T}(\alpha)\}=1\;\;\;\mbox{for all}\;\;\;0<\alpha<\theta<1-\alpha
\eeq
and
\beq\label{ff-2}
\frac{\hat{t}_{N,T}}{T}\;\;\stackrel{P}{\to}\;\;\theta\;\;\;\mbox{as}\;\;\min(T,N)\to \infty
\eeq
\end{lemma}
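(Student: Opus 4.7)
\textbf{Proof proposal for Lemma \ref{first}.}

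The plan is to show that the deterministic drift $\Delta_{N,T}r^2(t)$, which is maximized at $t=t_0$, dominates all stochastic fluctuations uniformly in $t$. First, I would use the decomposition \eqref{summa} to write
\begin{align*}
U_N(t)=\sum_{i=1}^N Q_i^2(t)+\Gamma_{N,T}V^2(t)+\Delta_{N,T}r^2(t)+2V(t)\sum_{i=1}^N\gamma_i Q_i(t)+2r(t)\sum_{i=1}^N\delta_i Q_i(t)+2V(t)r(t)\Sigma_{N,T}.
\end{align*}
A direct calculation from \eqref{r-def} gives $r^2(t)=(t/T)^2(T-t_0)^2$ for $t\le t_0$ and $r^2(t)=(t_0/T)^2(T-t)^2$ for $t\ge t_0$, so $r^2$ is a tent function peaked at $t_0$ with $r^2(t_0)\sim T^2\theta^2(1-\theta)^2$. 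In particular, for any $0<\alpha<\theta<1-\alpha$ there exists $c(\alpha,\theta)>0$ such that
\[
r^2(t_0)-r^2(t)\ge c(\alpha,\theta)\,T^2\quad\mbox{uniformly for }t\notin[\lfloor T\alpha\rfloor,T-\lfloor T\alpha\rfloor].
\]

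Next, I would bound the remaining terms of $U_N(t)-U_N(t_0)$ uniformly in $t$. Using Assumption \ref{b-1}(i), Assumption \ref{cond-1} together with a Moricz--Serfling--Stout maximal inequality applied to the stationary sequences $e_{i,t}$ and to the independent-across-$i$ sums $\xi_t=\sum_i\delta_i e_{i,t}$ and $\zeta_t=\sum_i\gamma_i e_{i,t}$, I would establish the order bounds
\[
\sup_{1\le t<T}\sum_{i=1}^N Q_i^2(t)=O_P(NT),\quad
\sup_{1\le t<T}\Bigl|\sum_{i=1}^N\delta_i Q_i(t)\Bigr|=O_P\bigl((T\Delta_{N,T})^{1/2}(\log T)\bigr),
\]
and analogous bounds with $\gamma_i$ and $\Gamma_{N,T}$; Assumption \ref{con-eta} yields $\sup_t V^2(t)=O_P(T)$. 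Combining these with $|r(t)|\le T$ and $|\Sigma_{N,T}|\le(\Delta_{N,T}\Gamma_{N,T})^{1/2}$ (Cauchy--Schwarz), each of the noise terms is seen to be $o_P(T^2\Delta_{N,T})$: the $A_N$--term by Assumption \ref{cond-4}(i) ($NT=o(T^2\Delta_{N,T})$), the $V^2$ and cross terms by Assumption \ref{cond-4}(ii) ($\Gamma_{N,T}=o((T\Delta_{N,T})^{1/2})$), and the $C_N$--term by $T\Delta_{N,T}\to\infty$. Therefore
\[
\sup_{t\notin[\lfloor T\alpha\rfloor,T-\lfloor T\alpha\rfloor]}\bigl[U_N(t)-U_N(t_0)\bigr]\le -c(\alpha,\theta)T^2\Delta_{N,T}+o_P(T^2\Delta_{N,T})<0
\]
with probability tending to $1$. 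Since $t_0\in[\lfloor T\alpha\rfloor,T-\lfloor T\alpha\rfloor]$, this forces $\hat t_{N,T}=\tilde t_{N,T}(\alpha)$ with probability tending to $1$, proving \eqref{ff-10}.

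For \eqref{ff-2}, I would rerun the same argument on the shrunken exclusion set $\{t:|t-t_0|>\varepsilon T\}\cap[\lfloor T\alpha\rfloor,T-\lfloor T\alpha\rfloor]$ with $\alpha<\min(\theta-\varepsilon,1-\theta-\varepsilon)$. An elementary computation using the explicit form of $r^2$ shows $r^2(t_0)-r^2(t)\ge c(\alpha,\theta)\varepsilon T^2$ on this set, after which the same noise bounds give $U_N(t)<U_N(t_0)$ uniformly with probability tending to $1$. Together with \eqref{ff-10}, this gives $|\hat t_{N,T}-t_0|\le \varepsilon T$ in probability, hence $\hat t_{N,T}/T\stackrel{P}{\to}\theta$.

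The main obstacle is obtaining the \emph{uniform} (in $t$) order bounds on $\sup_t|\sum_i\delta_i Q_i(t)|$ and $\sup_t\sum_i Q_i^2(t)$ under only the long-run moment hypotheses of Assumption \ref{cond-1}; this is where the Moricz--Serfling--Stout quasi-superadditivity inequality and the independence across panels (Assumption \ref{b-1}(i)) do the key work, and where one must be careful that the logarithmic losses do not overwhelm the gap $c(\alpha,\theta)T^2\Delta_{N,T}$, which is guaranteed by Assumption \ref{cond-4}.
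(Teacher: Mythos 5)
Your proposal follows essentially the same route as the paper: decompose $U_N(t)$ via \eqref{summa}, exploit that $\Delta_{N,T}r^2(t)$ is a tent function whose peak at $t_0$ dominates, and bound every stochastic term uniformly in $t$ so that it is $o_P(T^2\Delta_{N,T})$ under Assumption \ref{cond-4}; the paper then phrases the conclusion as convergence of the normalized maxima over the middle interval and the two tails to $\theta^2(1-\theta)^2$, $\alpha^2(1-\theta)^2$ and $\alpha^2\theta^2$, which is equivalent to your direct comparison $U_N(t)<U_N(t_0)$. One point needs tightening: your bound $\sup_t|\sum_i\delta_i Q_i(t)|=O_P((T\Delta_{N,T})^{1/2}\log T)$ carries a spurious logarithm that is \emph{not} automatically absorbed by Assumption \ref{cond-4}(i), since $T\Delta_{N,T}/N\to\infty$ with $N\to\infty$ guarantees $T\Delta_{N,T}\to\infty$ but not $T\Delta_{N,T}/(\log T)^2\to\infty$; the paper avoids the log entirely by using the $\kappa$-th moment ($\kappa>4$) of Assumption \ref{cond-1}(ii) in a Rosenthal-plus-tightness argument in ${\mathcal D}[0,1]$ (Billingsley (1968, pp.\ 95, 127)), yielding the clean bound $O_P(T^{1/2}\Delta_{N,T}^{1/2})$, and you should do likewise (the M\'oricz--Serfling--Stout inequality applied with exponent $\kappa/2>2$ rather than $2$ also gives the log-free bound).
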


\begin{proof}
It is easy to see that for every $0<\alpha<\theta<1-\alpha$
$$
\frac{1}{T^2}\max_{\lfloor T\alpha \rfloor \leq t \leq T- \lfloor T\alpha \rfloor}r^2(t)\;\;\to \;\;\theta^2(1-\theta)^2,
$$
$$
\frac{1}{T^2}\max_{1\leq t\leq \lfloor T\alpha \rfloor }r^2(t)\;\;\to \;\;\alpha^2(1-\theta)^2,
$$
$$
\frac{1}{T^2}\max_{T-\lfloor T\alpha \rfloor \leq t \leq  T}r^2(t)\;\;\to \;\;\alpha^2\theta^2.
$$
We prove that
\beq\label{fp-0}
\sup_{1\leq t \leq T}\sum_{i=1}^N Q_i^2(t)=O_P(NT).
\eeq
Elementary arguments give
$$
EQ_i^2(t)\leq U_{i,2}(t)+ 2\left\{U_{i,2}(t)U_{i,2}(T)    \right\}^{1/2} +U_{i,2}(T)
$$
and therefore by Assumption \ref{cond-1}(i) we have
$$
\max_{1\leq t \leq T}\sum_{i=1}^NEQ_i^2(t)=O(NT).
$$
Let $q_i(u)=(Q_i^2(uT)-EQ_i^2(uT))/T$. Using Assumption \ref{b-1}(i), for every $0\leq u <v\leq 1$ by the Rosenthal inequality (cf.\ Petrov (1995,\ p.\ 59)) we have with $\nu=\kappa/2$
\begin{align*}
E\left| \sum_{i=1}^N(q_i(v)-q_i(u))\right|^{\nu}\leq c \left\{\sum_{i=1}^NE|q_i(v)-q_i(u)|^\nu +\left(\sum_{i=1}^NE(q_i(v)-q_i(u))^2
\right)^{\nu/2}
\right\}.
\end{align*}
By the Cauchy--Schwarz inequality we conclude for all $1\leq s\leq t\leq T$ that
\begin{align*}
E(Q_i^2(t)-Q_i^2(s))^2&\leq E\{(Q_i(t)-Q_i(s))^2(|Q_i(t)|+|Q_i(s)|)^2\}\\
&\leq 4(E(Q_i(t)-Q_i(s))^4)^{1/2}(EQ_i^4(t)+EQ_i^4(s))^{1/2}.
\end{align*}
Using the definition of $Q_i(t)$ and Assumption \ref{b-1}(ii) we get that
\begin{align*}
E(Q_i(t)-Q_i(s))^4\leq 2^3\left\{U_{i,4}(t-s)+\left(\frac{t-s}{T}\right)^4U_{i,4}(T)
\right\}.
\end{align*}
and similarly
$$
EQ_i^4(t)\leq 2^3\left( U_{i,4}(t)+U_{i,4}(T)  \right).
$$
Also,
\begin{align*}
(EQ_i^2(t)-EQ_i^2(s))^2&\leq 2E(Q_i(t)-Q_i(s))^2(EQ_i^2(t)+EQ_i^2(s))\\
&\leq 8\left(  U_{i,2}(t-s)+\left(\frac{t-s}{T}\right)^2U_{i,2}(T)   \right)\left(U_{i,2}(t)+U_{i,2}(s)+2U_{i,2}(T)
\right).
\end{align*}
Thus applying Assumption \ref{cond-1}(ii)  we get that with some $0<c<\infty$
$$
\left(\frac{1}{N}\sum_{i=1}^NE(q_i(v)-q_i(u))^2\right)^{\nu/2}\leq c|u-v|^{\nu/2}\;\;\mbox{for all}\;\;0\leq u<v\leq 1.
$$
Repeating the arguments used above we obtain that
\begin{align*}
E|Q_i^2(t)-Q_i^2(s)|^\nu&\leq E\{|Q_i(t)-Q_i(s)|^\nu(|Q_i(t)|+|Q_i(s)|)^\nu\}\\
&\leq 2^\nu(E|Q_i(t)-Q_i(s)|^{2\nu})^{1/2}(E|Q_i(t)|^{2\nu}+E|Q_i(s)|^{2\nu})^{1/2},
\end{align*}
\begin{align*}
E|Q_i(t)-Q_i(s)|^{2\nu}\leq 2^{2\nu}\left\{U_{i,2\nu}(t-s)+\left(\frac{t-s}{T}\right)^{2\nu}U_{i,2\nu}(T)
\right\},
\end{align*}
$$
E|Q_i(t)|^{2\nu}\leq 2^{2\nu}\left( U_{i,2\nu}(t)+U_{i,2\nu}(T)  \right)
$$
and
\begin{align*}
|EQ_i^2(t)-EQ_i^2(s)|^{2\nu}&\leq E|Q_i(t)-Q_i(s)|^\nu(E|Q_i(t)|^\nu+E|Q_i(s)|^\nu)\\
&\leq 2^{2\nu} \left(  U_{i,\nu}(t-s)+\left(\frac{t-s}{T}\right)^\nu U_{i,\nu}(T)   \right)\left(U_{i,\nu}(t)+U_{i,\nu}(s)+2U_{i,\nu}(T)
\right)
\end{align*}
resulting in
$$
\frac{1}{N}\sum_{i=1}^NE|q_i(u)-q_i(v)|^\nu\leq c|u-v|^{\nu/2}\;\;\;\mbox{for all  } 0\leq u,v\leq 1
$$
with some $c$. Using Billingsley (1968, pp.\ 95 and 127) we conclude that the process $\sum_{i=1}^Nq_i(u)/N$ is tight in
${\mathcal D}[0,1]$ and therefore \eqref{fp-0} holds.\\

The moment assumption in  Assumption \ref{con-eta} with the maximal inequality of M\'oritz et al.\ (1982) yields that $E(\max_{1\leq t \leq T}|V(t)|)^{\bar{\kappa}}=O(T^{\bar{\kappa}})$ and therefore by Markov's inequality we conclude
\beq\label{fp-2a}
\max_{1\leq t \leq T}|V(t)|=O_P(T^{1/2}).
\eeq
By \eqref{fp-2a} we get immediately that
\beq\label{fp-2}
\sup_{1\leq t \leq T}\sum_{i=1}^N\gamma_i^2V^2(t)=O_P(1)T\Gamma_{N,T}.
\eeq
Following the proof of \eqref{fp-0} we get
$$
\sup_{1\leq t \leq T}\left| \sum_{i=1}^N\gamma_iQ_i(t)  \right|=O_P(1)T^{1/2}\Gamma_{N,T}^{1/2}
$$
and therefore by \eqref{fp-2a}
\beq\label{fp-5}
\sup_{1\leq t \leq T}\left| \sum_{i=1}^N V(t)\gamma_iQ_i(t)   \right|=O_P(1) T\Gamma_{N,T}^{1/2} .
\eeq
Similarly to \eqref{fp-5} we have that
\beq\label{fp-6}
\sup_{1\leq t \leq T}\left| \sum_{i=1}^N r(t)\delta_iQ_i(t)   \right|=O_P(1) T^{3/2}\Delta_{N,T}^{1/2} .
\eeq
Using again Assumption \ref{con-eta}  and the definition of $r(t)$, one can easily verify that
\beq\label{fp-7}
\max_{1\leq t\leq T}\left|\sum_{i=1}^NV(t)r(t)\gamma_i\delta_i   \right|=O_P(1)T^{3/2}\left|\Sigma_{N,T}   \right|.
\eeq
It  follows from \eqref{fp-0}--\eqref{fp-7} that
$$
\frac{1}{T^2\Delta_{N,T}}\max_{1\leq t \leq T}\sum_{i=1}^N\left(S_i(t)-\frac{t}{T}S_i(T)\right)^2\;\;\stackrel{P}{\to}\;\;\theta^2(1-\theta)^2,
$$
$$
\frac{1}{T^2\Delta_{N,T}}\max_{1\leq t \leq \lfloor \alpha T\rfloor}\sum_{i=1}^N\left(S_i(t)-\frac{t}{T}S_i(T)\right)^2\;\;\stackrel{P}{\to}\;\;\alpha^2(1-\theta)^2
$$
and
$$
\frac{1}{T^2\Delta_{N,T}}\max_{T-\lfloor \alpha T\rfloor \leq t \leq T}\sum_{i=1}^N\left(S_i(t)-\frac{t}{T}S_i(T)\right)^2\;\;\stackrel{P}{\to}\;\;\alpha^2\theta^2,
$$
which immediately implies Lemma \ref{first}.
\end{proof}

\medskip
According to \eqref{ff-10}, it is enough to consider the asymptotic behavior of
\begin{align*}
\hat{t}_{N,T}&=
\mbox{argmax}_{\lf \alpha T\rf\leq t \leq \lf (1-\alpha)T\rf}\left\{U_N(t)-U_N(t_0)\right\},
\end{align*}
with any $0<\alpha<\theta<1-\alpha<1$, where $U_N(t)$ is defined in \eqref{u-def}.
It is easy to see that
\begin{align}\label{deco}
U_N(t)&-U_N(t_0)\\
&=\sum_{i=1}^N\biggl\{\delta_i^2(r^2(t)-r^2(t_0))+Q_i^2(t)-Q_i^2(t_0)+\gamma_i^2(V^2(t)-V^2(t_0)) \notag\\
&\hspace{1cm}+2\gamma_i(Q_i(t)V(t)
-Q_i(t_0)V(t_0))+
2\delta_i(r(t)Q_i(t)-r(t_0)Q_i(t_0))\notag\\
&\hspace{1cm}+2\gamma_i\delta_i(V(t)r(t)-V(t_0)r(t_0))\biggl\}.\notag
\end{align}

\begin{lemma}\label{tt-0} If Assumption \ref{t-0} holds, then for all $0<\alpha<\theta<1-\alpha$ there are $0<c_1, c_2<\infty$ such that
\begin{align*}
-c_1|t_0-t|T\leq r^2(t)-r^2(t_0)\leq -c_2|t_0-t|T
\end{align*}
for all  $1\leq t \leq T$. 
\end{lemma}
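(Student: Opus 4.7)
The plan is to prove the lemma by explicit computation of $r(t)$ in the two regions $t \le t_0$ and $t > t_0$, factoring $r^2(t)-r^2(t_0)$ as $|t-t_0|$ times an expression whose size is controlled by Assumption \ref{t-0}.

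First I would rewrite the definition of $r(t)$ in piecewise form. For $t \le t_0$ we simply have $r(t)=-t(T-t_0)/T$, while for $t > t_0$ a one-line rearrangement gives $r(t)=-t_0(T-t)/T$. In particular $r(t_0)=-t_0(T-t_0)/T$ in both cases. Using the identity $a^2-b^2=(a-b)(a+b)$ (or just squaring directly) yields
\begin{align*}
r^2(t)-r^2(t_0)&=-(t_0-t)(t+t_0)\frac{(T-t_0)^2}{T^2}\quad\text{for }t\le t_0,\\
r^2(t)-r^2(t_0)&=-(t-t_0)(2T-t-t_0)\frac{t_0^2}{T^2}\quad\text{for }t>t_0,
\end{align*}
and both right-hand sides are $\le 0$, which is the correct sign.

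Next I would read off the upper bound $r^2(t)-r^2(t_0)\ge -c_1|t_0-t|T$. In the first case $(t+t_0)(T-t_0)^2/T^2\le 2T\cdot 1=2T$ since $t+t_0\le 2T$, and in the second $(2T-t-t_0)t_0^2/T^2\le 2T$ since $t_0\le T$, so one may take $c_1=2$. For the lower bound $r^2(t)-r^2(t_0)\le -c_2|t_0-t|T$, note that $t+t_0\ge t_0$ and $2T-t-t_0\ge T-t_0$, so both quotients are bounded below by $\min\{t_0(T-t_0)^2/T^2,\, t_0^2(T-t_0)/T^2\}$.

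Finally I would use Assumption \ref{t-0}, which gives $t_0/T\to\theta$ and $(T-t_0)/T\to 1-\theta$; since $0<\alpha<\theta<1-\alpha$, for $T$ large enough we have $t_0/T\ge\alpha$ and $(T-t_0)/T\ge\alpha$, hence the two minima above are each bounded below by $\alpha^3 T$. This yields $c_2=\alpha^3$ (for all $T$ sufficiently large, and one can absorb the finitely many small $T$ into a smaller constant). The only mildly delicate point is tracking that the lower bound must be uniform over $t\in[1,T]$ rather than merely over $t$ in a neighborhood of $t_0$; this is handled by the factorizations above, where the ``slope'' factor involves only $t_0$ and $T$, not $t$, after the crude bounds $t+t_0\ge t_0$ and $2T-t-t_0\ge T-t_0$ are applied. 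There is no genuine obstacle here — the lemma is a deterministic fact about a piecewise-linear function, and the proof is a bookkeeping exercise once the piecewise formula for $r(t)$ is in hand.
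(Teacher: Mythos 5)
Your proof is correct and follows exactly the route the paper intends: the paper's own proof is a one-line appeal to the definition of $r(t)$ and Assumption \ref{t-0}, and your explicit piecewise computation, factorization of $r^2(t)-r^2(t_0)$, and use of $t_0/T\to\theta$, $(T-t_0)/T\to 1-\theta$ with $\alpha<\theta<1-\alpha$ is precisely the bookkeeping being left to the reader. The constants $c_1=2$ and $c_2=\alpha^3$ (for large $T$) check out.
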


\begin{proof}
 The result follows from Assumption \ref{t-0} and the definition of $r(t)$.
\end{proof}

Throughout Lemmas \ref{qq-0}--\ref{ga-de} we assume that $1\leq M\leq T$.

\medskip

\begin{lemma}\label{qq-0} If Assumptions \ref{b-1}--\ref{cond-1} hold, then
\begin{align}\label{qq-1}
\max_{|t-t_0|\geq M}\frac{1}{|t-t_0|}\left|\sum_{i=1}^N[Q_i^2(t)-Q_i^2(t_0)]\right|=O_P(N+T^{2/\kappa}N^{1/2}+ (NT/M)^{1/2}).
\end{align}
\end{lemma}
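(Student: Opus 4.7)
The strategy is to exploit the factorization $Q_i^2(t)-Q_i^2(t_0)=(Q_i(t)-Q_i(t_0))(Q_i(t)+Q_i(t_0))$. Writing $k=|t-t_0|$ and
\[
R_i(t):=Q_i(t)-Q_i(t_0)=\operatorname{sgn}(t-t_0)\sum_{s\in I(t_0,t)}e_{i,s}-\frac{t-t_0}{T}\sum_{s=1}^T e_{i,s},
\]
where $I(t_0,t)$ denotes the interval between $t_0$ and $t$, one obtains
\[
\sum_{i=1}^N[Q_i^2(t)-Q_i^2(t_0)]=\sum_{i=1}^N R_i^2(t)+2\sum_{i=1}^N R_i(t)Q_i(t_0).
\]
Since the panels are independent across $i$ by Assumption \ref{b-1}(i), each inner sum is a sum of independent summands to which Rosenthal's inequality with exponent $\nu=\kappa/2>2$ can be applied. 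The three competing scales $N$, $T^{2/\kappa}N^{1/2}$, $(NT/M)^{1/2}$ in the stated bound will come, respectively, from the means, the centred square, and the centred cross term.

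The mean contribution is routine: Assumption \ref{cond-1}(i) yields $\sum_iER_i^2(t)=O(Nk)$ and a direct expansion gives $|\sum_iE[R_i(t)Q_i(t_0)]|=O(Nk)$, contributing $O(N)$ after dividing by $k$. For the centred square I combine Rosenthal with the Cauchy--Schwarz consequence of Assumption \ref{cond-1}(ii), $\sum_iU_{i,\kappa}(k)=O(Nk^{\kappa/2})$, and with the Lyapunov bound $U_{i,4}(k)\le(U_{i,\kappa}(k))^{4/\kappa}$ combined with a power-mean inequality, $\sum_iU_{i,4}(k)=O(Nk^2)$, to obtain
\[
E\Bigl|\sum_i[R_i^2(t)-ER_i^2(t)]\Bigr|^{\kappa/2}\le c\bigl[Nk^{\kappa/2}+(Nk^2)^{\kappa/4}\bigr]=O(N^{\kappa/4}k^{\kappa/2}).
\]
Dividing by $k^{\kappa/2}$ renders the $\kappa/2$-moment uniformly $O(N^{\kappa/4})$ in $t$, and Markov's inequality followed by a union bound over the at most $T$ admissible values of $t$ delivers the uniform estimate $O_P(T^{2/\kappa}N^{1/2})$.

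The cross term is attacked analogously. A Cauchy--Schwarz variance bound gives $\operatorname{Var}(\sum_iR_i(t)Q_i(t_0))\le(\sum_iER_i^4(t))^{1/2}(\sum_iEQ_i^4(t_0))^{1/2}=O(NkT)$, where the extra factor of $T^{1/2}$, coming from $EQ_i^4(t_0)=O(T^2)$, is precisely what dictates the $(NT/M)^{1/2}$ scale. For each fixed $t$ this already gives $k^{-1}|\sum_iR_i(t)Q_i(t_0)-E[\,\cdot\,]|=O_P((NT/k)^{1/2})\le O_P((NT/M)^{1/2})$. To lift the pointwise bound to a uniform one I would split $\{k\geq M\}$ into dyadic blocks $\{k\in[2^jM,2^{j+1}M]\}$, decompose $Q_i(t_0)$ into its pre-$t_0$ and post-$t_0$ contributions so that $k\mapsto\sum_iR_i(t)Q_i(t_0)$ becomes, modulo a linear-in-$k$ drift that is bounded trivially by Cauchy--Schwarz, a weighted partial-sum process in $k$, and apply a Moricz--Serfling--Stout-type maximal inequality to each block. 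Because $\kappa>4$, the $\kappa/2$-th moment bounds on the blocks decay geometrically in $j$, so only the block $k\sim M$ dominates and the overall rate is $(NT/M)^{1/2}$.

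The main obstacle is exactly the passage from pointwise to uniform in the cross term. A naive union bound applied to the $\kappa/2$-moment inflates the pointwise rate by a factor $M^{2/\kappa}$ (essentially because the pointwise moment itself scales as $k^{-\kappa/4}$, and $\sum_{k\ge M}k^{-\kappa/4}=O(M^{1-\kappa/4})$), so only a genuine maximal inequality along the dyadic chain can recover the claimed bound. A secondary technical point is that $Q_i(t_0)$ is measurable with respect to the full $\sigma$-algebra $\sigma(e_{i,s}:s\le T)$ rather than the forward filtration of the partial sum $k\mapsto\sum_{s=t_0+1}^{t_0+k}e_{i,s}$, which forces the preliminary split of $Q_i(t_0)$ about $t_0$ before the maximal inequality becomes directly applicable.
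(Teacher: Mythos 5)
Your proposal is correct and follows essentially the same route as the paper: the identity $Q_i^2(t)-Q_i^2(t_0)=(Q_i(t)-Q_i(t_0))^2+2Q_i(t_0)(Q_i(t)-Q_i(t_0))$, a union bound plus Rosenthal at exponent $\nu=\kappa/2$ for the centred square (giving $T^{2/\kappa}N^{1/2}$), and a dyadic-block chaining with the M\'oricz--Serfling--Stout maximal inequality for the cross term (giving $(NT/M)^{1/2}$). The only inessential difference is your proposed split of $Q_i(t_0)$ about $t_0$: since the M\'oricz-type inequality is purely moment-based (requiring only a superadditive bound on $E|\sum_{s=u}^{v}\zeta_i(s)|^{\nu}$, which Cauchy--Schwarz and Assumption \ref{cond-1}(ii) supply directly), no adaptedness is needed and the paper applies it to $\zeta_i(s)=Q_i(t_0)e_{i,s}-E[Q_i(t_0)e_{i,s}]$ without that decomposition.
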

\begin{proof} We write
$$
Q_i^2(t)-Q_i^2(t_0)=2Q_i(t_0)(Q_i(t)-Q_i(t_0))+(Q_i(t)-Q_i(t_0))^2.
$$
Using Assumption \ref{cond-1}(i) we get that
\beq\label{pq-1}
\sup_{1\leq t \leq T}\sum_{i=1}^N\frac{|EQ_i^2(t)-EQ_i^2(t_0)|}{|t-t_0|}=O(N).
\eeq
Let $\chi_i(t)=(Q_i(t)-Q_i(t_0))^2-E(Q_i(t)-Q_i(t_0))^2$.  Elementary arguments give
\begin{align}\label{ref-1}
P&\left\{\max_{1\leq t\leq T}\frac{1}{|t-t_0|}\left|\sum_{i=1}^N\chi_i(t)
\right|\geq x\right\}\\
&\hspace{.5 cm}=
P\left\{\left|\sum_{i=1}^N\chi_i(t)
\right|\geq x|t-t_0|\;\;\mbox{for at least one  } 1\leq t\leq T\right\}\notag\\
&\hspace{.5 cm}\leq
\sum_{1\leq t \leq T}P\left\{\left|\sum_{i=1}^N\chi_i(t)
\right|\geq x|t-t_0|
\right\}\notag\\
&\hspace{.5 cm}\leq
\sum_{1\leq t\leq T}
(x|t-t_0|)^{-\nu}E\left|\sum_{i=1}^N\chi_i(t)
\right|^\nu,\notag
\end{align}
where in the last step we used Markov's inequality. Let $\nu=\kappa/2$, where $\kappa$ is given in Assumption \ref{cond-1}(ii). The processes $\chi_i(t)$ are independent in $i$, so using the Rosenthal inequality (cf.\ Petrov (1995, p.\ 59)) we conclude with some $c>0$, not depending on $t$,
\begin{align}\label{ref-2}
E&\left|\sum_{i=1}^N\chi_i(t)
\right|^\nu
 \leq c\Biggl\{\sum_{i=1}^NE\left|\chi_i(t)
\right|^\nu
+\left(\sum_{i=1}^N E\chi_i^2(t)
\right)^{\nu /2}
\Biggl\}.\notag
\end{align}
Assumptions \ref{b-1}(i) and \ref{cond-1}(ii)  yield
\beq\label{ref-3}
\sum_{i=1}^NE\left|\chi_i(t)
\right|^\nu \leq c N|t-t_0|^\nu
\eeq
and
\beq\label{ref-4}
\left(\sum_{i=1}^NE\chi^2_i(t)
\right)^{\nu /2}\leq c N^{\nu/2}|t-t_0|^\nu.
\eeq
Thus we conclude via \eqref{ref-1}--\eqref{ref-4}
\begin{align*}
P&\left\{\max_{1\leq t \leq T}\frac{1}{|t-t_0|}\left|\sum_{i=1}^N\chi_i(t)
\right|\geq x\right\}\leq c
\sum_{1\leq t \leq T}
\frac{N^{\nu/2}|t-t_0|^\nu}{(x|t-t_0|)^{\nu}}
\leq \frac{c}{x^{\nu}}TN^{\nu/2}.
\end{align*}
Choosing $x=c_* N^{1/2}T^{1/\nu}$ with a large enough $c_*$ we get that
$$
\max_{1\leq t \leq T}\frac{1}{|t-t_0|}\left|\sum_{i=1}^N\chi_i(t)\right|=O_P(1)T^{1/\nu}N^{1/2}.
$$
Let
\beq\label{crlS}
\calS_i(t)=\sum_{s=1}^t e_{i,s}.
\eeq
It follows from the definition of $Q_i(t)$ that
\begin{align*}
&\left|\sum_{i=1}^N (Q_i(t_0)(Q_i(t)-Q_i(t_0))-E[Q_i(t_0)(Q_i(t)-Q_i(t_0))])\right|\\
&\hspace{1 cm}\leq \left|\sum_{i=1}^N( Q_i(t_0)(\calS_i(t_0)-\calS_i(t))-E[Q_i(t_0)(\calS_i(t_0)-\calS_i(t))])\right|\\
&\hspace{1.9cm}+\frac{|t-t_0|}{T}\left|\sum_{i=1}^N(Q_i(t_0)\calS_i(T)-E[Q_i(t_0)\calS_i(T)])
\right|.
\end{align*}
Using Assumption \ref{b-1}(i), and \ref{cond-1}(ii) with the Cauchy--Schwarz inequality we get that
\begin{align*}
\mbox{var}\left(\sum_{i=1}^NQ_i(t_0)\calS_i(T)\right)=\sum_{i=1}^N\mbox{var}\left(Q_i(t_0)\calS_i(T)\right)\leq \sum_{i=1}^N(EQ^4_i(t_0)E\calS^4_i(T))^{1/2}
=O(NT^2)
\end{align*}
and therefore
$$
\left|\sum_{i=1}^N(Q_i(t_0)\calS_i(T)-E[Q_i(t_0)\calS_i(T)])\right|=O_P(N^{1/2}T).
$$
With $\zeta_i(t)=Q_i(t_0)e_{i,t}-E[Q_i(t_0)e_{i,t}]$ we can write for $1\leq t \leq t_0$ that
\begin{align*}
\sum_{i=1}^N( Q_i(t_0)(\calS_i(t_0)-\calS_i(t))-E[Q_i(t_0)(\calS_i(t_0)-\calS_i(t))])=\sum_{s=t+1}^{t_0}\sum_{i=1}^N\zeta_i(s).
\end{align*}
By the Markov inequality we have
\begin{align}\label{mo-q}
P&\left\{\max_{1\leq t\leq t_0-M}\frac{1}{t_0-t}\left|\sum_{s=t+1}^{t_0}\sum_{i=1}^N\zeta_i(s)
\right|\geq x(NT/M)^{1/2}\right\}\\
&\hspace{.5 cm}\leq P\left\{\max_{\log M\leq k\leq \log t_0}\max_{e^k\leq \ell \leq e^{k+1}}\frac{1}{\ell}\left|\sum_{s=t_0-\ell}^{t_0}\sum_{i=1}^N\zeta_i(s)
\right|\geq x(NT/M)^{1/2}
\right\}  \notag\\
&\hspace{.5 cm}\leq P\biggl\{\max_{e^k\leq \ell \leq e^{k+1}}\frac{1}{\ell}\left|\sum_{s=t_0-\ell}^{t_0}\sum_{i=1}^N\zeta_i(s)
\right|\geq x(NT/M)^{1/2}\notag\\
&\hspace{3 cm}\mbox{for at least one    } \log M\leq k\leq \log t_0
\biggl\}  \notag\\
&\hspace{.5 cm}\leq\sum_{k=\log M}^{\log t_0}
P\left\{\max_{e^k\leq \ell \leq e^{k+1}}\left|\sum_{s=t_0-\ell}^{t_0}\sum_{i=1}^N\zeta_i(s)
\right|\geq x(NT/M)^{1/2}e^{k}
\right\}\notag\\
&\hspace{.5 cm}\leq(x(NT/M)^{1/2})^{-\nu}\sum_{k=\log M}^{\log t_0}e^{-k\nu}E\max_{e^k\leq \ell \leq e^{k+1}}\left|\sum_{s=t_0-\ell}^{t_0}\sum_{i=1}^N\zeta_i(s)
\right|^\nu.\notag
\end{align}
Next we need a maximal inequality for  double sum in the last term above.  With $\bar{\zeta}_i(s)= \zeta_i(t_0-s+1)  $ we get that
$$
\sum_{s=t_0-\ell+1}^{t_0}\sum_{i=1}^N\zeta_i(s)=\sum_{s=1}^\ell\sum_{i=1}^N\bar{\zeta}_i(s).
$$
By the independence of the processes $\bar{\zeta}_i(s)$ in  $i$,  Rosenthal's inequality (cf.\ Petrov (1995,\ p.59)) implies  that
\begin{align}\label{rosie}
E\left|\sum_{i=1}^N\sum_{s=u}^v\bar{\zeta}_i(s)\right|^\nu\leq c\left\{ \sum_{i=1}^NE\left| \sum_{s=u}^v\bar{\zeta}_i(s)\right|^\nu
+\left(  \sum_{i=1}^NE\left( \sum_{s=u}^v\bar{\zeta}_i(s)\right)^2 \right)^{\nu/2}\right\}.
\end{align}
We have via the Cauchy--Schwarz inequality
$$
E\left( \sum_{s=u}^v\bar{\zeta}_i(s)\right)^2\leq E\left(Q_i(t_0)\sum_{s=u}^ve_{i,s}\right)^2\leq \left(EQ_i^4(t_0)\right)^{1/2}\biggl( E\biggl(\sum_{s=u}^ve_{i,s}
\biggl)^4\biggl)^{1/2}
$$
and therefore
\begin{align*}
\sum_{i=1}^NE\left( \sum_{s=u}^v\bar{\zeta}_i(s)\right)^2&\leq \sum_{i=1}^N\left(EQ_i^4(t_0)\right)^{1/2}\biggl( E\biggl(\sum_{s=u}^ve_{i,s}
\biggl)^4\biggl)^{1/2}\\
&\leq \left\{\sum_{i=1}^NEQ_i^4(t_0)\sum_{i=1}^NE\left(\sum_{s=u}^ve_{i,s}
\right)^4\right\}^{1/2}.
\end{align*}
Using that the $e_{i,t}$'s have mean zero and Assumption \ref{cond-1}, we conclude
$$
\sum_{i=1}^NE\left( \sum_{s=u}^v\bar{\zeta}_i(s)\right)^2\leq c NT|u-v|.
$$
Similarly, by the definition of $\bar{\zeta}_i$ we get
\begin{align*}
E\left| \sum_{s=u}^v\bar{\zeta}_i(s)\right|^\nu\leq 2^\nu\left\{E\left|Q_i(t_0)\sum_{s=u}^ve_{i,s}\right|^\nu+
\left|E\left[Q_i(t_0)\sum_{s=u}^ve_{i,s}\right]\right|^\nu
\right\},
\end{align*}
and by applications of the Cauchy--Schwarz inequality we have
$$
\left|E\left[Q_i(t_0)\sum_{s=u}^ve_{i,s}\right]\right|\leq \left\{EQ_i^2(t_0)E\left[\sum_{s=u}^ve_{i,s}\right]^2\right\}^{1/2},
$$
$$
E\left|Q_i(t_0)\sum_{s=u}^ve_{i,s}\right|^\nu\leq \left\{E|Q_i(t_0)|^{2\nu}E\left|\sum_{s=u}^ve_{i,s}\right|^{2\nu}\right\}^{1/2}
$$
resulting in
\begin{align*}
\sum_{i=1}^NE\left| \sum_{s=u}^v\bar{\zeta}_i(s)\right|^\nu\leq c N T^{\nu/2}|u-v|^{\nu/2}.
\end{align*}
Using the inequalities above, we get the upper bound for the moment in \eqref{rosie}:
\begin{align}\label{rosie-2}
E\left|\sum_{s=u}^v\sum_{i=1}^N\bar{\zeta}_i(s)\right|^\nu\leq cN^{\nu/2}T^{\nu/2}|u-v|^{\nu/2}.
\end{align}
Applying the maximal inequality in M\'oritz et al.\ (1982) to \eqref{rosie-2} we conclude
$$
E\max_{e^{k}\leq \ell\leq e^{k+1}}\left|\sum_{s=1}^\ell\sum_{i=1}^N\bar{\zeta}_i(s)\right|^\nu\leq c N^{\nu/2}T^{\nu/2}e^{k\nu/2}.
$$
Hence \eqref{mo-q} implies that
\begin{align*}
P\left\{\max_{1\leq t\leq t_0-M}\frac{1}{t_0-t}\left|\sum_{s=t+1}^{t_0}\sum_{i=1}^N\zeta_i(s)
\right|\geq x(NT/M)^{1/2}\right\}&\leq \frac{c}{x^{\nu}(NT/M)^{\nu/2}}\sum_{k=\log M}^{\infty}e^{-k\nu/2}N^{\nu/2}T^{\nu/2}\\
&\leq \frac{c}{x^\nu},
\end{align*}
resulting in
$$
\max_{1\leq t\leq t_0-M}\frac{1}{t_0-t}\left|\sum_{s=t+1}^{t_0}\sum_{i=1}^N\zeta_i(s)
\right|=O_P(1) (NT/M)^{1/2}.
$$
Similar arguments yield
$$
\max_{t_0+M\leq t\leq T}\frac{1}{t_0-t}\left|\sum_{s=t+1}^{t_0}\sum_{i=1}^N\zeta_i(s)
\right|=O_P(1) (NT/M)^{1/2},
$$
which completes the proof of the lemma.
\end{proof}

\medskip

\begin{lemma}\label{r-q} If Assumptions \ref{b-1}--\ref{cond-1} hold, then
$$
\max_{|t-t_0|\geq M} \frac{1}{|t-t_0|} \left|\sum_{i=1}^N\delta_i(r(t)Q_i(t)-r(t_0)Q_i(t_0))\right|=O_P(1)(T^{1/2}\Delta_{N,T}^{1/2}+ T(\Delta_{N,T}/M)^{1/2}).
$$
\end{lemma}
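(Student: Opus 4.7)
The plan is to mimic the decomposition strategy used in Lemma A.3, splitting the summand in a way that separates the deterministic factor $r(t)$ from the random partial sums of the $e_{i,s}$'s. Writing
\begin{align*}
r(t)Q_i(t)-r(t_0)Q_i(t_0)=r(t)\bigl(Q_i(t)-Q_i(t_0)\bigr)+\bigl(r(t)-r(t_0)\bigr)Q_i(t_0),
\end{align*}
the stated maximum breaks into two pieces. The ``boundary'' piece
$\bigl(r(t)-r(t_0)\bigr)\sum_{i=1}^N \delta_iQ_i(t_0)$ is easy: a direct calculation from the definition of $r(t)$ in \eqref{r-def} gives $|r(t)-r(t_0)|\leq 2|t-t_0|$, while by Assumption \ref{b-1}(i) (independence across $i$) together with Assumption \ref{cond-1}(i),
\begin{align*}
\mbox{var}\Biggl(\sum_{i=1}^N\delta_iQ_i(t_0)\Biggr)=\sum_{i=1}^N\delta_i^2\mbox{var}(Q_i(t_0))=O(T\Delta_{N,T}),
\end{align*}
so by Chebyshev this piece, after dividing by $|t-t_0|$, is uniformly $O_P((T\Delta_{N,T})^{1/2})$.

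For the main piece $r(t)\sum_{i=1}^N\delta_i(Q_i(t)-Q_i(t_0))$, I would bound $|r(t)|\leq T$ uniformly and decompose
\begin{align*}
Q_i(t)-Q_i(t_0)=\bigl(\calS_i(t)-\calS_i(t_0)\bigr)-\frac{t-t_0}{T}\calS_i(T),
\end{align*}
with $\calS_i$ as in \eqref{crlS}. The second summand is handled exactly as above: after multiplication by $r(t)$ and division by $|t-t_0|$ the $(t-t_0)/T$ factor cancels, leaving $O_P(1)\sum_{i=1}^N\delta_i\calS_i(T)=O_P((T\Delta_{N,T})^{1/2})$ by the same variance computation on $\calS_i(T)$.

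The real work is the uniform bound on
\begin{align*}
\max_{|t-t_0|\geq M}\frac{1}{|t-t_0|}\Biggl|\sum_{i=1}^N\delta_i\bigl(\calS_i(t)-\calS_i(t_0)\bigr)\Biggr|=\max_{|t-t_0|\geq M}\frac{1}{|t-t_0|}\Biggl|\sum_{s}\xi_s\Biggr|,\qquad \xi_s=\sum_{i=1}^N\delta_ie_{i,s},
\end{align*}
where the inner sum runs over $s\in(t_0,t]$ or $s\in(t,t_0]$. Thanks to the independence of the panels and Assumption \ref{cond-1}(i), the block $E(\sum_{s=u+1}^{v}\xi_s)^2=\sum_{i=1}^N\delta_i^2U_{i,2}(v-u)\leq c\Delta_{N,T}(v-u)$, which is a quasi-superadditive structure. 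I would then follow the dyadic argument used in the proof of Lemma A.3: partition $[M,T]$ into blocks $[e^k,e^{k+1}]$, apply the M\'oricz--Serfling--Stout maximal inequality inside each block to get $E\max_{e^k\leq\ell\leq e^{k+1}}(\sum_{s=t_0+1}^{t_0+\ell}\xi_s)^2\leq c\Delta_{N,T}e^k$, then Chebyshev and a geometric sum over $k\geq \log M$, which yields the uniform bound $O_P((\Delta_{N,T}/M)^{1/2})$. Multiplying by the uniform factor $|r(t)|\leq T$ produces the $T(\Delta_{N,T}/M)^{1/2}$ summand. The (essentially identical) estimate for $t<t_0$ follows by running time backward, completing the proof.

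The main obstacle is the last step: obtaining the $1/|t-t_0|$-uniformity in the maximum. Only a second moment bound is available for the block sums of $\xi_s$ (the $\xi_s$ are dependent in $s$), so a direct Rosenthal estimate as in the proof of Lemma A.3 is unavailable, and the dyadic decomposition combined with the quasi-superadditive maximal inequality is what delivers the correct rate $(\Delta_{N,T}/M)^{1/2}$.
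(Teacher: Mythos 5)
Your decomposition and the treatment of the two ``easy'' pieces (the term $(r(t)-r(t_0))\sum_i\delta_iQ_i(t_0)$ and the term coming from $\frac{t-t_0}{T}\calS_i(T)$) coincide with the paper's proof and are fine. The gap is in the last, crucial step. With $\xi_s=\sum_{i=1}^N\delta_ie_{i,s}$ you propose to control $E\max_{e^k\le\ell\le e^{k+1}}\bigl(\sum_{s\le\ell}\xi_s\bigr)^2$ by the M\'oricz--Serfling--Stout inequality using \emph{only} the second--moment bound $E\bigl(\sum_{s=u+1}^{v}\xi_s\bigr)^2\le c\Delta_{N,T}(v-u)$. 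That is exactly the critical case $\alpha=1$ of their theorem (moment of order $\nu=2$ bounded by the \emph{first} power of a superadditive function of the block length), which is excluded from the clean conclusion: in that regime the best general maximal inequality is of Rademacher--Menshov type and carries an extra $(\log(\mbox{block length}))^2$ factor. Propagating that factor through your Chebyshev-plus-geometric-sum argument leaves you with $O_P\bigl(\log M\,(\Delta_{N,T}/M)^{1/2}\bigr)$ for the key term rather than $O_P\bigl((\Delta_{N,T}/M)^{1/2}\bigr)$, so the stated rate is not obtained.

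The fix is the point you explicitly talk yourself out of: Rosenthal's inequality \emph{is} available, applied across the panels $i$ rather than across time $s$. The random variables $\delta_i\sum_{s=u}^{v}e_{i,s}$, $1\le i\le N$, are independent by Assumption \ref{b-1}(i), and Assumption \ref{cond-1}(ii) bounds their $\nu$-th moments with $\nu=\kappa/2>2$; together with $\sum_{i=1}^N|\delta_i|^{\nu}\le\Delta_{N,T}^{\nu/2}$ this gives
\begin{equation*}
E\Bigl|\sum_{s=u}^{v}\sum_{i=1}^N\delta_ie_{i,s}\Bigr|^{\nu}\le c\,(v-u)^{\nu/2}\Delta_{N,T}^{\nu/2},
\end{equation*}
i.e.\ a $\nu$-th moment bounded by the $\nu/2$-th power ($>1$) of a superadditive function. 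Now M\'oricz--Serfling--Stout applies legitimately, $E\max_{e^k\le\ell\le e^{k+1}}|\cdot|^{\nu}\le c\,\Delta_{N,T}^{\nu/2}e^{k\nu/2}$, and the Markov bound with exponent $\nu$ makes the sum over dyadic blocks $k\ge\log M$ converge to $c/x^{\nu}$ with no logarithmic loss. This is precisely the paper's route; the rest of your argument then goes through unchanged.
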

\begin{proof} First we write
$$
\delta_i(r(t)Q_i(t)-r(t_0)Q_i(t_0))=r(t)\delta_i(Q_i(t)-Q_i(t_0))+\delta_iQ_i(t_0)(r(t)-r(t_0)).
$$
Applying the definition of $r(t)$ with Assumptions \ref{b-1}(i) and \ref{cond-1}(i), we get
\begin{align*}
\max_{|t-t_0|\leq T}\frac{|r(t)-r(t_0)|}{|t-t_0|}\left|\sum_{i=1}^N\delta_iQ_i(t_0)\right|=O_P(1)T^{1/2}\Delta_{N,T}^{1/2}.
\end{align*}
It follows from the definition of $Q_i(t)$ that for all $i$
$$
Q_i(t_0)-Q_i(t)=Z_{i}(t,t_0)
-\frac{t_0-t}{T}\sum_{s=1}^Te_{i,s},\;\;\mbox{if}\;\;1\leq t \leq T,
$$
where
\begin{displaymath}
Z_i(t,t_0)=\left\{
\begin{array}{ll}
&\displaystyle \sum_{s=t+1}^{t_0}e_{i,s},\;\;\mbox{if}\;\;1\leq t < t_0\\
&0,\;\;\mbox{if}\;\;t=t_0\\
&\displaystyle-\sum_{s=t_0+1}^{t}e_{i,s},\;\;\mbox{if}\;\;t_0< t \leq T.
\end{array}
\right.
\end{displaymath}
Clearly, $(\sum_{i=1}^N\delta_i\sum_{s=1}^Te_{i,s})^2=O(T\Delta^{1/2}_{N,T})$ on account of Assumptions \ref{b-1}(i) and \ref{cond-1}(i) and therefore
\begin{align*}
\max_{|t-t_0|\leq T}\frac{|r(t)|}{|t-t_0|}\left|\frac{t-t_0}{T}\sum_{i=1}^N\delta_i\sum_{s=1}^Te_{i,s}
\right|\leq \left|\sum_{i=1}^N\delta_i\sum_{s=1}^Te_{i,s}
\right|=O_P(1)T^{1/2}\Delta_{N,T}^{1/2}.
\end{align*}
Repeating the arguments used in \eqref{mo-q},
by Markov's inequality we have
\begin{align}\label{sto-0}
P&\left\{\max_{1 \leq t \leq t_0-M}\frac{1}{t_0-t}\left|\sum_{i=1}^N\delta_iZ_i(t,t_0)\right|\geq x (\Delta_{N,T}/M)^{1/2}\right\}\\
&\leq P\left\{\max_{\log M \leq j \leq \log T}\max_{e^j\leq \ell \leq e^{j+1}}\frac{1}{\ell}\left|\sum_{i=1}^N\delta_iZ_i(t_0-\ell,t_0)\right|\geq x (\Delta_{N,T}/M)^{1/2}\right\}  \notag\\
&\leq \sum_{j= \log M}^\infty P\left\{\max_{e^j\leq \ell \leq e^{j+1}}\left|\sum_{i=1}^N\delta_iZ_i(t_0-\ell,t_0)\right|\geq x e^j (\Delta_{N,T}/M)^{1/2}\right\}\notag\\
&\leq \frac{(M/\Delta_{N,T})^{\nu/2}}{x^\nu }\sum_{j= \log M}^\infty
e^{-j\nu}E\max_{e^j\leq \ell \leq e^{j+1}}\left|\sum_{i=1}^N\delta_iZ_i(t_0-\ell,t_0)\right|^\nu.\notag
\end{align}
With $\bar{e}_{i,s}=e_{i,t_0-s+1}$ we
\begin{align*}
\max_{e^j\leq \ell \leq e^{j+1}}\left|\sum_{i=1}^N\delta_iZ_i(t_0-\ell,t_0)\right|
=\max_{e^j\leq \ell \leq e^{j+1}}\left|
\sum_{s=1}^{\ell+1}\sum_{i=1}^N\delta_i\bar{e}_{i,s}
\right|.
\end{align*}
Using Assumptions \ref{b-1}(i), \ref{cond-1}(ii) and \ref{cond-4}(ii) with Rosenthal's inequality we conclude for all $\nu>2$ that
\begin{align}\label{sto-1}
E\left|\sum_{s=u}^{v}\sum_{i=1}^N\delta_i\bar{e}_{i,s}\right|^\nu\leq c  (v-u)^{\nu/2}\Biggl\{
\sum_{i=1}^N |\delta_i|^{\nu}+\Delta_{N,T}^{\nu/2}\Biggl\}\leq 2 c(v-u)^{\nu/2}\Delta_{N,T}^{\nu/2},
\end{align}
since by the multinomial theorem
$$
\sum_{i=1}^N |\delta_i|^{\nu}\leq \Delta_{N,T}^{\nu/2}.
$$
The maximal inequality  of M\'oricz et al.\ (1982) and  \eqref{sto-1} imply that
$$
E\max_{e^j\leq \ell \leq e^{j+1}}\left|\sum_{i=1}^N\delta_iZ_i(t_0-\ell,t_0)\right|^\nu\leq c \Delta_{N,T}^{\nu/2}e^{j\nu/2}.
$$
and therefore by \eqref{sto-1} we have
\begin{align}\label{sto-3}
P&\left\{\max_{1 \leq t \leq t_0-M}\frac{1}{t_0-t}\left|\sum_{i=1}^N\delta_iZ_i(t,t_0)\right|\geq x (\Delta_{N,T}/M)^{1/2}\right\}\\
&\leq  c \frac{(M/\Delta_{N,T})^{\nu/2}}{x^\nu }\sum_{j= \log M}^\infty e^{-j\nu/2}\Delta_{N,T}^{\nu/2}
\notag \\
&\leq \frac{c}{x^\nu}.  \notag
\end{align}
Thus we conclude that
$$
\max_{1\leq t \leq t_0-M}\frac{1}{t_0-t}\left|\sum_{i=1}^N\delta_iZ_i(t,t_0)\right|=O_P(1)(\Delta_{N,T}/M)^{1/2}
$$
and by similar arguments we have
$$
\max_{t_0+M\leq  t \leq T}\frac{1}{t-t_0}\left|\sum_{i=1}^N\delta_iZ_i(t,t_0)\right|=O_P(1)(\Delta_{N,T}/M)^{1/2},
$$
which also completes the proof of the lemma.
\end{proof}

\medskip
\begin{lemma}\label{gV} If Assumptions \ref{b-1}--\ref{con-eta} hold, then
$$
\max_{|t-t_0|\geq M} \frac{1}{|t-t_0|}\left|\sum_{i=1}^N \gamma_i^2(V^2(t)-V^2(t_0))\right|=O_P(1)\Gamma_{N,T}\left(1 +(\log (T/ M))^{2/\bark}+M^{-1/2}T^{1/2}\right).
$$
\end{lemma}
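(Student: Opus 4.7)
Since $V(t)$ does not depend on $i$, we can factor out $\sum_{i=1}^N \gamma_i^2 = \Gamma_{N,T}$, so it suffices to bound $\max_{|t-t_0|\geq M} |V^2(t)-V^2(t_0)|/|t-t_0|$ in probability. I would write
\begin{equation*}
V^2(t)-V^2(t_0) = \bigl(V(t)-V(t_0)\bigr)^2 + 2V(t_0)\bigl(V(t)-V(t_0)\bigr),
\end{equation*}
and use the identity (valid for $t$ on either side of $t_0$)
\begin{equation*}
V(t)-V(t_0) = Y(t,t_0) - \tfrac{t-t_0}{T}\sum_{s=1}^T \eta_s, \qquad Y(t,t_0)=\!\!\sum_{s=t_0+1}^{t}\eta_s \text{ (with the natural convention for } t<t_0),
\end{equation*}
so that the problem reduces to controlling the four pieces $Y(t,t_0)^2/|t-t_0|$, $|t-t_0|\,(\sum_s\eta_s)^2/T^2$, $V(t_0)Y(t,t_0)/|t-t_0|$, and $V(t_0)(\sum_s\eta_s)/T$, uniformly in $|t-t_0|\geq M$.

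The easy pieces are handled by Markov's inequality together with Assumption \ref{con-eta}: $|\sum_{s=1}^T\eta_s|=O_P(T^{1/2})$ and $|V(t_0)|=O_P(T^{1/2})$. Hence $|t-t_0|(\sum\eta_s)^2/T^2\leq (T/T^2)O_P(T) = O_P(1)$ and $|V(t_0)||\sum\eta_s|/T=O_P(1)$, delivering the constant-order term in the stated bound.

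For the remaining two pieces I would use a dyadic blocking argument analogous to Lemma \ref{r-q}. By stationarity, $Y(t_0+\ell,t_0)\stackrel{d}{=}\sum_{s=1}^\ell \eta_s$, and Assumption \ref{con-eta} combined with the Móricz–Serfling–Stout (1982) maximal inequality (applicable since $g(a,b)=b-a$ is superadditive) yields $E\max_{e^j\leq \ell\leq e^{j+1}}|Y(t_0+\ell,t_0)|^{\bark}\leq c\, e^{j\bark/2}$. For the ratio $Y^2/|t-t_0|$, Markov's inequality on the level $x e^{j/2}$ gives
\begin{equation*}
P\Bigl\{\max_{e^j\leq \ell\leq e^{j+1}}|Y(t_0+\ell,t_0)|\geq x\, e^{j/2}\Bigr\}\leq \frac{c}{x^{\bark}},
\end{equation*}
so summing $j$ from $\lfloor\log M\rfloor$ to $\lceil\log T\rceil$ gives the overall probability bound $c\log(T/M)/x^{\bark}$, hence $\max_{|t-t_0|\geq M} Y^2/|t-t_0|=O_P\bigl((\log(T/M))^{2/\bark}\bigr)$. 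For the ratio $|Y|/|t-t_0|$, the same dyadic scheme but at level $x\,e^{j}$ gives a geometric sum $\sum_{j\geq \log M} c\,e^{-j\bark/2}/x^{\bark}\leq c/(x^{\bark}M^{\bark/2})$, so $\max_{|t-t_0|\geq M}|Y|/|t-t_0|=O_P(M^{-1/2})$. Multiplying by $|V(t_0)|=O_P(T^{1/2})$ gives $O_P(T^{1/2}M^{-1/2})$, which is exactly the third term in the bound.

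Combining the four estimates proves
\begin{equation*}
\max_{|t-t_0|\geq M}\frac{|V^2(t)-V^2(t_0)|}{|t-t_0|}=O_P\Bigl(1+(\log(T/M))^{2/\bark}+M^{-1/2}T^{1/2}\Bigr),
\end{equation*}
and multiplying by $\Gamma_{N,T}$ yields the lemma. The main technical obstacle is verifying that the Móricz–Serfling–Stout maximal inequality applies under only the moment hypothesis of Assumption \ref{con-eta}; once that is in place the dyadic blocking is routine. A careful bookkeeping of the two separate dyadic arguments (one producing the logarithmic factor, one producing the $M^{-1/2}$ factor) is needed since they arise from applying Markov at different scaling rates on the same maximum.
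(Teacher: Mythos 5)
Your proposal is correct and follows essentially the same route as the paper: the same decomposition $V^2(t)-V^2(t_0)=(V(t)-V(t_0))^2+2V(t_0)(V(t)-V(t_0))$, the same reduction to the partial sums $\sum_{s=t+1}^{t_0}\eta_s$ plus the $O_P(1)$ terms from $T^{-1/2}\sum_{s=1}^T\eta_s$, and the same pair of dyadic blocking arguments with the M\'oricz--Serfling--Stout maximal inequality applied at the two different normalizations $|t-t_0|^{1/2}$ and $|t-t_0|$ to produce, respectively, the $(\log(T/M))^{2/\bark}$ and $M^{-1/2}T^{1/2}$ terms. No substantive differences from the paper's argument.
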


\begin{proof} We write $|V^2(t)-V^2(t_0)|\leq (V(t)-V(t_0))^2+2|V(t_0)| |V(t)-V(t_0)| $. If $1\leq t \leq t_0$, then
$$
|V(t)-V(t_0)|\leq \left|\sum_{s=t+1}^{t_0} \eta_s\right|+\frac{|t-t_0|}{T}\left|\sum_{s=1}^T\eta_s\right|
$$
and therefore
$$
(V(t)-V(t_0))^2\leq 4\left(\sum_{s=t+1}^{t_0} \eta_s\right)^2+4\frac{(t-t_0)^2}{T^2}\left(\sum_{s=1}^T\eta_s\right)^2.
$$
Thus we get from Assumption \ref{con-eta} that
$$
\max_{|t-t_0|\geq M} \frac{(V(t)-V(t_0))^2}{t_0-t}=O_P(1)+2\left(\max_{|t-t_0|\geq M}\frac{1}{(t_0-t)^{1/2}}\left|\sum_{s=t+1}^{t_0} \eta_s\right|\right)^2.
$$
Repeating the arguments used in \eqref{mo-q}  we get that
\begin{align*}
P&\left\{\max_{t_0-t\geq M} \frac{1}{(t_0-t)^{1/2}}\left|\sum_{s=t+1}^{t_0} \eta_s\right|\geq x
\right\}\\
&\leq P\left\{\max_{\log M\leq k\leq \log T}\max_{e^k\leq u\leq e^{k+1}} \frac{1}{u^{1/2}}\left|\sum_{s=t_0-u+1}^{t_0} \eta_s\right|\geq x
\right\}\\
&\leq \sum_{k=\log M}^{\log T}P\left\{\max_{e^k\leq u\leq e^{k+1}}
\left|\sum_{s=t_0-u+1}^{t_0} \eta_s\right|\geq x e^{k/2}
\right\}\\
&\leq \frac{1}{x^{\bark}}\sum_{k=\log M}^{\log T}e^{-k\bark /2}E\max_{e^k\leq u\leq e^{k+1}}\left|\sum_{s=t_0-u+1}^{t_0} \eta_s\right|^{\bark}.
\end{align*}
Following the arguments used in the proofs of Lemmas \ref{qq-0} and \ref{r-q} one can verify that
$$
E\max_{e^k\leq u\leq e^{k+1}}\left|\sum_{s=t_0-u+1}^{t_0} \eta_s\right|^{\bark}\leq c e^{k\bark /2}
$$
which implies that
\begin{align*}
P\left\{\max_{t_0-t\geq M} \frac{1}{(t_0-t)^{1/2}}\left|\sum_{s=t+1}^{t_0} \eta_s\right|\geq x
\right\}\leq c \frac{\log (T/ M)}{x^{\bark}}.
\end{align*}
Similar computations can be performed for $t-t_0\geq M$ and thus we conclude
$$
\max_{|t_0-t|\geq M} \frac{1}{(t_0-t)^{1/2}}\left|\sum_{s=t+1}^{t_0} \eta_s\right|=O_P((\log (T/ M))^{1/{\bark}}).
$$
As in  the proof of Lemma \ref{qq-0}  we have that
\begin{align}\label{gd-3}
\sup_{t_0-t\geq M}\frac{1}{t_0-t}\left|\sum_{s=t+1}^{t_0}\eta_s\right|=O_P(M^{-1/2})\;\;\;\mbox{and}
\;\;\;\sup_{t-t_0\geq M}\frac{1}{t-t_0}\left|\sum_{s=t_0+1}^{t}\eta_s\right|=O_P(M^{-1/2}).
\end{align}
The proof of the lemma is now complete.
\end{proof}
\medskip

\begin{lemma}\label{ga-q} If Assumptions \ref{b-1}--\ref{con-eta} hold, then
$$
\max_{|t-t_0|\geq M}\frac{1}{|t-t_0|}\left|\sum_{i=1}^N\gamma_i(Q_i(t)V(t)-Q_i(t_0)V(t_0))    \right|
=O_P(1)\Gamma_{N,T}^{1/2}T^{1/2}M^{-1/2}.
$$
\end{lemma}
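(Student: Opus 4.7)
The natural starting point is the product-splitting identity
\[
Q_i(t)V(t)-Q_i(t_0)V(t_0)=V(t_0)\bigl(Q_i(t)-Q_i(t_0)\bigr)+Q_i(t)\bigl(V(t)-V(t_0)\bigr),
\]
so after multiplying by $\gamma_i$ and summing the bound reduces to controlling two quantities:
\[
A(t)=V(t_0)\sum_{i=1}^N\gamma_i\bigl(Q_i(t)-Q_i(t_0)\bigr)\quad\text{and}\quad B(t)=\Bigl(V(t)-V(t_0)\Bigr)\sum_{i=1}^N\gamma_iQ_i(t).
\]

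For $A(t)$, Assumption \ref{con-eta} together with \eqref{fp-2a} gives $|V(t_0)|=O_P(T^{1/2})$, so it suffices to show
\[
\max_{|t-t_0|\geq M}\frac{1}{|t-t_0|}\Bigl|\sum_{i=1}^N\gamma_i(Q_i(t)-Q_i(t_0))\Bigr|=O_P(\Gamma_{N,T}^{1/2}M^{-1/2}).
\]
This is exactly the argument of Lemma \ref{r-q} with $\delta_i$ replaced by $\gamma_i$. Writing $Q_i(t_0)-Q_i(t)=Z_i(t,t_0)-\tfrac{t_0-t}{T}\sum_{s=1}^Te_{i,s}$, the endpoint term is $O_P(T^{-1/2}\Gamma_{N,T}^{1/2})$ uniformly (by Assumptions \ref{b-1}(i) and \ref{cond-1}(i) applied to the variance of $\sum_i\gamma_i\sum_{s=1}^Te_{i,s}$), which contributes a term dominated by the claimed bound. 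For the main $Z_i(t,t_0)$ piece one repeats the dyadic block argument of \eqref{sto-0}--\eqref{sto-3}: Rosenthal's inequality over independent $i$ yields $E|\sum_{s=u}^v\sum_{i=1}^N\gamma_i\bar e_{i,s}|^\nu\leq c(v-u)^{\nu/2}\Gamma_{N,T}^{\nu/2}$ (using $\sum\gamma_i^\nu\leq\Gamma_{N,T}^{\nu/2}$), the M\'oricz--Serfling--Stout maximal inequality then gives the maximum over each dyadic block, and Markov plus a geometric sum over the blocks starting at $M$ closes the estimate. Multiplying the resulting $O_P(\Gamma_{N,T}^{1/2}M^{-1/2})$ by $|V(t_0)|=O_P(T^{1/2})$ yields $O_P(\Gamma_{N,T}^{1/2}T^{1/2}M^{-1/2})$ for $A(t)$.

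For $B(t)$, the factor $\sum_{i=1}^N\gamma_iQ_i(t)$ is already known to satisfy $\sup_{1\leq t\leq T}|\sum_{i=1}^N\gamma_iQ_i(t)|=O_P(T^{1/2}\Gamma_{N,T}^{1/2})$ (this is displayed in the proof of Lemma \ref{first}, just before \eqref{fp-5}). The ratio $|V(t)-V(t_0)|/|t-t_0|$ was shown in \eqref{gd-3} (and the surrounding lines in the proof of Lemma \ref{gV}) to be $O_P(M^{-1/2})$ uniformly over $|t-t_0|\geq M$, once the boundary term $\tfrac{t-t_0}{T}\sum_{s=1}^T\eta_s$ is handled by Assumption \ref{con-eta} and a trivial estimate. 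Combining these two bounds multiplicatively gives $O_P(T^{1/2}\Gamma_{N,T}^{1/2})\cdot O_P(M^{-1/2})=O_P(\Gamma_{N,T}^{1/2}T^{1/2}M^{-1/2})$ for $B(t)$, again matching the claim.

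Neither piece is a serious obstacle, since both building blocks have essentially been established in Lemmas \ref{first}, \ref{r-q}, and \ref{gV}. The only point requiring mild care is the boundary correction $-\tfrac{t_0-t}{T}\sum_{s=1}^Te_{i,s}$ inside $Q_i(t)-Q_i(t_0)$: one has to verify that after dividing by $|t-t_0|$ the resulting $T^{-1}\cdot O_P(T^{1/2}\Gamma_{N,T}^{1/2})$ term, multiplied by $|V(t_0)|=O_P(T^{1/2})$, produces $O_P(\Gamma_{N,T}^{1/2})$, which is absorbed into the stated bound $O_P(\Gamma_{N,T}^{1/2}T^{1/2}M^{-1/2})$ whenever $M\leq T$. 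Putting $A(t)$ and $B(t)$ together via the triangle inequality then finishes the proof.
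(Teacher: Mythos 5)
Your proof is correct and follows essentially the same route as the paper: a product-splitting of $Q_i(t)V(t)-Q_i(t_0)V(t_0)$, the bound $O_P(\Gamma_{N,T}^{1/2}M^{-1/2})$ on $\max_{|t-t_0|\ge M}|t-t_0|^{-1}|\sum_i\gamma_i(Q_i(t)-Q_i(t_0))|$ obtained by rerunning Lemma \ref{r-q} with $\gamma_i$ in place of $\delta_i$, and the uniform bounds $\max_t|V(t)|=O_P(T^{1/2})$ and $|V(t)-V(t_0)|/|t-t_0|=O_P(M^{-1/2})$ from \eqref{fp-2a} and \eqref{gd-3}. The only (immaterial) difference is that you freeze $V$ at $t_0$ and $Q_i$ at $t$ in the splitting, whereas the paper freezes $V$ at $t$ and $Q_i$ at $t_0$; both variants yield the stated rate.
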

\begin{proof} We write
$$
Q_i(t)V(t)-Q_i(t_0)V(t_0)=V(t)(Q_i(t)-Q_i(t_0))+(V(t)-V(t_0))Q_i(t_0).
$$
Assumption \ref{con-eta} implies that
$$
\max_{1\leq t \leq T}|V(t)|=O_P(T^{1/2})
$$
and by the arguments used  in the proof of Lemma \ref{r-q} one can show that
$$
\max_{|t-t_0|\geq M}\frac{1}{|t-t_0|}\left|\sum_{i=1}^N\gamma_i(Q_i(t)-Q_i(t_0))\right|=O_P(1)
M^{-1/2}\Gamma_{N,T}^{1/2}.
$$
Similar arguments yield
$$
\max_{|t-t_0|\geq M}\frac{|V(t)-V(t_0)|}{|t-t_0|}\left|\sum_{i=1}^N\gamma_iQ_i(t_0)\right|=O_P(1)
T^{1/2}M^{-1/2}\Gamma_{N,T}^{1/2}.
$$
\end{proof}

\medskip

\begin{lemma}\label{ga-de} If Assumptions \ref{b-1}--\ref{con-eta} hold, then
$$
\max_{|t-t_0|\geq M}\frac{1}{|t-t_0|}\left|\sum_{i=1}^N\gamma_i\delta_i(V(t)r(t)-V(t_0)r(t_0))    \right|=O_P(1)(T^{1/2}+TM^{-1/2})\left|
\Sigma_{N,T}\right|.
$$
\end{lemma}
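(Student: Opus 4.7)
The key observation is that the factor loadings and change sizes appear only through the scalar $\sum_{i=1}^N \gamma_i \delta_i = \Sigma_{N,T}$, so the quantity to bound factors as
\[
\left|\sum_{i=1}^N\gamma_i\delta_i\bigl(V(t)r(t)-V(t_0)r(t_0)\bigr)\right|=|\Sigma_{N,T}|\cdot\bigl|V(t)r(t)-V(t_0)r(t_0)\bigr|.
\]
My plan is therefore to prove the purely deterministic-plus-random bound
\[
\max_{|t-t_0|\ge M}\frac{|V(t)r(t)-V(t_0)r(t_0)|}{|t-t_0|}=O_P\!\bigl(T^{1/2}+TM^{-1/2}\bigr)
\]
and then multiply by $|\Sigma_{N,T}|$.

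For this, I would split the increment in the standard way,
\[
V(t)r(t)-V(t_0)r(t_0)=V(t)\bigl(r(t)-r(t_0)\bigr)+r(t_0)\bigl(V(t)-V(t_0)\bigr),
\]
and handle the two pieces separately. A direct case-by-case inspection of the definition \eqref{r-def} shows that $|r(t)-r(t_0)|/|t-t_0|\le 1$ uniformly in $t$, and $|r(t_0)|=t_0(T-t_0)/T\le T$. The first piece is then controlled by $\max_{1\le t\le T}|V(t)|$, which is $O_P(T^{1/2})$ by the moment condition in Assumption~\ref{con-eta} together with the Móricz--Serfling--Stout maximal inequality (this is exactly the bound \eqref{fp-2a} already established in Lemma~\ref{first}), giving the $T^{1/2}|\Sigma_{N,T}|$ contribution.

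For the second piece I need
\[
\max_{|t-t_0|\ge M}\frac{|V(t)-V(t_0)|}{|t-t_0|}=O_P(M^{-1/2}).
\]
Writing $V(t)-V(t_0)=-\sum_{s=t+1}^{t_0}\eta_s-\frac{t-t_0}{T}\sum_{s=1}^T\eta_s$ when $t<t_0$ (and symmetrically for $t>t_0$), one triangle inequality gives
\[
\frac{|V(t)-V(t_0)|}{|t-t_0|}\le \frac{1}{|t-t_0|}\left|\sum_{s=t\wedge t_0 +1}^{t\vee t_0}\eta_s\right|+\frac{1}{T}\left|\sum_{s=1}^T\eta_s\right|.
\]
The second summand is $O_P(T^{-1/2})=O_P(M^{-1/2})$ by Assumption~\ref{con-eta}. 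For the first summand, the bound $O_P(M^{-1/2})$ over $|t-t_0|\ge M$ is precisely the statement \eqref{gd-3} already established in the proof of Lemma~\ref{gV} (dyadic decomposition into blocks $[e^k,e^{k+1}]$ for $\log M\le k\le \log T$, Markov's inequality in \eqref{mo-q}, and the Móricz et al.\ maximal inequality applied to the partial sums of $\eta_s$ together with the moment bound from Assumption~\ref{con-eta}). Multiplying this $O_P(M^{-1/2})$ estimate by the $|r(t_0)|\le T$ factor produces the $TM^{-1/2}|\Sigma_{N,T}|$ contribution.

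No step here is a genuine obstacle; the dyadic peeling argument needed for $\max_{|t-t_0|\ge M}|V(t)-V(t_0)|/|t-t_0|$ is the only slightly delicate point, but it has already been carried out in Lemma~\ref{gV}, so I would simply reference it rather than redo it. Combining the two bounds, multiplying by $|\Sigma_{N,T}|$, and noting that $T^{1/2}+TM^{-1/2}$ absorbs the $T^{-1/2}$ term completes the proof.
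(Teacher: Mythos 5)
Your proof is correct and follows essentially the same route as the paper: the paper splits $V(t)r(t)-V(t_0)r(t_0)=V(t_0)(r(t)-r(t_0))+r(t)(V(t)-V(t_0))$ while you use the mirror-image decomposition $V(t)(r(t)-r(t_0))+r(t_0)(V(t)-V(t_0))$, but in both cases the first term is $O_P(T^{1/2})$ (the paper needs only $|V(t_0)|=O_P(T^{1/2})$, you invoke the already-established maximal bound \eqref{fp-2a}) and the second term is handled by $|r|\leq T$ together with the dyadic-peeling estimate \eqref{gd-3}. The factorization through $\Sigma_{N,T}$ and the absorption of the $T^{-1/2}$ remainder are both fine.
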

\begin{proof} Since $V(t)r(t)-V(t_0)r(t_0)=V(t_0)(r(t)-r(t_0))+r(t)(V(t)-V(t_0))$,
Lemma \ref{ga-de} follows from
\beq\label{gd-1}
\max_{|t-t_0|\geq M}\left|V(t_0)\frac{r(t)-r(t_0)}{t-t_0}\right|=O_P(1)T^{1/2}
\eeq
and
\beq\label{gd-2}
\max_{|t-t_0|\geq M}\left|r(t)\frac{V(t)-V(t_0)}{t-t_0}\right|=O_P(1)TM^{-1/2}.
\eeq
The claim in \eqref{gd-1} is an immediate consequence of the definition of $r(t)$ and Assumption \ref{con-eta} while \eqref{gd-2} is proven in \eqref{gd-3}.
\end{proof}

\medskip
\noindent
{\it Proof of Theorem \ref{rate-con-1}}
Under assumptions \eqref{m-cho-2} and \eqref{n-1} we use Lemmas \ref{tt-0}--\ref{ga-de} with $M=1$.
\qed

 \medskip
\noindent
{\it Proof of Remark \ref{rem-bai}.} The proof of this remark follows Bai (2010) closely. We use \eqref{summa}. Since $T$ is fixed,
$$
\max_{1\leq t \leq T}\sum_{i=1}^N\gamma_i^2V^2(t)=O_P(1)\Gamma
$$
and by Assumption \ref{cond-1}(i) and Markov's inequality we have
$$
\max_{1\leq t \leq T}\sum_{i=1}^NQ_i^2(t)=O_P(N).
$$
By the Cauchy--Schwarz inequality and Assumption \ref{cond-1}(i) we conclude
$$
E\left|\sum_{i=1}^NV(t)\gamma_iQ_i(t)\right|=O(1)\Gamma^{1/2}
$$
and therefore
$$
\max_{1\leq t \leq T}\left|\sum_{i=1}^NV(t)\gamma_iQ_i(t)\right|=O_P(1)\Gamma^{1/2}.
$$
Similar arguments give
$$
\max_{1\leq t \leq T}\left|r(t)\sum_{i=1}^N\delta_iQ_i(t)\right|=O_P(\Delta^{1/2})
$$
and
$$
\max_{1\leq t \leq T}\left|r(t)V(t)\sum_{i=1}^N\delta_i\gamma_i\right|=O_P(|\Sigma|).
$$
The final term coming from \eqref{summa} to consider is $\Sigma_{i=1}^N r^2(t)\delta_i^2=\Delta r^2(t)$. Under the conditions of the remark, this is the asymptotically dominating term which has a unique maximum at $t_0$. Hence Remark \ref{rem-bai} is proven.
\qed\\
\medskip
\noindent
{\it Proof of Remark \ref{rem-bai-cor}.} Let
$$
f_i(t)=\frac{1}{(t(T-t))^{1/2}}\left(Q_i(t)+\gamma_iV(t)\right),
$$
where $Q_i(t)$ and $V(t)$ are defined in \eqref{q-v-def}. We note that due to the assumption that the $e_{i,s}$ and $\eta_t$ are sequences of uncorrelated random variables  we get that
\beq\label{husk-cor-1}
Ef_i(t)^2=\sigma_i^2+\gamma_i^2
\eeq
and
\beq\label{husk-cor-2}
\mbox{var}(f_i^2(t))\leq C_1(Ee_{i,0}^4+\gamma_i^4)
\eeq
with some constant $C_1$.
We write
$$
\sum_{i=1}^N\left(S_i(t)-\frac{t}{T}S_i(t)\right)^2\frac{1}{t(T-T)}=\cH_{1,N}(t)+\cH_{2,N}(t)+\cH_{3,N}(t),
$$
with
$$
\cH_{1,N}(t)=\Delta \frac{r^2(t)}{t(T-T)},\;\;\;\cH_{2,N}(t)=\sum_{i=1}^N f_{i}^2(t)\;\;\;\mbox{and}\;\;\;\cH_{3,n}(t)=2\sum_{i=1}^Nf_{i}(t)\frac{\delta_ir(t)}{(t(T-t))^{1/2}},
$$
where  $r(t), t=1,2,\ldots ,T$ is defined in \eqref{r-def}. We show that for all $t\neq t_0$
\beq\label{husk-cor-3}
\lim_{N\to \infty}P\{\cH_{1,N}(t_0)-\cH_{1,N}(t)\leq \cH_{2,N}(t)-\cH_{2,N}(t_0)+\cH_{2,N}(t)-\cH_{2,N}(t_0)\}=0,
\eeq
which immediately implies Remark \ref{rem-bai-cor}. We note that with some $C_2>0$ we have that $\cH_{1,N}(t_0)-\cH_{1,N}(t)\geq C_2\Delta$ for all $t\neq t_0$.
By the independence of the processes $Q_i(t), 1\leq i \leq N$ and V(t)  we conclude
\begin{align*}
E\left(\cH_{2,N}(t)-\cH_{2,N}(t_0)\right)^2&=\sum_{i=1}^NE\left(\frac{1}{t(T-t)}Q^2_i(t)-\frac{1}{t_0(T-t_0)}Q^2_i(t_0)\right)^2\\
&\hspace{.5cm}+4\sum_{i=1}^NE\left(\frac{1}{t(T-t)}Q_i(t)\gamma_iV(t)-\frac{1}{t_0(T-t_0)}Q_i(t_0)\gamma_iV(t)\right)^2\\
&\hspace{.5cm}+E\left(\frac{V^2(t)}{t(T-t)}-\frac{V^2(t_0)}{t_0(T-t_0)}\right)^2\Gamma^2\\
&=O(N+\Gamma+\Gamma^2)
\end{align*}
and therefore
$$
\max_{1\leq t <T}\left|\cH_{2,N}(t)-\cH_{2,N}(t_0))\right|=O_P(1)\left(N^{1/2}+\Gamma^{1/2}+\Gamma\right).
$$
Similarly,
$$
\max_{1\leq t <T}\left|\cH_{3,N}(t)-\cH_{3,N}(t_0))\right|=O_P(1)\left( \Delta^{1/2}+|\Sigma| \right)=O_P(1)\left( \Delta^{1/2}+\Delta^{1/2}\Gamma^{1/2} \right),
$$
since $|\Sigma|\leq \Delta^{1/2}\Gamma^{1/2}$, completing the proof of \eqref{husk-cor-3}.
\qed

\medskip
\begin{lemma}\label{rate-con} We assume that Assumptions \ref{b-1}--\ref{m-cho-1} hold, and $|{\frak s}|<\infty.$
Then, as $N,T\to \infty$ we have that
\beq\label{r-co-1}
\Delta_{N,T} |\hat{t}_{N,T}-t_0|=O_P(1).
\eeq
\end{lemma}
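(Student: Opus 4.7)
The plan is to use a standard ``peeling'' or rate argument based on the global decomposition \eqref{deco} together with the one-sided lower bound for the signal term provided by Lemma \ref{tt-0}, and the six uniform rate bounds supplied by Lemmas \ref{qq-0}--\ref{ga-de}. Fix $0<\alpha<\theta<1-\alpha$. By Lemma \ref{first} it suffices to work on the event where $\hat t_{N,T}\in[\lfloor\alpha T\rfloor,\lfloor(1-\alpha)T\rfloor]$. Since $\hat t_{N,T}$ maximises $U_N(t)$, we have $U_N(\hat t_{N,T})-U_N(t_0)\ge 0$. Writing $\tau=|\hat t_{N,T}-t_0|$ and applying Lemma \ref{tt-0} to the leading deterministic term in \eqref{deco}, this inequality rearranges to
\[
c_2\,\tau\,T\,\Delta_{N,T}\;\le\;\bigl|R_N(\hat t_{N,T})\bigr|,
\]
where $R_N(t)$ collects the five stochastic terms of \eqref{deco} (the $Q_i^2$-differences, the $\gamma_i^2 V^2$-differences, the $\gamma_iQ_iV$-cross terms, the $\delta_i r Q_i$-cross terms, and the $\gamma_i\delta_i V r$-cross terms).

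I would then implement the peeling at scale $M=M(K)=K/\Delta_{N,T}$ with $K$ large and fixed. Noting that $M\to\infty$ and $M/T\to 0$ follow from Assumption \ref{cond-4}(i), I apply Lemmas \ref{qq-0}--\ref{ga-de} on the range $|t-t_0|\ge M$. Dividing both sides of the displayed inequality by $|\hat t_{N,T}-t_0|$ on the event $\{\tau\ge M\}$ yields
\begin{align*}
c_2\,T\,\Delta_{N,T}\;\le\;&\;O_P\!\bigl(N+T^{2/\kappa}N^{1/2}+(NT/M)^{1/2}\bigr)+O_P\!\bigl(T^{1/2}\Delta_{N,T}^{1/2}+T(\Delta_{N,T}/M)^{1/2}\bigr)\\
&\;+O_P\!\bigl(\Gamma_{N,T}\bigl(1+(\log(T/M))^{2/\bark}+M^{-1/2}T^{1/2}\bigr)\bigr)\\
&\;+O_P\!\bigl(\Gamma_{N,T}^{1/2}T^{1/2}M^{-1/2}\bigr)+O_P\!\bigl((T^{1/2}+TM^{-1/2})|\Sigma_{N,T}|\bigr).
\end{align*}
Dividing through by $T\Delta_{N,T}$ and substituting $M=K/\Delta_{N,T}$, I then verify term by term that each ratio is either $o_P(1)$ or $O_P(K^{-1/2})$. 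The checks use: $N/(T\Delta_{N,T})\to 0$ and $(N/(T\Delta_{N,T}))^{1/2}\to 0$ from Assumption \ref{cond-4}(i); $N^{1/2}/(T^{1-2/\kappa}\Delta_{N,T})\to 0$ from Assumption \ref{m-cho-1}(iii); $T\Delta_{N,T}\to\infty$ for the $(T\Delta_{N,T})^{-1/2}$ terms; $\Gamma_{N,T}/(T\Delta_{N,T})^{1/2}\to 0$ from Assumption \ref{cond-4}(ii) (which handles the $\Gamma_{N,T}M^{-1/2}T^{1/2}$ and $\Gamma_{N,T}^{1/2}T^{1/2}M^{-1/2}$ contributions via a $1/K^{1/2}$ factor); the bound $\log(T/M)=\log(T\Delta_{N,T}/K)\le 2\log(T/\Delta_{N,T})$ together with Assumption \ref{m-cho-1}(ii) to dispatch the logarithmic $\Gamma_{N,T}$-term; and finally $|\Sigma_{N,T}|/\Delta_{N,T}^{1/2}\to{\frak s}$ with $|{\frak s}|<\infty$ (together with Cauchy--Schwarz if needed) to bound the $|\Sigma_{N,T}|$ terms by $|{\frak s}|(T\Delta_{N,T})^{-1/2}$ and by $|{\frak s}|K^{-1/2}$ respectively.

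The conclusion is then that for every $\varepsilon>0$ there exist $K=K(\varepsilon)$ and $N_0,T_0$ such that for all $N\ge N_0, T\ge T_0$ the above inequality cannot hold with probability greater than $\varepsilon$, since its right-hand side is strictly smaller than $c_2 T\Delta_{N,T}$ with probability at least $1-\varepsilon$. Consequently $P(\Delta_{N,T}\tau\ge K)<\varepsilon$, which is precisely \eqref{r-co-1}. The main obstacle is bookkeeping: one has to confirm that with the single choice $M=K/\Delta_{N,T}$ all five error bounds simultaneously become negligible compared to $T\Delta_{N,T}$ using only Assumptions \ref{cond-4}, \ref{m-cho-1} and $|{\frak s}|<\infty$; in particular, the logarithmic $\Gamma_{N,T}(\log(T/M))^{2/\bark}$ contribution is the delicate one, and it is precisely to handle it that Assumption \ref{m-cho-1}(ii) is imposed.
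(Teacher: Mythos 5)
Your proposal follows the paper's own argument almost exactly: reduce to the restricted argmax via Lemma \ref{first}, peel at the scale $M=C/\Delta_{N,T}$, use Lemma \ref{tt-0} for the one-sided drift bound, and check via Lemmas \ref{qq-0}--\ref{ga-de} that every stochastic contribution, normalized by $|t-t_0|$, is either $o_P(T\Delta_{N,T})$ or $O_P(K^{-1/2})\,T\Delta_{N,T}$. All of your term-by-term checks are the right ones and go through --- except the one you yourself single out as delicate, where your chain of inequalities does not close as written.

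For the logarithmic term you bound $\log(T/M)=\log(T\Delta_{N,T}/K)$ by $2\log(T/\Delta_{N,T})$ and then invoke Assumption \ref{m-cho-1}(ii). But \ref{m-cho-1}(ii) only gives $(\log(T/\Delta_{N,T}))^{2/\bark}=o(T\Delta_{N,T})$, so combined with $\Gamma_{N,T}=o((T\Delta_{N,T})^{1/2})$ from Assumption \ref{cond-4}(ii) you obtain
\[
\frac{\Gamma_{N,T}\,(\log(T/\Delta_{N,T}))^{2/\bark}}{T\Delta_{N,T}}
=\frac{\Gamma_{N,T}}{(T\Delta_{N,T})^{1/2}}\cdot\frac{(\log(T/\Delta_{N,T}))^{2/\bark}}{(T\Delta_{N,T})^{1/2}}
=o(1)\cdot o\bigl((T\Delta_{N,T})^{1/2}\bigr),
\]
which is not $o(1)$; indeed, in a regime such as $T\Delta_{N,T}\sim\log T$, $\bark$ close to $2$, and $\Gamma_{N,T}$ close to $(T\Delta_{N,T})^{1/2}$, the second factor diverges and the product need not vanish. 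The slip is the lossy upper bound $\log(T\Delta_{N,T}/K)\le 2\log(T/\Delta_{N,T})$: when $\Delta_{N,T}\to 0$ the quantity $T/\Delta_{N,T}$ can be vastly larger than $T\Delta_{N,T}$. The fix is to not take that detour at all: since $T\Delta_{N,T}\to\infty$ (from Assumption \ref{cond-4}(i)), one has $(\log(T\Delta_{N,T}/K))^{2/\bark}=o((T\Delta_{N,T})^{1/2})$ simply because any power of a logarithm is dominated by any positive power of its argument, and then Assumption \ref{cond-4}(ii) alone disposes of the term; Assumption \ref{m-cho-1}(ii) is not what carries this step. With that correction your proof is complete and coincides with the paper's.
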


\begin{proof} By Lemma \ref{first} it is enough to prove that for all $0<\alpha<\theta$
\beq\label{r-co-2}
\Delta |\tilde{t}_{N,T}(\alpha)-t_0|=O_P(1)
\eeq
Under Assumption \ref{m-cho-1}(i) we choose $M=C/\Delta_{N,T}$, where $C>0$ is a constant.
Using Lemmas \ref{tt-0}, \ref{qq-0}, \ref{gV}--\ref{ga-de} and Assumption \ref{cond-4}(ii) we obtain that
\begin{align}\label{deco-1}
&\Delta_{N,T}(r^2(t)-r^2(t_0))+\sum_{i=1}^N(Q_i^2(t)-Q_i^2(t_0))+\Gamma_{N,T}(V^2(t)-V^2(t_0))\\
&\hspace{1cm}+2\sum_{i=1}^N\gamma_i(Q_i(t)V(t)
-Q_i(t_0)V(t_0))
+2\sum_{i=1}^N\gamma_i\delta_i(V(t)r(t)-V(t_0)r(t_0))\notag\\
&=\Delta_{N,T}(r^2(t)-r^2(t_0))(1+o_P(1))\;\;\mbox{uniformly on}\;\;|t-t_0|\geq M\notag
\end{align}
for all $M$. Also, by Lemmas \ref{tt-0} and \ref{r-q} we obtain that
\begin{align*}
\lim_{C\to\infty}\liminf_{N,T\to\infty}P\left\{\sup_{|t-t_0|\geq C/\Delta_{N,T}}\sum_{i=1}^N\left[\delta_i^2(r^2(t)-r^2(t_0))+2\delta_i(r(t)Q_i(t)-r(t_0)Q_i(t_0))
\right]<0
\right\}=1.
\end{align*}
Hence Lemma \ref{rate-con} is established under Assumption \ref{m-cho-1}(i) and $|{\frak s}|<\infty$.
\end{proof}

\medskip
\begin{lemma}\label{lem-w}  We assume that Assumptions \ref{b-1}--\ref{m-cho-1} hold, and $|{\frak s}|<\infty.$
Then, as $N,T\to \infty$ we have that
\beq\label{w-1-*}
\sup_{|t-t_0|\leq C/\Delta}\left|\frac{1}{T}\sum_{i=1}^N\delta_i^2(r^2(t_0)-r^2(t))-2\theta(1-\theta)\Delta g_\theta(t-t_0)
\right|=o(1),
\eeq
\begin{align}\label{w-2-*}
\sup_{|t-t_0|\leq C/\Delta}&\biggl|\frac{1}{T}\sum_{i=1}^N\delta_i(Q_i(t)r(t)-Q_i(t_0)r(t_0))\\
&\hspace{4 cm}+\theta(1-\theta)\sum_{i=1}^N\delta_i(\calS_i(t)-\calS_i(t_0))\notag
\biggl|\\
&=o_P(1),\notag
\end{align}
\beq\label{w-6}
\sup_{|t-t_0|\leq C/\Delta}\left|\frac{1}{T}\sum_{i=1}^N \gamma_i\delta_i(V(t)r(t)-V(t_0)r(t_0))+\theta(1-\theta)\Sigma_{N,T}(V(t)-V(t_0))\right|=o_P(1),
\eeq
\beq\label{w-3}
\sup_{|t-t_0|\leq C/\Delta}\left|\frac{1}{T}\sum_{i=1}^N(Q_i^2(t)-Q_i^2(t_0))\right|=o_P(1),
\eeq
\beq\label{w-4}
\sup_{|t-t_0|\leq C/\Delta}\left|\frac{1}{T}\sum_{i=1}^N\gamma_i^2(V^2(t)-V^2(t_0))\right|=o_P(1)
\eeq
and
\beq\label{w-5}
\sup_{|t-t_0|\leq C/\Delta}\left|\frac{1}{T}\sum_{i=1}^N \gamma_i(Q_i(t)V(t)-Q_i(t_0)V(t_0))\right|=o_P(1),
\eeq

for all $C>0$, where $\Delta=\Delta_{N,T}$ and $\calS_i(\cdot)$ is defined in \eqref{crlS}.
\end{lemma}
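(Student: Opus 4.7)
The plan is to prove the six estimates one by one on the localization window $|t-t_0|\leq C/\Delta_{N,T}$, which has asymptotic full probability by Lemma \ref{rate-con}. The key elementary facts used repeatedly are $r(t)/T=-\theta(1-\theta)+o(1)$ and $|t-t_0|/T\leq C/(T\Delta_{N,T})=o(1)$, uniform on the window by Assumptions \ref{t-0} and \ref{cond-4}(i). The deterministic identity \eqref{w-1-*} is a direct Taylor expansion of $r^2$ about $t_0$; splitting $t\leq t_0$ and $t>t_0$ yields $r^2(t_0)-r^2(t)=2\theta(1-\theta)T\,g_\theta(t-t_0)(1+o(1))+O((t-t_0)^2)$, and multiplying by $\Delta_{N,T}/T$ absorbs the quadratic error through $\Delta_{N,T}(t-t_0)^2/T\leq C^2/(T\Delta_{N,T})=o(1)$.

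For \eqref{w-2-*} and \eqref{w-6}, I would use the product-difference identity $A(t)B(t)-A(t_0)B(t_0)=B(t)\{A(t)-A(t_0)\}+A(t_0)\{B(t)-B(t_0)\}$ with $(A,B)=(Q_i,r)$ and $(V,r)$ respectively, then substitute $Q_i(t)-Q_i(t_0)=\calS_i(t)-\calS_i(t_0)-((t-t_0)/T)\calS_i(T)$. The leading piece $(r(t)/T)\{Q_i(t)-Q_i(t_0)\}$ contributes $-\theta(1-\theta)\sum_i\delta_i\{\calS_i(t)-\calS_i(t_0)\}$ (respectively $-\theta(1-\theta)\Sigma_{N,T}\{V(t)-V(t_0)\}$), which is the claimed drift. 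The three residual contributions---the $(t-t_0)/T$-correction involving $\sum_i\delta_i\calS_i(T)$, the cross term $\{r(t)-r(t_0)\}\sum_i\delta_iQ_i(t_0)/T$, and the approximation error of $r(t)/T$ multiplied by $\sum_i\delta_i(\calS_i(t)-\calS_i(t_0))$---are all $o_P(1)$ uniformly: Assumption \ref{cond-1}(i) gives $\sum_i\delta_i\calS_i(T)$ and $\sum_i\delta_iQ_i(t_0)$ of order $(T\Delta_{N,T})^{1/2}$, so after the prefactor $\leq C/(T\Delta_{N,T})$ each is $O_P(1/(T\Delta_{N,T})^{1/2})$, while a Doob-type maximal inequality for $t\mapsto\sum_i\delta_i(\calS_i(t)-\calS_i(t_0))$ (variance $O(|t-t_0|\Delta_{N,T})=O(1)$ on the window) handles the third. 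The identical argument gives \eqref{w-6} once the factor $|\mathfrak{s}|<\infty$, i.e.\ $\Sigma_{N,T}=O(\Delta_{N,T}^{1/2})$, is used to absorb the $\Sigma_{N,T}$-prefactor.

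For the three error bounds \eqref{w-3}, \eqref{w-4}, \eqref{w-5}, I would apply Lemmas \ref{qq-0}, \ref{gV}, \ref{ga-q} with $M\asymp 1/\Delta_{N,T}$ so that their $(\cdot/M)^{1/2}$-type residuals acquire the right order. After dividing by $T$, the bounds reduce to the convergences $N/(T\Delta_{N,T})\to 0$, $N^{1/2}/(T^{1-2/\kappa}\Delta_{N,T})\to 0$, $\Gamma_{N,T}/(T\Delta_{N,T})^{1/2}\to 0$ and $\Gamma_{N,T}/(T\Delta_{N,T})\to 0$, which follow from Assumptions \ref{cond-4}(i),(ii) and \ref{m-cho-1}(iii). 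The complementary short range $|t-t_0|<M$ is handled by a direct second-moment argument on the decomposition $Q_i^2(t)-Q_i^2(t_0)=2Q_i(t_0)\{Q_i(t)-Q_i(t_0)\}+\{Q_i(t)-Q_i(t_0)\}^2$ and the analogous splittings for $V^2$ and $Q_iV$, together with the moment bound $E\sum_i(\calS_i(t)-\calS_i(t_0))^2=O(N|t-t_0|)$.

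The principal obstacle is the bookkeeping in \eqref{w-3}--\eqref{w-5}: because the localization window $|t-t_0|\leq C/\Delta_{N,T}$ is genuinely long when $\Delta_{N,T}\to 0$, uniform control has to simultaneously invoke all three rate conditions in Assumptions \ref{cond-4}(i), \ref{cond-4}(ii) and \ref{m-cho-1}(iii), and the choice $M\asymp 1/\Delta_{N,T}$ in Lemmas \ref{qq-0}--\ref{ga-q} is what makes the $M^{-1/2}$-type terms match the competing $|t-t_0|/T\leq C/(T\Delta_{N,T})$ prefactor.
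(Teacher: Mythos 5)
Your treatment of \eqref{w-1-*}, \eqref{w-2-*} and \eqref{w-6} is essentially the paper's own argument: the same splitting $r^2(t)-r^2(t_0)=2r(t_0)(r(t)-r(t_0))+(r(t)-r(t_0))^2$, the same product-difference decompositions, the same substitution $Q_i(t)-Q_i(t_0)=\calS_i(t)-\calS_i(t_0)-((t-t_0)/T)\calS_i(T)$, and the same orders $\sum_i\delta_i\calS_i(T)=O_P((T\Delta)^{1/2})$, $\sum_i\delta_iQ_i(t_0)=O_P((T\Delta)^{1/2})$, $\Sigma_{N,T}=O(\Delta^{1/2})$, all killed by $|t-t_0|/T\leq C/(T\Delta)$ and Assumption \ref{cond-4}(i). (Two cosmetic points: the sup in the lemma is over a deterministic range, so no appeal to Lemma \ref{rate-con} is needed to ``localize''; and the maximal inequality you want for the increment processes is the M\'oricz--Serfling--Stout inequality used throughout the paper, not Doob's, since the $e_{i,s}$ need not be martingale differences.)

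For \eqref{w-3}--\eqref{w-5} you take a genuinely different route: you recycle Lemmas \ref{qq-0}, \ref{gV} and \ref{ga-q} with $M\asymp 1/\Delta$ on the outer part of the window and verify, correctly, that the resulting rates reduce to $N/(T\Delta)\to 0$, $N^{1/2}/(T^{1-2/\kappa}\Delta)\to 0$ and $\Gamma_{N,T}/(T\Delta)^{1/2}\to 0$. The paper instead proves \eqref{w-3} from scratch: it centers $\psi_i(t)=Q_i^2(t)-Q_i^2(t_0)-E(Q_i^2(t)-Q_i^2(t_0))$, bounds the mean part by $O(N/(T\Delta))$, and establishes tightness of $u\mapsto\sqrt{\Delta/(NT)}\sum_i\psi_i(u/\Delta)$ in ${\mathcal D}[-C,C]$ via a Rosenthal bound of order $\kappa/2$ and Billingsley's criterion, which yields the uniform bound $O_P((NT/\Delta)^{1/2})$ over the \emph{entire} window in one stroke. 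Your route is workable for the outer range, but it leaves a genuine gap on the inner range $|t-t_0|<M\asymp 1/\Delta$: when $\Delta\to 0$ this range still contains order $1/\Delta\to\infty$ time points, so ``a direct second-moment argument'' at each fixed $t$ does not give uniformity, and a union bound over pointwise Chebyshev estimates for the cross term $\sum_iQ_i(t_0)(Q_i(t)-Q_i(t_0))$ produces an extra factor of $1/\Delta$ that the assumptions do not control (e.g.\ $\Delta=T^{-0.9}$, $N=o(T^{0.1})$ is admissible and defeats it). The pure square $\sum_i(Q_i(t)-Q_i(t_0))^2$ is harmless because it is nonnegative and a per-panel maximal inequality applies, but the cross term needs either the paper's tightness argument or a maximal inequality for $t\mapsto\sum_i\sum_s\zeta_i(s)$ built from the increment moment bound \eqref{rosie-2} plus M\'oricz --- exactly the machinery inside the proof of Lemma \ref{qq-0}. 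So the gap is fillable with tools you already cite, but as written the inner-range step would not go through.
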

\begin{proof} First we note
\bals
\frac{1}{T}\sum_{i=1}^N\delta_i^2(r^2(t)-r^2(t_0))=\frac{2r(t_0)}{T}&\sum_{i=1}^N\delta_i^2(r(t)-r(t_0))\\
&+\frac{1}{T}\sum_{i=1}^N\delta_i^2(r(t)-r(t_0))^2.
\end{align*}
Using the definition of $r(t)$ and Assumption \ref{cond-4}(i) we conclude
$$
\sup_{|t-t_0|\leq C/\Delta}\left|\frac{1}{T}\sum_{i=1}^N\delta_i^2(r(t)-r(t_0))^2\right|=O(1/(T\Delta))=o(1)
$$
and
$$
\sup_{|t-t_0|\leq C/\Delta}\left|\frac{2r(t_0)}{T}\sum_{i=1}^N\delta_i^2(r(t)-r(t_0))-2\theta(1-\theta)\Delta g_\theta(t-t_0)
\right|=o(1),
$$
completing the proof of \eqref{w-1-*}.\\
Similarly,
\bals
\sum_{i=1}^N\delta_i(Q_i(t)r(t)-Q_i(t_0)r(t_0))=r(t_0)\sum_{i=1}^N\delta_i(Q_i(t)-Q_i(t_0))+\sum_{i=1}^N\delta_iQ_i(t)(r(t)-r(t_0))
\end{align*}
and
\bals
\sum_{i=1}^N\delta_i(Q_i(t)-Q_i(t_0))=\sum_{i=1}^N\delta_i(\calS_i(t)-\calS_i(t_0))+\frac{t-t_0}{T}\sum_{i=1}^N\delta_i\calS_i(T).
\end{align*}
Computing the variance of $\sum_{i=1}^N\delta_i\calS_i(T)$  we get
$$
\sup_{|t-t_0|\leq C/\Delta}\left|\frac{r(t_0)}{T}\frac{t-t_0}{T}\sum_{i=1}^N\delta_i\calS_i(T)
\right|=O_P(1/(T\Delta)^{1/2})=o_P(1)
$$
by Assumption \ref{cond-4}(i), so \eqref{w-2-*} is proven.\\
Clearly,
\bals
V(t)r(t)-V(t_0)r(t_0)=(V(t)-V(t_0))r(t)+V(t_0)(r(t)-r(t_0)).
\end{align*}
By Assumption \ref{con-eta} we get that $V(t_0)=O_P(T^{1/2})$ and therefore
$$
\sup_{|t-t_0|\leq C/\Delta}|V(t_0)(r(t)-r(t_0))|=O_P(T^{1/2}/\Delta).
$$
We note that  for all $t_0 \leq t\leq t_0+C/\Delta$
$$
|V(t)-V(t_0)|\leq \left|\sum_{s=t_0+1}^{t}\eta_s\right| +\frac{|t-t_0|}{T}\left|\sum_{s=1}^{T}\eta_s\right|
$$
and by Assumption \ref{con-eta} we have that $|\sum_{s=t_0+1}^{t}\eta_s|=O_P(1/\Delta^{1/2})$ and the process $\Delta^{1/2}\sum_{s=1}^{\lf u/\Delta\rf}\eta_s, 0\leq u\leq 1$ is tight in ${\mathcal D}[0,C]$. Thus by stationarity we get
$$
\sup_{t_0\leq t \leq C/\Delta}\left|\sum_{s=t_0+1}^{t}\eta_s\right|=O_P(1/\Delta^{1/2})
$$
and similar arguments can be used on $t_0-C/\Delta\leq t \leq t_0$. We conclude that
\beq\label{V-comp}
\sup_{|t-t_0|\leq C/\Delta}|V(t)-V(t_0)|=O_P(1/\Delta^{1/2}+T^{-1/2}/\Delta)=O_P(1/\Delta^{1/2})
\eeq
completing the proof of \eqref{w-6} on account $|{\frak s}|<\infty$.\\
With $\psi_i(t)=Q_i^2(t)-Q_i^2(t_0)-E(Q_i^2(t)-Q_i^2(t_0))$  we can write
\bals
\sup_{|t-t_0|\leq C/\Delta}&\frac{1}{T}\left|\sum_{i=1}^N(Q^2_i(t)-Q_i^2(t_0))\right|\\
&\leq \sup_{|t-t_0|\leq C/\Delta}\frac{1}{T}\left|\sum_{i=1}^N(EQ^2_i(t)-EQ_i^2(t_0))\right|
+\sup_{|t-t_0|\leq C/\Delta}\frac{1}{T}\left|\sum_{i=1}^N\psi_i(t)\right|.
\end{align*}
We obtain from the proof of Lemma \ref{first} that
$$
\sup_{|t-t_0|\leq C/\Delta}\frac{1}{T}\left|\sum_{i=1}^N(EQ^2_i(t)-EQ_i^2(t_0))\right|=O\left(\frac{N}{T\Delta}\right)=o(1).
$$
For every $t\in[t_0-C/\Delta, t_0+C/\Delta]$
we have that
$$
E\left(\frac{1}{T}\sum_{i=1}^N\psi_i(t)\right)^2\leq C\frac{1}{T^2}\sum_{i=1}^NE\psi_i^2(t)
$$
and
\bals
\sup_{|t-t_0|\leq C/\Delta}E\psi_i^2(t)&\leq 9\sup_{|t-t_0|\leq C/\Delta}\{ Q_i^2(t)(Q_i(t)-Q_i(t_0))^2
+Q_i^2(t_0)(Q_i(t)-Q_i(t_0))^2
\}\\
&=O(1)\left\{\sup_{|t-t_0|\leq C/\Delta} (EQ_i^4(t))^{1/2}\sup_{|t-t_0|\leq C/\Delta}(E(Q_i(t)-Q_i(t_0))^4)^{1/2}
\right\}\\
&=O(T/\Delta).
\end{align*}
Next we show that $\sqrt{\Delta/(NT)}\sum_{i=1}^N\psi_i(u/\Delta)$ is tight in ${\mathcal D}[-C,C]$. Using Rosenthal's inequality (cf.\ Petrov (1995, p.\ 59)) we obtain that
\bals
E\left|\sum_{i=1}^N(\psi_i(t)-\psi_i(s))\right|^{\kappa/2}\leq c\left\{\sum_{i=1}^N E|\psi_i(t)-\psi_i(s)|^{\kappa/2}+\left(\sum_{i=1}^NE(\psi_i(t)-\psi_i(s))^2
\right)^{\kappa/4}
\right\}
\end{align*}
with some constant $c$. It is easy to see that
$$
|\psi_i(t)-\psi_i(s)|\leq \{|Q_i(t)(Q_i(t)-Q_i(s))|+|Q_i(s)(Q_i(t)-Q_i(s))|+|EQ_i^2(t)-EQ_i^2(s)|\}
$$
and
$$
|EQ_i^2(t)-EQ_i^2(s)|\leq c |t-s|.
$$
By the Cauchy--Schwarz inequality and Assumption \ref{cond-1}(ii) we have for all
$t,s \in[t_0-C/\Delta, t_0+C/\Delta]$
\bals
E(Q_i(t)(Q_i(t)-Q_i(s)))^2\leq (EQ_i^4(t)E(Q_i(t)-Q_i(s))^4)^{1/2}\leq c T|t-s|
\end{align*}
and therefore
$$
E(\psi_i(t)-\psi_i(s))^2\leq c T|t-s|
$$
where $c$ is a constant. Also, for $\kappa$ of Assumption \ref{cond-1}(ii)  we have
\bals
E|Q_i(t)(Q_i(t)-Q_i(s))|^{\kappa/2}\leq \{ E|Q_i(t)|^{\kappa}E|Q_i(t)-Q_i(s)|^{\kappa}\}^{1/2}\leq c\{ U_{i,\kappa}(t)U_{i,\kappa}(|t-s|)\}^{1/2}
\end{align*}
with some constant $c$. Thus we get via Assumption \ref{cond-1}(ii) that
$$
E\left\{\left|\left(\frac{\Delta}{NT}\right)^{1/2}\sum_{i=1}^N(\psi_i(u/\Delta)-\psi_i(v/\Delta))\right|^{\kappa/2}\right\}\leq c|u-v|^{\kappa/4},
$$
establishing tightness by Billingsley (1968, pp.\ 95 and 127). This also completes the proof of \eqref{w-3}.
\\
Following the arguments in the proof of \eqref{w-6} one can show that
\begin{align*}
\sup_{|t-t_0|\leq C/\Delta}|V^2(t)-V^2(t_0)|\leq 2\sup_{|t-t_0|\leq C/\Delta}|V(t)-V(t_0)|\sup_{|t-t_0|\leq C/\Delta}|V(t)|
=O_P(T^{1/2}/\Delta^{1/2}),
\end{align*}
and therefore \eqref{w-4} follows from Assumption \ref{cond-4}(ii).
To prove \eqref{w-5} we first write
\bals
Q_i(t)V(t)-Q_i(t_0)V(t_0)=V(t)(Q_i(t)-Q_i(t_0))+Q_i(t_0)(V(t)-V(t_0)).
\end{align*}
Repeating  the arguments used in the proof of \eqref{w-3} we obtain that
$$
\frac{1}{T}\sup_{|t-t_0|\leq C/\Delta}\left|V(t)\sum_{i=1}^N\gamma_i (Q_i(t)-Q_i(t_0))\right|=O_P(1)\left(\frac{\Gamma_{N,T}}{T\Delta}
\right)^{1/2}=o_P(1)
$$
via applying Assumption \ref{cond-4}(ii) and  $T\Delta\to \infty$.
\end{proof}

\medskip
Let
$$
R_{N,T}(u)=\sum_{i=1}^N\delta_i\calS_i(u/\Delta_{N,T}),\;\;u\geq 0,
$$
where $\calS_i(\cdot)$ is defined in \eqref{crlS}.

\medskip
\begin{lemma}\label{weak} If Assumptions \ref{b-1}, \ref{cond-1}, \ref{cond-tau}, \eqref{bar-tau} and \eqref{w-1} hold, then we have
$$
R_{N,T}(u)\;\;\stackrel{{\mathcal D}[0,C]}{\longrightarrow}\;\;\sigma W(u),
$$
for all $C>0$, where $W(u)$ stands for a Wiener process.
\end{lemma}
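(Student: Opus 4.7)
The plan is to prove convergence in $\mathcal{D}[0,C]$ by establishing (i) convergence of the finite dimensional distributions, and (ii) tightness, which together with the continuity of the limit suffice. The identification of the limit as $\sigma W$ follows from the Gaussian finite dimensional distributions having the covariance function $\sigma^2\min(u,v)$.

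For finite dimensional convergence, I would fix $0\le u_1<u_2<\cdots<u_k\le C$ and real numbers $\lambda_1,\dots,\lambda_k$, and write, with $t_j=\lfloor u_j/\Delta_{N,T}\rfloor$,
$$
\sum_{j=1}^k\lambda_j R_{N,T}(u_j)=\sum_{i=1}^N\delta_i Y_i,\qquad Y_i=\sum_{j=1}^k\lambda_j\calS_i(t_j),
$$
which is a sum of row-wise independent random variables by Assumption \ref{b-1}(i). Since $\{e_{i,s}\}$ is stationary, $E(\calS_i(t_l)-\calS_i(t_j))^2=U_{i,2}(|t_l-t_j|)$, hence the polarization identity yields $E[\calS_i(t_j)\calS_i(t_l)]=\tfrac12\bigl(U_{i,2}(t_j)+U_{i,2}(t_l)-U_{i,2}(|t_l-t_j|)\bigr)$. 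Assumption \ref{cond-1}(i) and the definition of $\sigma^2$ then give
$$
\mathrm{Var}\Bigl(\sum_{j}\lambda_j R_{N,T}(u_j)\Bigr)\;\longrightarrow\;\sigma^2\sum_{j,l}\lambda_j\lambda_l\min(u_j,u_l),
$$
which is exactly $\mathrm{Var}(\sigma\sum_j\lambda_j W(u_j))$. The Lyapunov condition with exponent $\bar{\tau}$ follows from Minkowski and Assumption \ref{cond-tau}(ii), giving $E|Y_i|^{\bar{\tau}}=O((C/\Delta_{N,T})^{\bar{\tau}/2})$ uniformly in $i$, so
$$
\frac{\sum_{i=1}^N|\delta_i|^{\bar{\tau}}E|Y_i|^{\bar{\tau}}}{(\sum_i\delta_i^2 E Y_i^2)^{\bar{\tau}/2}}\;\le\; c\,\frac{\sum_{i=1}^N|\delta_i|^{\bar{\tau}}}{\Delta_{N,T}^{\bar{\tau}/2}}\;\longrightarrow\;0
$$
by assumption \eqref{bar-tau}. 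Lyapunov's CLT combined with the Cram\'er--Wold device identifies the limiting finite dimensional distributions.

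For tightness, I take $0\le v\le u\le C$, set $s=\lfloor v/\Delta_{N,T}\rfloor$, $t=\lfloor u/\Delta_{N,T}\rfloor$, and apply Rosenthal's inequality (Petrov (1995, p.\ 59)) to the $i$-independent sum $R_{N,T}(u)-R_{N,T}(v)=\sum_i\delta_i(\calS_i(t)-\calS_i(s))$ with exponent $\bar{\tau}$. Together with Assumption \ref{cond-1}(i) and Assumption \ref{cond-tau}(ii) this yields
$$
E|R_{N,T}(u)-R_{N,T}(v)|^{\bar{\tau}}\;\le\; c(t-s)^{\bar{\tau}/2}\sum_{i=1}^N|\delta_i|^{\bar{\tau}}+c\bigl((t-s)\Delta_{N,T}\bigr)^{\bar{\tau}/2}.
$$
For $u-v\ge\Delta_{N,T}$ we have $(t-s)\Delta_{N,T}\le 2(u-v)$, so the second term is $O((u-v)^{\bar{\tau}/2})$, while the first equals $\bigl(\sum_i|\delta_i|^{\bar{\tau}}/\Delta_{N,T}^{\bar{\tau}/2}\bigr)\cdot((t-s)\Delta_{N,T})^{\bar{\tau}/2}=o(1)(u-v)^{\bar{\tau}/2}$ by \eqref{bar-tau}. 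Hence $E|R_{N,T}(u)-R_{N,T}(v)|^{\bar{\tau}}\le c(u-v)^{\bar{\tau}/2}$ for $u-v\ge\Delta_{N,T}$; since $\bar{\tau}/2>1$ this gives tightness via Billingsley's moment criterion (Billingsley (1968, p.\ 95)), the discrete scale $\Delta_{N,T}\to 0$ posing no obstruction because $R_{N,T}$ is piecewise constant between jump points.

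The main obstacle is that within a single panel the increments $\calS_i(t_j)-\calS_i(t_{j-1})$ over disjoint time blocks need not be independent; so the asymptotic independence of the increments of $W$ cannot be inherited from an invariance principle on each individual panel (none is assumed). Instead, it must emerge entirely from the uniform long-run variance asymptotic $U_{i,2}(t)=t\sigma_i^2+o(t)$ of Assumption \ref{cond-1}(i), as encoded in the covariance calculation above. Once this Gaussian covariance structure is in hand, the piece-by-piece use of Rosenthal and Lyapunov with the panel-averaging provided by the weights $\delta_i$ carries the remainder of the argument.
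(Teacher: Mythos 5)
Your proof is correct and follows essentially the same route as the paper's: Cram\'er--Wold plus Lyapunov's CLT (with exponent $\bar{\tau}$ and condition \eqref{bar-tau}) for the finite dimensional distributions, and Rosenthal's inequality with Billingsley's moment criterion for tightness. The only cosmetic difference is that you apply Cram\'er--Wold to the values $R_{N,T}(u_j)$ and obtain the covariance $\sigma^2\min(u_j,u_l)$ by polarization from stationarity and Assumption \ref{cond-1}(i), whereas the paper works with increments and cites Assumption \ref{cond-tau}(i) to control the cross terms; both are valid.
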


\begin{proof} For the sake of notational simplicity  we write $\Delta=\Delta_{N,T}$. Let $0=u_0<u_1<u_2<\ldots <u_k\leq C$ and
$\alpha_1, \alpha_2, \ldots , \alpha_k$. Under \eqref{w-1}  we write
$$
\sum_{\ell=1}^k \alpha_\ell(R_{ N,T}(u_\ell)- R_{ N,T}(u_{\ell-1}))=\sum_{i=1}^N\sum_{\ell=1}^k\alpha_\ell\delta_i(\calS_i(u_{\ell}/\Delta)-\calS_i(u_{\ell-1}/\Delta)).
$$
Using Assumptions \ref{b-1}, \ref{cond-1}, \ref{cond-tau}(i) and \eqref{w-1}, we get that
\begin{align*}
\sum_{i=1}^NE\left(\sum_{\ell=1}^k\alpha_\ell\delta_i(\calS_i(u_{\ell}/\Delta)-\calS_i(u_{\ell-1}/\Delta))\right)^2=\sigma^2 \sum_{\ell=1}^k \alpha_\ell^2(u_\ell-u_{{\ell-1}})(1+o(1)).
\end{align*}
Also, Assumptions \ref{b-1}(ii) and \ref{cond-tau}(ii) imply
\begin{align}\label{rose-1}
\sum_{i=1}^N&E\left|\delta_i\sum_{\ell=1}^k\alpha_\ell(\calS_i(u_{\ell}/\Delta)-\calS_i(u_{\ell-1}/\Delta))\right|^{\bar{\tau}}\\
&\leq c\sum_{i=1}^N|\delta_i|^{\bar{\tau}}\max_{1\leq \ell\leq k}U_{i,\bar{\tau}}(|u_\ell-u_{\ell-1}|/\Delta)
\notag\\
&\leq c\left\{
\Delta^{-\bar{\tau}/2}\sum_{i=1}^N|\delta_i|^{\bar{\tau}} \right\}\notag
\end{align}
So using Lyapunov's theorem (cf.\ Petrov (1995,\ p.\ 154)) we conclude  via \eqref{bar-tau} that
$$
\sum_{\ell=1}^k \alpha_\ell(R_{ N,T}(u_\ell)- R_{ N,T}(u_{\ell-1}))\;\;\stackrel{{\mathcal D}}{\to}\;\;\;
\sigma\sum_{\ell=1}^k \alpha_\ell(W(u_\ell)- W(u_{\ell-1})),
$$
where $W$ stands for a Wiener process. Applying the Cram\'er--Wold theorem (cf.\ Billingsley (1968, p.\ 49)) we obtain that the finite dimensional distributions of $R_{N,T}(u)$ converge to that of $\sigma W(u)$. Next we show that $R_{N,T}(u)$ is tight in ${\mathcal D}[0,C]$. Following the arguments in \eqref{rose-1}, Rosenthal's inequality yields for all $0\leq u,  v \leq C$
\begin{align*}
E|R_{N,T}(u)&-R_{N,T}(v)|^{\bar{\tau}}\\
&\leq c \Delta^{-\bar{\tau}/2}\left\{\sum_{i=1}^N|\delta_i|^{\bar{\tau}}E|\calS_i(u/\Delta)-\calS_i(v/\Delta)|^{\bar{\tau}}
+\left(\sum_{i=1}^N\delta_i^2E(\calS_i(u/\Delta)-\calS_i(v/\Delta))^2\right)^{\bar{\tau}/2}
\right\}\\
&\leq c \left\{\sum_{i=1}^N|\delta_i|^{\bar{\tau}}U_{i,\bar{\tau}}(|u-v|/\Delta)
+\left(\sum_{i=1}^N\delta_i^2U_{i,2}(|u-v|/\Delta)\right)^{\bar{\tau}/2}
\right\}\\
&\leq c\left\{\Delta^{-\bar{\tau}/2} \sum_{i=1}^N|\delta_i|^{\bar{\tau}}+1\right\}|u-v|^{\bar{\tau}/2}\\
&\leq c |u-v|^{\bar{\tau}/2}
\end{align*}
on account of Assumption \ref{cond-tau}(ii) and \eqref{bar-tau}. The tightness now follows from Billingsley (1968,\ p.\ 127).
\end{proof}

\begin{lemma}\label{weak-1} If Assumptions \ref{b-1}, \ref{cond-1}, \ref{cond-tau} and \eqref{bar-tau}  hold, then for all integers $0<t_1<t_2<\ldots <t_K$ we have that
$$
\left( \sum_{i=1}^N\delta_i\calS_i(t_\ell), 1\leq \ell\leq K\right) \;\;\stackrel{{\mathcal D}}{\longrightarrow}\;\;\left( {\frak G}(t_\ell), 1\leq \ell \leq K \right),
$$
where the Gaussian process ${\frak G}(t), t=0, \pm 1, \pm 2. \ldots$ is defined in Theorem \ref{w-fac}.
\end{lemma}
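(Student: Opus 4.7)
The plan is to apply the Cram\'er--Wold device together with the Lyapunov central limit theorem for triangular arrays of independent random variables. Fix real coefficients $\alpha_1,\ldots,\alpha_K$ and set
$$
Y_{i,N} = \delta_i \sum_{\ell=1}^K \alpha_\ell \calS_i(t_\ell),\quad 1\le i \le N,
$$
so that the linear combination of interest is $\sum_{\ell=1}^K \alpha_\ell \sum_{i=1}^N \delta_i \calS_i(t_\ell) = \sum_{i=1}^N Y_{i,N}$. By Assumption \ref{b-1}(i) the variables $Y_{1,N},\ldots,Y_{N,N}$ are independent, and they are mean zero since $Ee_{i,s}=0$.

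Next, I would identify the limiting variance. Because the $t_\ell$ are fixed positive integers, $\calS_i(t_\ell)=\calU_i(t_\ell)$; Assumption \ref{cond-1}(i) together with the Cauchy--Schwarz inequality then yields $|E\calU_i(t_\ell)\calU_i(t_m)|\le C$ uniformly in $i$ and in $\ell,m$. By independence across $i$,
$$
E\left(\sum_{i=1}^N Y_{i,N}\right)^{\!2}
= \sum_{\ell,m=1}^K \alpha_\ell \alpha_m \sum_{i=1}^N \delta_i^2\, E\calU_i(t_\ell)\calU_i(t_m)
\longrightarrow \sum_{\ell,m=1}^K \alpha_\ell \alpha_m\, \calu(t_\ell,t_m),
$$
by the very definition \eqref{covcal} of $\calu(\cdot,\cdot)$, and this limit equals the variance of $\sum_{\ell=1}^K \alpha_\ell {\frak G}(t_\ell)$.

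Now I would verify Lyapunov's condition with exponent $\bar\tau>2$ taken from Assumption \ref{cond-tau}(ii). By Minkowski's inequality and Assumption \ref{cond-tau}(ii), for each fixed $t_\ell$ we have $\max_{1\le i\le N} E|\calS_i(t_\ell)|^{\bar\tau} = O(1)$, so
$$
\sum_{i=1}^N E|Y_{i,N}|^{\bar\tau} \le c \sum_{i=1}^N |\delta_i|^{\bar\tau} = o\bigl(\Delta_{N,T}^{\bar\tau/2}\bigr)
$$
by the hypothesis \eqref{bar-tau}. Coupled with the fact that the variance computed above has order at most a constant multiple of $\Delta_{N,T}$, the Lyapunov ratio vanishes, and the Lyapunov CLT (cf.\ Petrov (1995, p.\ 154)) gives that $\sum_{i=1}^N Y_{i,N}$ converges in distribution to the centered Gaussian with variance $\sum_{\ell,m}\alpha_\ell\alpha_m \calu(t_\ell,t_m)$. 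Since the $\alpha_\ell$'s were arbitrary, the Cram\'er--Wold theorem (cf.\ Billingsley (1968, p.\ 49)) delivers the claimed joint weak convergence.

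The main technical point will be handling the case in which the variance does not stay bounded away from zero or in which $\Delta_{N,T}$ is bounded but the limit covariance $\calu$ may degenerate; this is dealt with by passing to a subsequence along which $\sum_i \delta_i^2 E\calU_i(t_\ell)\calU_i(t_m)$ converges for every pair $\ell,m$, and noting that the Lyapunov bound $\sum_i|\delta_i|^{\bar\tau}=o(\Delta_{N,T}^{\bar\tau/2})$ in \eqref{bar-tau} is exactly what is needed regardless of whether $\Delta_{N,T}$ remains bounded or tends to a finite positive limit.
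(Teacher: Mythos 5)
Your argument is correct and is essentially the paper's own proof: the authors simply point back to the first half of the proof of Lemma \ref{weak}, i.e.\ the Cram\'er--Wold device combined with Lyapunov's central limit theorem, with the Lyapunov numerator bounded by $c\sum_{i=1}^N|\delta_i|^{\bar{\tau}}$ exactly as in \eqref{rose-1} and made negligible by \eqref{bar-tau}. One small wording point: what makes the Lyapunov ratio vanish is that the limiting variance is positive while the numerator is $o(\Delta_{N,T}^{\bar{\tau}/2})=o(1)$ (here $\Delta_{N,T}=O(1)$), so you need the variance bounded \emph{below}, not the upper bound ``at most a constant multiple of $\Delta_{N,T}$'' that you invoke --- but your closing paragraph's treatment of the degenerate case (where one falls back on Chebyshev) covers this.
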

\begin{proof} We repeat the first half of the proof of Lemma \ref{weak}. The result of Lemma \ref{weak-1} follows from \eqref{rose-1} and Lyapunov's central limit theorem due to assumption \eqref{bar-tau}.
\end{proof}

\noindent
{\it Proof of Theorem \ref{w-fac}.} Let $\Delta=\Delta_{N,T}$. It follows from Assumption \ref{b-1}(ii) and Lemma \ref{weak} that for all $C>0$
\beq\label{cal-conv}
 \sum_{i=1}^N\delta_i(\calS(t_0+u/\Delta)-\calS(t_0))\;\;\;\stackrel{{\mathcal D}[-C,C]}{\longrightarrow}\;\;\;\;\sigma W(u),
\eeq
where $W(u), -\infty<u<\infty$ is a two sided Wiener process. Also, $\Delta g_\theta(u/\Delta)=g_\theta(u)$ and since ${\frak s}=0$ by \eqref{V-comp} we have that
 \beq\label{smalls}
\Sigma_{N,T} \sup_{|t-t_0|\leq \Delta}|V(t)-V(t_0)|=o_P(1).
\eeq
By Lemma \ref{lem-w} we conclude that  for all $C>0$
\beq\label{cont-1}
\frac{1}{T}\left( U_N(t_0+u/\Delta) -U_N(t_0) \right)\;\;\;\stackrel{{\mathcal D}[-C,C]}{\longrightarrow}\;\;\;\;2\theta(1-\theta)(\sigma W(u)-g_\theta(u)).
\eeq
By the continuous mapping theorem we conclude from \eqref{cont-1} that for all $C$
\beq\label{cont-2}
\mbox{argmax}_{|t-t_0|\leq C/\Delta}\left( U_N(t_0+u/\Delta) -U_N(t_0) \right)\;\;\;\stackrel{{\mathcal D}}{\longrightarrow}\;\;\;\;\mbox{argmax}_{|u|\leq C}(\sigma W(u)-g_\theta(u)).
\eeq
According to the law of iterated logarithm, we have that
\beq\label{cont-3}
\lim_{C\to \infty} \mbox{argmax}_{|u|\leq C}(\sigma W(u)-g_\theta(u))\;\;\;\to\;\;\;\mbox{argmax}_{u}(\sigma W(u)-g_\theta(u))\;\;\;\mbox{a.s.}
\eeq
Now \eqref{f-1} follows from Lemma \ref{rate-con}, \eqref{cont-2} and \eqref{cont-3}.\\
It follows from  Assumption \ref{b-1}(ii), \eqref{smalls} and Lemmas \ref{lem-w}, \ref{weak-1} that for every integer  $C>0$
\begin{align}\label{dist-1}
&\left\{\frac{1}{T}\left( U_N(t_0+t) -U_N(t_0) \right), \;t=0, \pm 1, \pm 2, \ldots ,\pm C\right\}\\
&\;\;\;\;\;\;\;\stackrel{{\mathcal D}}{\longrightarrow}\;\;\;\;\left\{2\theta(1-\theta)({\frak G}(t)-{\frak d}g_\theta(t)), \; t=0, \pm 1, \pm 2, \ldots ,\pm C\right\}.\notag
\end{align}
Observing that $\calu(t,t)=O(t)$, the normality of ${\frak G}(t)$ with the Borel--Cantelli lemma yields that
$
\lim_{|t|\to \infty}{\frak G}(t)/t =0,
$
and therefore
$$
\lim_{C\to \infty}\mbox{argmax}_{|t|\leq C}({\frak G}(t)-{\frak d}g_\theta(t))=\mbox{argmax}_{t}({\frak G}(t)-{\frak d}g_\theta(t))\;\;\;\;\mbox{a.s.}
$$
The proof of \eqref{f-2} is now completed via Lemma \ref{rate-con}.
\qed
\\

\medskip
\noindent
{\it Proof of Theorem \ref{w-fac-mid}.} It follows from Assumption \ref{total} that the  Wiener processes in Assumption \ref{weak-v} and \eqref{cal-conv} are independent. Hence Lemma \ref{lem-w} yields for all $C>0$ that
$$
\frac{1}{T}\left( U_N(t_0+u/\Delta) -U_N(t_0) \right)\;\;\;\stackrel{{\mathcal D}[-C,C]}{\longrightarrow}\;\;\;\;2\theta(1-\theta)(\sigma^2+{\frak s}^2)^{1/2} W(u)-g_\theta(u)),
$$
where $W(u), -\infty<u<\infty$ is a two--sided Wiener process. Arguments used in \eqref{cont-2} and \eqref{cont-3} could be repeated to finish the proof of \eqref{f-1-m}.\\
Referring  again to Assumption \ref{total} it is immediate that the Gaussian process ${\frak G}(t)$ and ${\mathcal V}(t)$ are independent. So applying Lemma \ref{lem-w} we replace \eqref{dist-1} with
\begin{align*}
&\left\{\frac{1}{T}\left( U_N(t_0+t) -U_N(t_0) \right), \;t=0, \pm 1, \pm 2, \ldots ,\pm C\right\}\\
&\;\;\;\;\;\;\;\stackrel{{\mathcal D}}{\longrightarrow}\;\;\;\;\left\{2\theta(1-\theta)({\frak G}(t)+{\frak s}{\frak d}^{1/2}{\mathcal V}(t)-{\frak d}g_\theta(t)), \; t=0, \pm 1, \pm 2, \ldots ,\pm C\right\}\notag
\end{align*}
for any $C>0$. Observing that ${\mathcal V}(t)/t \to 0$ a.s.\ we need only minor modifications of the proof of \eqref{f-2} to complete the proof of \eqref{f-2-m}.
\qed
\medskip

\begin{lemma}\label{rate-con-large} We assume that Assumptions \ref{b-1}--\ref{m-cho-1} hold, and $|{\frak s}|=\infty.$
Then, as $N,T\to \infty$ we have that
\beq\label{rateconsig}
 |\hat{t}_{N,T}-t_0|=O_P(M_{N,T}),
\eeq
where
$M_{N,T}= (\Sigma_{N,T}/\Delta_{N,T})^2.$
\end{lemma}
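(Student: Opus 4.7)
My plan is to mimic the proof of Lemma \ref{rate-con}, replacing the threshold $M=C/\Delta_{N,T}$ by the larger threshold $M = CM_{N,T} = C(\Sigma_{N,T}/\Delta_{N,T})^2$ for a constant $C>0$ to be chosen large. Since $|\mathfrak{s}|=\infty$ means $\Sigma_{N,T}^2/\Delta_{N,T}\to\infty$, we have $M_{N,T}\Delta_{N,T}=\mathfrak{s}^2\cdot(1+o(1))\to\infty$, so in particular $M\gg 1/\Delta_{N,T}$, matching the intuition that in this regime the common factor creates much larger fluctuations than the idiosyncratic noise. As in Lemma \ref{rate-con}, Lemma \ref{first} lets me restrict attention to $\lfloor T\alpha\rfloor\le t\le \lfloor T(1-\alpha)\rfloor$ for some $0<\alpha<\theta<1-\alpha$, and then use decomposition \eqref{deco} to separate the quadratic drift term $\Delta_{N,T}(r^2(t)-r^2(t_0))$ from the five noise terms controlled by Lemmas \ref{qq-0}--\ref{ga-de}.

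The drift has the right sign and by Lemma \ref{tt-0} satisfies $[r^2(t_0)-r^2(t)]/|t-t_0|\ge c_2 T$, so it contributes at least $c_2 T\Delta_{N,T}|t-t_0|$. I will show that, uniformly over $|t-t_0|\ge M$, each of the five noise terms divided by $|t-t_0|$ is $o_P(T\Delta_{N,T})$, which via an argument identical to the end of Lemma \ref{rate-con} forces $\hat t_{N,T}$ to lie within $M$ of $t_0$ with probability tending to one.  For the noise terms from Lemmas \ref{qq-0} and \ref{r-q}, the required bounds follow exactly as in Lemma \ref{rate-con} from Assumption \ref{cond-4}(i) and Assumption \ref{m-cho-1}(iii); the $(NT/M)^{1/2}$ contribution is only improved by enlarging $M$.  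For Lemma \ref{gV}, the $\Gamma_{N,T}$ and $\Gamma_{N,T}(\log(T/M))^{2/\bar\kappa}$ terms are $o(T\Delta_{N,T})$ by Assumptions \ref{cond-4}(ii) and \ref{m-cho-1}(ii), while the $\Gamma_{N,T} TM^{-1/2}$ term requires the bound $\Gamma_{N,T}/(T^{1/2}\Delta_{N,T}^{1/2}\mathfrak{s}_{N,T})\to 0$, where $\mathfrak{s}_{N,T}=\Sigma_{N,T}/\Delta_{N,T}^{1/2}$; this reduces via Cauchy--Schwarz ($\mathfrak{s}_{N,T}\le \Gamma_{N,T}^{1/2}$) and Assumption \ref{cond-4}(ii) to $\Gamma_{N,T}=o((T\Delta_{N,T})^{1/2})$.

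The key (and the one the choice of $M_{N,T}$ is designed for) is Lemma \ref{ga-de}, whose bound on the mixed $\gamma_i\delta_i V r$ term equals $O_P\bigl((T^{1/2}+TM^{-1/2})|\Sigma_{N,T}|\bigr)$. The $TM^{-1/2}|\Sigma_{N,T}|$ piece evaluates, at $M=CM_{N,T}$, to $T\Delta_{N,T}/\sqrt{C}$, so it is a fraction $1/\sqrt{C}$ of the drift and can be made arbitrarily small. The residual $T^{1/2}|\Sigma_{N,T}|$ piece must be $o(T\Delta_{N,T})$, i.e.\ $|\Sigma_{N,T}|/\Delta_{N,T}=o(T^{1/2})$, which again follows from Cauchy--Schwarz and Assumption \ref{cond-4}(ii): $|\Sigma_{N,T}|/\Delta_{N,T}\le (\Gamma_{N,T}/\Delta_{N,T})^{1/2}=o((T/\Delta_{N,T})^{1/4})=o(T^{1/2})$ thanks to $T\Delta_{N,T}\to\infty$. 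Lemma \ref{ga-q} is handled analogously, with the cross-ratio $\Gamma_{N,T}^{1/2}/\mathfrak{s}_{N,T}^{2}$ kept bounded by combining Cauchy--Schwarz with $|\mathfrak{s}|=\infty$.

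The main obstacle, to my mind, is keeping track of the interlocking inequalities relating $\Sigma_{N,T}$, $\Gamma_{N,T}$, $\Delta_{N,T}$, $N$ and $T$: the choice $M=CM_{N,T}$ is exactly the smallest threshold for which the dominant cross-term in Lemma \ref{ga-de} becomes manageable, and one has to check that all the other noise terms (which were already handled in Lemma \ref{rate-con} at the smaller threshold $1/\Delta_{N,T}$) remain subdominant at the larger $M$. Once this bookkeeping is carried out uniformly over $|t-t_0|\ge M$, the same argument that concludes Lemma \ref{rate-con}, namely
\[
\lim_{C\to\infty}\liminf_{N,T\to\infty}P\!\left\{\sup_{|t-t_0|\ge CM_{N,T}}\left[\Delta_{N,T}(r^2(t)-r^2(t_0))+\text{noise}\right]<0\right\}=1,
\]
yields \eqref{rateconsig}.
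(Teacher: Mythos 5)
Your proposal is correct and follows exactly the paper's route: the paper's own proof of this lemma consists of reducing to $\tilde t_{N,T}(\alpha)$ via Lemma \ref{first} and then invoking Lemmas \ref{tt-0}--\ref{ga-de} with $M=(\Sigma_{N,T}/\Delta_{N,T})^2$, which is precisely the bookkeeping you carry out (the cancellation $TM^{-1/2}|\Sigma_{N,T}|=T\Delta_{N,T}/\sqrt{C}$ in the Lemma \ref{ga-de} term being the decisive point). The only cosmetic slip is the direction of the Cauchy--Schwarz bound $\mathfrak{s}_{N,T}\le\Gamma_{N,T}^{1/2}$ invoked for the Lemma \ref{gV} term; the term vanishes directly from $\Gamma_{N,T}=o((T\Delta_{N,T})^{1/2})$ together with $|\mathfrak{s}_{N,T}|\to\infty$, so nothing is lost.
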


\begin{proof}  By Lemma \ref{first} it is enough to prove that for all $0<\alpha<\theta$
\beq\label{rateconsig-1}
 |\tilde{t}_{N,T}(\alpha)-t_0|=O_P(M_{N,T}).
\eeq
The result follows from Lemmas \ref{tt-0}--\ref{ga-de} with $M=(\Sigma_{N,T}/\Delta_{N,T})^2.$
\end{proof}
\medskip
\noindent
{\it Proof of Theorem \ref{w-fac-strong}.} Let $M=(\Sigma_{N,T}/\Delta_{N,T})^2$. Since $\Delta_{N,T}$ is bounded, by \eqref{sidestrong} we have
\beq\label{bounded}
\frac{\Sigma_{N,T}}{\Delta_{N,T}}\to \infty.
\eeq
Following the proof of Lemma \ref{lem-w} one can show that for all $C>0$
\begin{align}\label{w-2-**}
\sup_{|t-t_0|\leq CM}\biggl|\frac{1}{T}\sum_{i=1}^N\delta_i(Q_i(t)r(t)-Q_i(t_0)r(t_0))
\biggl|
=o_P(1),
\end{align}
\beq\label{w-3-*}
\sup_{|t-t_0|\leq CM}\left|\frac{1}{T}\sum_{i=1}^N(Q_i^2(t)-Q_i^2(t_0))\right|=o_P(1),
\eeq
\beq\label{w-4-*}
\sup_{|t-t_0|\leq CM}\left|\frac{1}{T}\sum_{i=1}^N\gamma_i^2(V^2(t)-V^2(t_0))\right|=o_P(1)
\eeq
and
\beq\label{w-5-*}
\sup_{|t-t_0|\leq CM}\left|\frac{1}{T}\sum_{i=1}^N \gamma_i(Q_i(t)V(t)-Q_i(t_0)V(t_0))\right|=o_P(1).
\eeq
It follows from Assumptions \ref{con-eta} and \ref{weak-v}  that for all $C>0$
\begin{align*}
&\frac{1}{T}\left(\Delta_{N,T}(r^2(t_0+uM)-r^2(t_0))+2\Sigma_{N,T}(V(t_0+uM)r(t_0+uM)-V(t_0)r(t_0))\right)\\
&\quad \quad\;\;\;\stackrel{{\mathcal D}[-C,C]}{\longrightarrow}\;\;\;\;2\theta(1-\theta)( W(u)-g_\theta(u)),
\end{align*}
where $W(u), -\infty <u<\infty$ denotes a two--sided Wiener process. Using now \eqref{w-2-**}--\eqref{w-5-*}
$$
\frac{1}{T}\left( U_N(t_0+uM) -U_N(t_0) \right)\;\;\;\stackrel{{\mathcal D}[-C,C]}{\longrightarrow}\;\;\;\;2\theta(1-\theta)( W(u)-g_\theta(u)).
$$
Arguments used in \eqref{cont-2} and \eqref{cont-3} could be repeated to complete the proof  of Theorem \ref{w-fac-strong}.
\qed

\section{Proof of Theorem \ref{th-cons}}\label{sec-cons} For the sake of brevity we use $\hat{t}$, $\Delta$ and $\Sigma$  for $\hat{t}_{N,T}$,  $\Delta_{N,T}$ and $\Sigma_{N,T}$, respectively  . We start with the proof of \eqref{d-cons}.
Let $M=M(N,T)$ be a sequence satisfying
\beq\label{m-def} M\to\infty, \quad
M/T\to 0,
\eeq
if the conditions of Theorem \ref{w-fac} or \ref{w-fac-mid} hold and
\beq\label{m-def-dep} M\to\infty, \quad
M/\min(T, \Sigma^2/\Delta^2)\to 0
\eeq
under the assumptions of Theorem \ref{w-fac-strong}. First we show that for  all $M$ satisfying   $M/T\to 0$  we have that
\beq\label{li-1}
\sup_{|u|\leq M}\left|\frac{1}{\Delta}\sum_{i=1}^N\left(\frac{1}{t_0+u}\sum_{1\leq t \leq t_0+u}X_{i,t}-\frac{1}{T-(t_0+u)}\sum_{t_0+u<t\leq T}X_{i,t}\right)^2-1\right|=o_P(1).
\eeq
Using \eqref{model} we have for all $0<u\leq M$
\begin{align*}
\frac{1}{t_0+u}&\sum_{1\leq t \leq t_0+u}X_{i,t}-\frac{1}{T-(t_0+u)}\sum_{t_0+u<t\leq T}X_{i,t}=\left(\frac{u}{t_0+u}-1\right)\delta_i+\frac{\gamma_i}{t_0+u}\sum_{1\leq t \leq t_{0}+u}\eta_t\\
&+\frac{1}{t_0+u}\sum_{1\leq t \leq t_{0}+u}e_{i,t}-\frac{\gamma_i}{T-(t_0+u)}\sum_{T-(t_0+u)<t\leq T}\eta_t-\frac{1}{T-(t_0+u)}\sum_{T-(t_0+u)<t\leq T}e_{i,t}.
\end{align*}
Since $M/T\to 0$, we have
\beq\label{li-2}
\sup_{0<u\leq M}\left|\left(\frac{t_0}{t_0+u}\right)^2-1\right|\to 0.
\eeq
Applying Assumption \ref{con-eta} with Markov's and the maximal inequality of M\'oritz et al (1982)
we obtained for all $z>0$
\begin{align}\label{li-3}
P\left\{\frac{\Gamma}{T^2\Delta}\sup_{0<u\leq M}\left(\sum_{1\leq t \leq t_{0}+u}\eta_t\right)^2\geq z
\right\}&=P\left\{\sup_{0<u\leq M}\left(\sum_{1\leq t \leq t_{0}+u}\eta_t\right)^{\bar{\kappa}}\geq (zT^2\Delta/\Gamma)^{\bar{\kappa}/2}
\right\}\\
&\leq \left(\frac{\Gamma}{T^2\Delta}\right)^{\bar{\kappa}/2}E\sup_{0<u\leq M}\left(\sum_{1\leq t \leq t_{0}+u}\eta_t\right)^{\bar{\kappa}}\notag\\
&=O(1)\left(\frac{\Gamma}{T\Delta}\right)^{\bar{\kappa/2}}\to 0\notag
\end{align}
on account of Assumption \ref{cond-4}. Following the proof of \eqref{li-3} but now using Assumptions \ref{b-1}(i) and \ref{cond-1}, we conclude
\begin{align}\label{li-4}
P\left\{\frac{1}{T^2\Delta}\sum_{i=1}^N\left(\sum_{1\leq t \leq t_{0}}e_{i,t}\right)^2\geq z
\right\}
&\leq \frac{1}{zT^2\Delta}\sum_{i=1}^NE\left(\sum_{1\leq t \leq t_{0}}e_{i,t}\right)^2 \\
&=O(1)\frac{N}{zT\Delta}\to 0\notag
\end{align}
by Assumption \ref{cond-4}(i). The stationarity in Assumption \ref{b-1}(ii) yields
\begin{align}\label{li-5}
P&\left\{\frac{1}{T^2\Delta}\sup_{0<u\leq M}\sum_{i=1}^N\left(\sum_{t_0+1\leq t \leq t_{0}+u}e_{i,t}\right)^2\geq z
\right\}\\
&=P\left\{\frac{1}{T^2\Delta}\sup_{0<u\leq M}\sum_{i=1}^N\left(\sum_{1\leq t \leq u}e_{i,t}\right)^2\geq z
\right\}\notag\\
&\leq \sum_{u=1}^MP\left\{\frac{1}{T^2\Delta}\sum_{i=1}^N\left(\sum_{1\leq t \leq u}e_{i,t}\right)^2\geq z
\right\}\notag\\
&=O(1)\frac{NM}{zT^2\Delta}\to 0\notag
\end{align}
by Assumption \ref{cond-4}(i) and the assumption that $M/T\to 0$. Putting together \eqref{li-4} and \eqref{li-5} we obtain that
\beq\label{li-6}
\sup_{0<u \leq M}\frac{1}{\Delta}\sum_{i=1}^N\left(\frac{1}{t_0+u}\sum_{1\leq t \leq t_0+u}e_{i,t}\right)^2=o_P(1).
\eeq
Following the proofs of  \eqref{li-3} and \eqref{li-6} one can prove that
\beq\label{li-7}
\sup_{0<u \leq M}\frac{1}{\Delta}\sum_{i=1}^N\left( \frac{\gamma_i}{T-(t_0+u)}\sum_{T-(t_0+u)<t\leq T}\eta_t  \right)^2=o_P(1)
\eeq
and
\beq\label{li-8}
\sup_{0<u \leq M}\frac{1}{\Delta}\sum_{i=1}^N\left( \frac{1}{T-(t_0+u)}\sum_{T-(t_0+u)<t\leq T}e_{i,t} \right)^2=o_P(1).
\eeq
The result in \eqref{li-1} now follows from \eqref{li-2}, \eqref{li-3} and \eqref{li-6}--\eqref{li-8}.
Since under our conditions
\beq\label{tt-con}
|\hat{t}-t_0|/M=o_P(1),
\eeq
 the proof of \eqref{d-cons} is complete.\\

By \eqref{summa} we have
\begin{align*}
U_N(\hat{t}+v)-U_N(\hat{t})=U_N^{(1)}(v)+\ldots +U_N^{(6)}(v),
\end{align*}
where
\begin{align*}
&U_N^{(1)}(v)=\sum_{i=1}^N\delta_i^2(r^2(\hat{t}+v)-r^2(\hat{t})), \quad U_N^{(2)}(v)=\sum_{i=1}^N(Q_i^2(\hat{t}+v)-Q_i^2(\hat{t})),\\
&U_N^{(3)}(v)=\sum_{i=1}^N\gamma_i^2(V^2(\hht+v)-V^2(\hht)), \quad U_N^{(4)}(v)=2\sum_{i=1}^N\gamma_i(Q_i(\hht+v)V(\hht+v)-Q_i(\hht)V(\hht)),\\
&U_N^{(5)}(v)=2\sum_{i=1}^N\delta_i(r(\hht+v)Q_i(\hht+v)-r(\hht)Q_i(\hht)),
\end{align*}
and
$$
U_N^{(6)}(v)=2\sum_{i=1}^N\delta_i\gamma_i(r(\hht+v)V(\hht+v)-r(\hht)V(\hht)).
$$
It follows from the proofs of Lemmas  \ref{qq-0}, \ref{gV}, \ref{ga-q} and \ref{lem-w} that for all $M$ satisfying \eqref{m-def} we have
$$
\sup_{|v|\leq M}\frac{1}{|v|\hat{r}^2_{N,T}}\left\{|U_N^{(2)}(v)| + |U_N^{(3)}(v)|+ |U_N^{(4)}(v)| \right\}^2=o_P(\Xi_{N,T}),
$$
where $\hat{r}_{N,T}$ is defined in \eqref{hat-r}. Applying now Lemmas \ref{r-q}, \ref{ga-de} and \ref{lem-w} we conclude
$$
\sup_{|v|\leq M}\frac{1}{|v|\hat{r}^2_{N,T}}\left| U_N^{(5)}(v)- U_N^{(7)}(v)\right|^2=o_P(\Xi_{N,T} )
$$
and
$$
\sup_{|v|\leq M}\frac{1}{|v|\hat{r}_{N,T}}\left| U_N^{(6)}(v)- U_N^{(8)}(v)\right|^2=o_P(\Xi_{N,T} ),
$$
where
$$
U_N^{(7)}(v)=2r(t_0)\sum_{i=1}^N\delta_i{\mathcal A}_i(v;\hht)\quad \mbox{and} \quad   U_N^{(8)}(v)=2r(t_0)\sum_{i=1}^N\delta_i\gamma_i{\mathcal B}(v;\hht),
$$
where
\begin{displaymath}
{\mathcal A}_i(v;t)=\left\{
\begin{array}{ll}
\displaystyle \sum_{s=t +1}^{t +v}e_{i,s},\quad &v>0
\vspace{.3cm}\\
0, &v=0
\vspace{.3cm}\\
\displaystyle\sum^{t -1}_{s=t +v}e_{i,s}, &v<0,
\end{array}
\right.
\end{displaymath}
and
\begin{displaymath}
{\mathcal B}(v;t)=\left\{
\begin{array}{ll}
\displaystyle \sum_{s=t +1}^{t +v}\eta_{s},\quad &v>0
\vspace{.3cm}\\
0, &v=0
\vspace{.3cm}\\
\displaystyle\sum^{t -1}_{s=t +v}\eta_{s}, &v<0,
\end{array}
\right.
\end{displaymath}
 Using \eqref{m-def} and \eqref{tt-con} one can verify that
$$
\sup_{ |v|\leq M}\frac{1}{|v|\hat{r}_{N,T}}\left| U_N^{(7)}(v)- U_N^{(9)}(v)\right|^2=o_P(\Xi_{N,T})
$$
and
$$
\sup_{ |v|\leq M}\frac{1}{|v|\hat{r}_{N,T}}\left| U_N^{(8)}(v)- U_N^{(10)}(v)\right|^2=o_P(\Xi_{N,T}),
$$
where
$$
U_N^{(9)}(v)=2r(t_0)\sum_{i=1}^N\delta_i{\mathcal A}_i(v;t_0)\quad \mbox{and} \quad   U_N^{(10)}(v)=2r(t_0)\sum_{i=1}^N\delta_i\gamma_i{\mathcal B}(v;t_0).
$$
Let $m=m(N,T)\leq M$ and $m\to \infty$, as $\min(N,T)\to \infty$. Using Assumptions \ref{cond-1} and \ref{con-eta}  we get
$$
 \sup_{m\leq |v|\leq M}\left|\frac{1}{4|v|r^2(t_0)}E\left(U_N^{(9)}(v)+U_N^{(10)}(v)\right)^2-\Xi_{N,T}\right|=o(\Xi_{N,T}).
$$
and
\beq\label{uni-v-0}
\sup_{|v|\leq m}\frac{1}{4|v|r^2(t_0)\Xi_{N,T}}\left(U_N^{(9)}(v)+U_N^{(10)}(v)\right)^2=O_P(1).
\eeq

We claim that
\beq\label{uni-v}
\sup_{m\leq |v|\leq M}\left|\frac{1}{4|v|r^2(t_0)\Xi_{N,T}}\left(U_N^{(9)}(v)+U_N^{(10)}(v)\right)^2-1\right|=o_P(1).
\eeq
The statement in \eqref{uni-v} is a uniform weak law of large numbers, so we can repeat the proof  of \eqref{li-2}  to prove it. Namely, due to stationarity, it follows from Assumptions \ref{cond-1} and \ref{con-eta} that for every $v\in [-M, \ldots , -m, m, \ldots ,M]$ that
\beq\label{uni-v-1}
\frac{1}{4|v|r^2(t_0)\Xi_{N,T}}\left(U_N^{(9)}(v)+U_N^{(10)}(v)\right)^2\;\;\stackrel{P}{\to}\;\;1.
\eeq
Now \eqref{uni-v} follows from \eqref{uni-v-1} if $ \left(U_N^{(9)}(v)+U_N^{(10)}(v)\right)^2/(|v|r^2(t_0)\Xi_{N,T}), m\leq |v|\leq M$ is tight. The tightness can be proven along the lines of  the proofs of Lemmas \ref{r-q} and \ref{ga-de}. Our arguments show that
\begin{align}\label{ff-1}
\sup_{ |v|\leq M}\Biggl| \frac{1}{4|v|\hat{r}^2_{N,T}}&\left(U_N(\hat{t}+v)-U_N(\hht)-\Delta_{N,T}(r^2(\hht+v)-r^2(\hht))\right)^2\\
&-\frac{1}{|v|r(t_0)}\left(U_N^{(9)}(v)+U_N^{(10)}(v)\right)^2\Biggl|=o_P(\Xi_{N,T}).\notag
\end{align}
Putting together  \eqref{uni-v-0}, \eqref{uni-v} and \eqref{ff-1}, the result in \eqref{xi-const} follows.
\medskip

\end{document}